\theoremstyle{plain}\newtheorem{definition}{Definition}[section]
\theoremstyle{definition}\newtheorem{theorem}{Theorem}[section]
\theoremstyle{plain}\newtheorem{lemma}[theorem]{Lemma}
\theoremstyle{plain}\newtheorem{coro}[theorem]{Corollary}
\theoremstyle{plain}\newtheorem{prop}[theorem]{Proposition}
\theoremstyle{remark}\newtheorem{remark}{Remark}[section]
\newcommand{\wred}[1]{\textcolor{black}{#1}}
\newcommand{\Div}{\mathrm{div}\,}
\newcommand{\B}{\Big}
\newcommand{\R}{\mathbb{R}}
\newcommand{\be}{\begin{equation}}
\newcommand{\ee}{\end{equation}}
 \newcommand{\ba}{\begin{aligned}}
 \newcommand{\ea}{\end{aligned}}
\newcommand{\ffint}{\iint_{Q(r)}\!\!\!\!\!\!\!\!\!\!\!\!\!\!\!\!\!\!\text{---\hspace{-0.03cm}--\,}}
\newcommand{\ffgint}{\iint_{Q(r),\sigma } \!\!\!\!\!\!\!\!\!\!\!\!\!\!\!\!\!\!\!\!\!\text{---\hspace{-0.06cm}--~~}}
\newcommand{\ffgintr}{\iint_{Q(r)} \!\!\! \!\!\!\!\!\!\!\!\!\!\!\!\!\!\text{---\hspace{-0.07cm}--}~}
\newcommand{\ffgintrtietarou}{\iint_{\tilde{Q}(\varrho)} \!\!\! \!\!\!\!\!\!\!\!\!\!\!\!\!\!\text{---\hspace{-0.07cm}--}~}
\newcommand{\ffgintrv}{\iint_{ \vartheta_{1}  r} \!\!\!\!\!\!\!\!\!\!\!\!\!\!\!\text{---\hspace{-0.11cm}--}~}
\newcommand{\ffgintkv}{\iint_{\dot{Q}(\vartheta_{3}) } \!\!\!\!\!\!\!\!\!\!\!\!\!\!\!\!\!\!\!\!\text{-----}~}
\newcommand{\fbxo}{\int_{_{\tilde{B}_{k}} }\!\!\!\!\!\!\!\!\!\!-~\,}
  \newcommand{\f}{\frac}
  \newcommand{\ben}{\begin{enumerate}}
   \newcommand{\een}{\end{enumerate}}
\newcommand{\ti}{\nabla}
\newcommand{\Rmnum}[1]{\expandafter\@slowromancap\romannumeral #1@}
\numberwithin{equation}{section}
\begin{document}
\title{Partial regularity of suitable weak solutions of the model arising in amorphous molecular beam epitaxy  }
\author{ \;~Yanqing Wang\footnote{ College of Mathematics and Information Science, Zhengzhou University of Light Industry,  Zhengzhou, Henan  450002,  P. R. China Email: wangyanqing20056@gmail.com} ,  ~ Yike Huang\footnote{ College of Mathematics and Information Science, Zhengzhou University of Light Industry, Zhengzhou, Henan  450002,  P. R. China Email: huang\_yike@outlook.com},\;~~Gang Wu\footnote{Corresponding author, School of Mathematical Sciences,  University of Chinese Academy of Sciences, Beijing 100049, P. R. China Email: wugang2011@ucas.ac.cn}~~~and \;~
Daoguo Zhou\footnote{School of Mathematics and Information Sciences, Henan Polytechnic University, Jiaozuo, Henan 454000, P. R. China Email:
zhoudaoguo@gmail.com }
 }
\date{}
\maketitle
\begin{abstract}
In this paper, we are concerned with the precise relationship between the Hausdorff dimension of possible singular point set $\mathcal{S}$ of suitable weak solutions and the parameter $\alpha$ in the nonlinear term in the following parabolic equation
$$h_t+h_{xxxx}+\partial_{xx}|h_x|^\alpha=f.$$
It is shown that  when $5/3\leq\alpha<7/3$, the $\f{3\alpha-5}{\alpha-1}$-dimensional  parabolic Hausdorff measure of $\mathcal{S}$ is zero, which generalizes the recent
corresponding work of Oz\'anski and  Robinson   in
\cite[SIAM J. Math. Anal. 51: 228--255, 2019]{[OR]}  for $\alpha=2$ and $f=0$. The same result  is valid for a       3D  modified Navier-Stokes system.
  \end{abstract}
\noindent {\bf MSC(2020):}\quad  35K25, 35K55, 76D03, 35Q35, 35Q30 \\\noindent
{\bf Keywords:} Surface growth model;  modified Navier-Stokes equations; partial regularity; Hausdorff dimension \\
\section{Introduction}
\label{intro}
\setcounter{section}{1}\setcounter{equation}{0}
The fourth-order parabolic equation   describing dynamic crystal growth in materials science is given by
\begin{equation}\label{gsg}
h_t+h_{xxxx}+\partial_{xx}|h_x|^\alpha =f,\quad   \alpha>1.
\end{equation}
Here,   $h$   represent the height of a crystalline layer.
The  surface growth model \eqref{gsg}  plays an important role in molecular-beam-epitaxy (MBE) process and its  physical background  can be found in \cite{[K],[SGG],[BBCGH],[FV],[SW]}.

Considering the
diffusion term $h_{xx}$ due to evaporation-condensation on the left hand side of \eqref{gsg}, Stein and  Winkler \cite{[SW]} constructed the global mild solution with $1<\alpha\leq5/3$ and established the global weak solutions with $5/3<\alpha<10/3$.
 As a special case of equation \eqref{gsg} with $\alpha=2$ and $f=0$,
 \be\label{ckpz}
h_t+h_{xxxx}+\partial_{xx}|h_x|^2=0,
\ee
this equation is known as
the   conserved   Kardar-Parisi-Zhang (Kuramoto-Sivashinsky)  equation.
Recently,
 starting from the work of  Bl\"omer and  Romito   \cite{[BR2009]},
 the mathematical study of equation \eqref{ckpz}
attracts a lot of attention (see, e.g., \cite{[BR2012],[BOS],[O],[OR],[CY]} and  references therein),    since its shares similar features to
the 3D   Navier-Stokes equations.
A celebrated result of the 3D Navier-Stokes system
is that 1-dimensional Hausdorff measure of
   singular set of  its suitable weak solutions    is
   zero. This is so-called Caffarelli-Kohn-Nirenberg theorem \cite{[CKN]}. Notice that  most recent  generalized Caffarelli-Kohn-Nirenberg theorem  \cite{[TY],[RWW],[CDM],[Chen]} mainly   interpret how     the
fractional dissipation
  $(-\Delta)^{\alpha}$ affects
the regularity of suitable
weak solutions  in the 3D Navier-Stokes equations. It seems that  there are few works  involving
how  the   nonlinear terms  affects
the regularity of suitable weak solutions in the Navier-Stokes equations.

We switch  our attention  to the surface growth model.
    In \cite{[OR]}, Oz\'anski and   Robinson first studied the
    partial regularity of suitable weak solution  and successfully
     extended  Caffarelli-Kohn-Nirenberg   theorem to equation  \eqref{ckpz} via the following $\varepsilon$-regularity criterion  involving  dimensionless quantity
\be\label{rc2}
\limsup_{\varrho\rightarrow0}\f{1}{\varrho}\int_{t -\varrho^{4}}^{t +\varrho^{4}} \int^{x +\varrho}_{x -\varrho} | h_{yy}|^{2}dyd\tau \leq\varepsilon,
\ee
 which means that  $h$ is H\"older continuous at point $(x_{0},t_{0})$.
Based on this, it is shown that 1-dimensional Hausdorff measure of  the set of potential singular points
 is zero. Here, a point is said to be a regular point to \eqref{ckpz}  if $h$ is
 H\"older continuous    in some neighborhood
of this point. The rest points will be called singular points and denotes by $\mathcal{S}$.
To this end, they applied the blow-up technology developed by Lin  \cite{[Lin]} and Ladyzenskaja and   Seregin \cite{[LS]}  to the suitable
weak solutions of equation \eqref{ckpz} to establish the $\varepsilon$-regularity criterion   below
\be\label{rc1}
\f{1}{\varrho }\int_{t -\varrho^{4}}^{t +\varrho^{4}} \int^{x +\varrho}_{x -\varrho}| h_y|^{3}dyd\tau \leq\varepsilon.
\ee
Subsequently, the higher regularity of suitable weak solutions under
the $\varepsilon$-regularity  criterion \eqref{rc1} was  obtained by Burczak,  Oza\'nski and  Seregin \cite{[BOS]}. The generalization of
$\varepsilon$-regularity criteria \eqref{rc1} was considered  by Choi and Yang in \cite{[CY]}.

 Inspired by  the  works  \cite{[OR],[SW],[TY],[RWW],[CDM],[Chen]},
we consider the partial regularity in equation \eqref{gsg} to reveal   how  the   nonlinear terms
  affects
the regularity of suitable
weak solutions in equation \eqref{gsg}. We formulate our theorems as follows.%


\begin{theorem}\label{the1.1}
 Suppose that $h$ is a suitable weak solution to (\ref{gsg}) with $1<\alpha<{7}/{3}$ and force $f$  belonging to  Morrey spaces $\mathcal{M}^{m,\f{\alpha+1}{\alpha}}(Q(1))$, where $m\geq\f{\alpha+1}{\alpha}$. There exist an absolute constant $\varepsilon_{01}$ such that if $h$ satisfy
$$\ba \iint_{Q(1)}|h_y|^{\alpha+1}dyd\tau\leq \varepsilon_{01},  \ea$$
 then  $h$ is H\"older continuous in  $[x-\f{1}{2 },x+\f{1}{2 } ]\times [t-\f{1}{2^{4}},t+\f{1}{2^{4}} ] $ .
\end{theorem}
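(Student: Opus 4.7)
I would follow the blow-up / compactness approach of Lin and Ladyzhenskaja-Seregin, as adapted by Oża\'nski-Robinson \cite{[OR]} for the case $\alpha=2$, and establish an iteration lemma from which Theorem~\ref{the1.1} follows. The iteration lemma asserts the existence of $\theta\in(0,1/2)$ such that if a suitable weak solution satisfies the smallness hypothesis on $Q(r)$, then the corresponding dimensionless quantity contracts by at least a factor $1/2$ on $Q(\theta r)$, modulo a controlled contribution from the Morrey norm of $f$. Iterating this lemma along dyadic sub-cylinders centered at each point of $[x-\tfrac12, x+\tfrac12]\times[t-\tfrac{1}{16}, t+\tfrac{1}{16}]$ yields a Morrey-Campanato decay $\fint_{Q(x_0,t_0;\rho)}|h - h_{Q}|^2\,dy\,d\tau\lesssim \rho^{2\gamma}$ for some $\gamma>0$, which by the Campanato characterization of H\"older spaces gives H\"older continuity of $h$ on the target cylinder.

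\textbf{Energy inequality and absorption.} The analytic input is a local energy inequality for suitable weak solutions, obtained by testing \eqref{gsg} against $h\phi^2$ with a smooth parabolic cutoff $\phi$ supported in $Q(r)$:
\[
\sup_\tau \int (h\phi)^2\,dy + \iint |h_{yy}|^2\phi^2\,dy\,d\tau \leq C\iint h^2\bigl(|(\phi^2)_\tau| + |(\phi^2)_{yyyy}|\bigr) + C\iint |h_y|^\alpha\bigl|\partial_{yy}(h\phi^2)\bigr| + C\iint f h\phi^2.
\]
The delicate term is the nonlinear one. Via H\"older's inequality and the one-dimensional Gagliardo-Nirenberg interpolation $\|h_y\|_{L^{\alpha+1}}\lesssim \|h_{yy}\|_{L^2}^{\theta_0}\|h\|_{L^q}^{1-\theta_0}$, combined with the smallness assumption $\iint_{Q(1)}|h_y|^{\alpha+1}\le\varepsilon_{01}$, the nonlinear contribution can be absorbed into the $|h_{yy}|^2$ term on the left-hand side. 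The forcing term is handled using the hypothesis $f\in\mathcal{M}^{m,(\alpha+1)/\alpha}$ together with Poincar\'e-type estimates.

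\textbf{Blow-up argument.} For the iteration lemma, I argue by contradiction. Assume there are sequences $(h^{(n)}, f^{(n)})$ of suitable weak solutions on $Q(1)$ with $\varepsilon_n:=\iint_{Q(1)}|h^{(n)}_y|^{\alpha+1}\to 0$ and compatible forcings, yet the scale-$\theta$ improvement fails for each $n$. Rescale by
\[
v^{(n)}(y,\tau):=\varepsilon_n^{-1/(\alpha+1)}\bigl(h^{(n)}(y,\tau)-\overline{h^{(n)}}\bigr),
\]
normalizing $\iint_{Q(1)}|v^{(n)}_y|^{\alpha+1}=1$, so that $v^{(n)}$ satisfies
\[
v^{(n)}_\tau + v^{(n)}_{yyyy} + \varepsilon_n^{(\alpha-1)/(\alpha+1)}\partial_{yy}|v^{(n)}_y|^{\alpha} = \varepsilon_n^{-1/(\alpha+1)}f^{(n)},
\]
whose nonlinear coefficient tends to zero because $\alpha>1$. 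The energy inequality from the previous step produces uniform bounds in $L^\infty_\tau L^2_y\cap L^2_\tau H^2_y$; Aubin-Lions extracts a subsequential limit $v$ that solves the linear biharmonic heat equation $v_\tau + v_{yyyy}=0$ on $Q(1/2)$. Classical smoothing for this equation gives $\iint_{Q(\theta)}|v_y|^{\alpha+1}\leq C\theta^{k}$ for some $k>0$; choosing $\theta$ small and then $n$ large contradicts the assumed failure of decay, closing the iteration lemma.

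\textbf{Main obstacle.} The crux is the compactness step: to conclude that the rescaled nonlinear term vanishes in the limit, one must prove strong convergence of $v^{(n)}_y$ in $L^\alpha_{\mathrm{loc}}$ and justify the weak formulation of the limit equation. This relies on the one-dimensional Sobolev embedding $H^2\hookrightarrow C^{1,1/2}$ for spatial regularity, combined with time regularity extracted from the equation and the Morrey hypothesis on $f$. It is precisely here that the upper bound $\alpha<7/3$ enters, since beyond this threshold the relevant Gagliardo-Nirenberg exponents exceed the admissible range and the nonlinearity can no longer be treated as a perturbation of the biharmonic heat flow. Once this compactness issue is resolved, the iteration closes and Theorem~\ref{the1.1} follows.
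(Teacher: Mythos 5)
Your proposal follows essentially the same route as the paper: a contradiction/blow-up argument in which the rescaled solutions carry a nonlinear coefficient $\varepsilon_k^{\alpha-1}\to 0$, uniform $L^\infty L^2\cap L^2H^2$ bounds plus Aubin--Lions give strong convergence of the rescaled gradients in $L^{\alpha+1}$ precisely for $\alpha<7/3$ (via the uniform $L^{10/3}$ bound), the limit solves the biharmonic heat equation whose interior regularity yields the decay, and iteration plus the Campanato characterization gives H\"older continuity. The one ingredient you gloss over is the parabolic Poincar\'e inequality for weak solutions (the paper's Lemma~\ref{parapoincare}), which is what lets one control the mean-subtracted rescaled functions in $L^{\alpha+1}$ from the normalized gradient alone; the rest of your outline matches the paper's Lemma~\ref{lem3.1} and Proposition~\ref{keyprop}.
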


\begin{theorem}\label{the1.2}
Assume that $h$ is a suitable weak solution  to (\ref{gsg}) with $1<\alpha< {7}/{3}$ and force $f$  is in Morrey spaces  $\mathcal{M}^{m,\f{\alpha+1}{\alpha}}(Q(1))$  with $m\geq\f{\alpha+1}{\alpha}$.  There is a universal constant $\varepsilon_{02}$ such that $h $ is regular  at point $(x,t)$ if
\be\label{con2}
 \limsup_{r\rightarrow
0}  \frac{1}{r^{\f{3\alpha-5}{\alpha-1}}}\iint_{Q( r)}|h_{yy}|^2dyd\tau\leq \varepsilon_{02}.
\ee
\end{theorem}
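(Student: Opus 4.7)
The plan is to reduce Theorem \ref{the1.2} to Theorem \ref{the1.1} by a scale-invariant iteration argument in the spirit of Caffarelli--Kohn--Nirenberg. Equation \eqref{gsg} is preserved by the natural parabolic rescaling $\tilde h(y,\tau)=\rho^{(2-\alpha)/(\alpha-1)}h(x+\rho y,\,t+\rho^{4}\tau)$ (together with a compatible rescaling of the force), and a direct change of variables identifies the Morrey exponent $(3\alpha-5)/(\alpha-1)$ appearing in \eqref{con2} as the scaling dimension of $\iint|h_{yy}|^{2}$, while the hypothesis of Theorem \ref{the1.1} for the rescaled $\tilde h$ on $Q(1)$ is equivalent to the smallness $\iint_{Q(\rho)}|h_{x}|^{\alpha+1}\le\varepsilon_{01}\rho^{(4\alpha-6)/(\alpha-1)}$. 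The task therefore reduces to producing this smallness at some scale.

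I would introduce the dimensionless functionals
\[
\mathcal{A}(\rho)=\rho^{(3-\alpha)/(\alpha-1)}\!\!\sup_{|s-t|<\rho^{4}}\!\int_{I_{\rho}}\!|h_{x}|^{2},\quad
\mathcal{B}(\rho)=\rho^{-(3\alpha-5)/(\alpha-1)}\!\iint_{Q(\rho)}\!|h_{xx}|^{2},\quad
\mathcal{C}(\rho)=\rho^{-(4\alpha-6)/(\alpha-1)}\!\iint_{Q(\rho)}\!|h_{x}|^{\alpha+1},
\]
and then derive two complementary inequalities. The first is a scale-invariant Gagliardo--Nirenberg bound
\[
\mathcal{C}(\rho)\le C\,\mathcal{A}(\rho)^{(\alpha+3)/4}\mathcal{B}(\rho)^{(\alpha-1)/4}+C\,\mathcal{A}(\rho)^{(\alpha+1)/2},
\]
obtained by applying the 1D inequality $\|u\|_{L^{\alpha+1}}^{\alpha+1}\le C\|u'\|_{L^{2}}^{(\alpha-1)/2}\|u\|_{L^{2}}^{(\alpha+3)/2}+C|I|^{(1-\alpha)/2}\|u\|_{L^{2}}^{\alpha+1}$ slicewise in time to $u=h_{x}(\cdot,s)$ and then H\"older-integrating with the conjugate pair $(4/(\alpha-1),4/(5-\alpha))$. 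The second is a Caccioppoli-type bound $\mathcal{A}(\rho)+\mathcal{B}(\rho)\le C(\mathcal{C}(\theta\rho)^{\gamma_{1}}+\mathcal{F}(\theta\rho))$ for some $\gamma_{1}\in(0,1)$, a dilation $\theta>1$, and a scale-invariant force functional $\mathcal{F}$; this I would obtain from the local energy inequality of the suitable weak solution by testing \eqref{gsg} against $\phi^{4}h$ with standard parabolic cutoffs $\phi$ on the enlarged cylinder $Q(\theta\rho)$.

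Chaining these estimates and using the hypothesis $\mathcal{B}(\rho)\le 2\varepsilon_{02}$ for all small $\rho$ yields an iterative inequality $\mathcal{C}(\rho)\le\eta(\varepsilon_{02})\,\mathcal{C}(\theta\rho)^{\gamma_{2}}+o(1)$, which for $\varepsilon_{02}$ small drives $\mathcal{C}(\rho_{k})$ below $\varepsilon_{01}$ along the dyadic sequence $\rho_{k}=\theta^{-k}\rho_{0}$; Theorem \ref{the1.1} applied to the rescaled $\tilde h$ at that scale then gives H\"older continuity of $\tilde h$ on $Q(1/2)$, which undoes to the desired regularity of $h$ at $(x,t)$. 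The main obstacle is the careful execution of the Caccioppoli step for the fourth-order operator with the nonlinearity $\partial_{xx}|h_{x}|^{\alpha}$ and the absorption of the mixed cutoff terms via Young's inequality; the resulting exponent balance dictates the restriction $\alpha<7/3$, which is precisely the range in which the H\"older conjugate exponent $2(\alpha+3)/(5-\alpha)$ stays below $4$---matching the integrability $L^{4}_{t}H^{1}_{x}$ afforded by the embedding $L^{\infty}_{t}L^{2}_{x}\cap L^{2}_{t}H^{2}_{x}\hookrightarrow L^{4}_{t}H^{1}_{x}$ inherited from the suitable weak solution framework.
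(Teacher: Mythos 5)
Your overall architecture is the same as the paper's: combine a local-energy (Caccioppoli) estimate with a scale-invariant Gagliardo--Nirenberg interpolation, use the hypothesis $\mathcal{B}(\rho)=E(\rho)\le\varepsilon_{02}$ to close an iteration that drives $E_{\alpha+1}(\rho)$ below $\varepsilon_{01}$, and then invoke Theorem \ref{the1.1} after rescaling. However, there is a genuine gap in your choice of the energy functional $\mathcal{A}(\rho)=\rho^{(3-\alpha)/(\alpha-1)}\sup_{s}\int_{I_\rho}|h_x|^2$. The local energy inequality available to a suitable weak solution (item (3) of Definition \ref{defi}, obtained by testing with $\phi h$) controls only $\sup_s\int|h|^2\phi$ and $\iint|h_{xx}|^2\phi$; it gives no bound on $\sup_s\int|h_x|^2$. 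A first-order Caccioppoli inequality (testing against $\partial_x(\phi^2h_x)$ or the like) is not part of the suitable-weak-solution framework and cannot be manufactured from \eqref{loc}, and the pointwise-in-time interpolation $\int|h_x|^2\le\|h\|_{L^2}\|h_{xx}\|_{L^2}$ does not rescue you because $\|h_{xx}(s)\|_{L^2}$ is only square-integrable in time, not bounded. Your closing remark actually exposes the problem: the embedding inherited from $L^\infty_tL^2_x\cap L^2_tH^2_x$ is $L^4_tH^1_x$, not $L^\infty_tH^1_x$, so the supremum in $\mathcal{A}$ is exactly the quantity you do not have. A second, related omission: any local energy inequality for \eqref{gsg} unavoidably involves $h$ undifferentiated (through $(\phi_t-\phi_{xxxx})|h|^2$, $|h_x|^{\alpha}h\phi_{xx}$ and $fh\phi$), while the hypothesis \eqref{con2} controls only $h_{yy}$; one must therefore replace $h$ by $h-h_{_{Q(r)}}$ and control the resulting mean-free quantities by a \emph{parabolic} Poincar\'e inequality, which requires using the equation itself to bound the time variation of spatial averages. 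Your proposal has no counterpart to this step.

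The paper's proof repairs precisely these two points: it works with $\tilde{E}_{\ast}(r)=\sup_s r^{-(5-3\alpha)/(1-\alpha)}\int|h-(h)_r|^2$ (which \emph{is} controlled by \eqref{loc} applied to $h-h_{_{Q(r)}}$), uses the fractional interpolation \eqref{inter1}, namely $\|h_y\|_{L^{\alpha+1}}\lesssim r^{(7-3\alpha)/(2(\alpha+1))}\|h\|_{L^\infty L^2}^{(\alpha+3)/(4(\alpha+1))}\|h_{yy}\|_{L^2}^{(3\alpha+1)/(4(\alpha+1))}$ in place of your slicewise bound on $h_x$ (this is where the periodicity of the torus and the restriction $(3\alpha+1)/4\le 2$, i.e.\ $\alpha\le 7/3$, enter), and controls the mean-subtracted quantity $\tilde{D}_{\alpha+1}$ via the parabolic Poincar\'e inequality of Lemma \ref{parapoincare}. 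If you replace your $\mathcal{A}$ by either $\tilde E_\ast$ or by the (available) $L^4_tL^2_x$ norm of $h_x$ and add the Poincar\'e step, your argument essentially becomes the paper's.
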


\begin{remark}
From \eqref{con2}, one immediately yields the regularity of suitable weak solutions of equation \eqref{gsg} with $1<\alpha\leq 5/3$, which is  consistent with  Stein and  Winkler's work \cite{[SW]}.
\end{remark}
The Vitali cover  lemma allows us to estimate the Hausdorff
dimensional of the singular points set of equation \eqref{gsg}.
\begin{coro}\label{coro}
Let $5/3\leq\alpha<7/3$ and $f$ be in Morrey space $\mathcal{M}^{m,\f{\alpha+1}{\alpha}}$  with $m\geq\f{\alpha+1}{\alpha}$.
$\f{3\alpha-5}{\alpha-1}$-dimensional Hausdorff measure of  the set of potential singular points of suitable weak solutions in \eqref{gsg}
 is zero.
\end{coro}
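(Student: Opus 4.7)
The plan is to combine the $\varepsilon$-regularity criterion of Theorem~\ref{the1.2} with the parabolic Vitali covering lemma, in the spirit of the Caffarelli-Kohn-Nirenberg argument for the 3D Navier-Stokes equations. Set $d := \f{3\alpha - 5}{\alpha - 1}$, which is nonnegative for $5/3 \leq \alpha < 7/3$ and strictly smaller than the parabolic dimension $5 = 1 + 4$ (since $5 - d = 2\alpha/(\alpha-1) > 0$ for $\alpha > 1$). The contrapositive of Theorem~\ref{the1.2} tells us that every $(x_0,t_0) \in \mathcal{S}$ satisfies
\be
\limsup_{r \to 0} \f{1}{r^{d}} \iint_{Q(x_0, t_0; r)} |h_{yy}|^{2} \, dy d\tau > \varepsilon_{02}.
\ee

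Fix $\delta > 0$. I would assign to each singular point a parabolic cylinder $Q(x, t; r(x,t))$ of radius $r(x,t) < \delta$ on which $\iint_{Q(r)} |h_{yy}|^2 \, dy d\tau \geq \varepsilon_{02}\, r^d$, and then invoke the parabolic Vitali covering lemma to extract a countable pairwise disjoint subfamily $\{Q_i = Q(x_i, t_i; r_i)\}$ whose fivefold enlargements $\{Q(x_i, t_i; 5 r_i)\}$ still cover $\mathcal{S}$. Disjointness gives the key estimate
\be
\sum_i r_i^{d} \leq \varepsilon_{02}^{-1} \sum_i \iint_{Q_i} |h_{yy}|^2 \, dy d\tau \leq \varepsilon_{02}^{-1} \iint_{\bigcup_i Q_i} |h_{yy}|^2 \, dy d\tau.
\ee
To close, I would first observe that $\mathcal{S}$ has Lebesgue measure zero: since $|h_{yy}|^2 \in L^{1}_{\mathrm{loc}}(Q(1))$ and $|Q(r)| \sim r^{5}$, Lebesgue differentiation yields $r^{-d} \iint_{Q(r)} |h_{yy}|^2 \sim r^{5-d} |h_{yy}(x,t)|^2 \to 0$ at almost every point, so a.e.\ point of $Q(1)$ is regular by the contrapositive above. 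Consequently $\bigcup_i Q_i$ lies in the parabolic $\delta$-neighborhood of $\mathcal{S}$, whose Lebesgue measure tends to $0$ with $\delta$, and absolute continuity of the Lebesgue integral forces $\iint_{\bigcup_i Q_i}|h_{yy}|^2 \, dy d\tau \to 0$. Since $\{Q(x_i, t_i; 5 r_i)\}$ is an admissible cover of $\mathcal{S}$ by cylinders of parabolic radius at most $5\delta$, passing to the limit $\delta \to 0$ gives $\mathcal{P}^{d}(\mathcal{S}) = 0$.

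Everything after Theorem~\ref{the1.2} is essentially routine; the only substantive point is the absolute-continuity passage, which rests precisely on the strict inequality $d < 5$, i.e.\ on the fact that the critical scaling of $|h_{yy}|^2$ is subcritical with respect to parabolic volume. As this inequality is automatic in the full range $\alpha > 1$, I do not anticipate any genuine obstacle beyond careful bookkeeping of the parabolic Vitali step and the modulus-of-continuity argument.
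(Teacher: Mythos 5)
Your argument is correct and is exactly the Vitali-covering / Caffarelli--Kohn--Nirenberg scheme that the paper invokes (the paper only remarks that ``the Vitali cover lemma allows us to estimate the Hausdorff dimension'' and does not write out more detail): contrapositive of Theorem~\ref{the1.2}, disjoint subcover, $\sum_i r_i^d \le \varepsilon_{02}^{-1}\iint_{\cup Q_i}|h_{yy}|^2$, and absolute continuity of the integral using $|\mathcal{S}|=0$. The only point worth making explicit is that the set of regular points is open, so $\mathcal{S}$ is closed and the Lebesgue measure of its parabolic $\delta$-neighborhood indeed tends to $|\mathcal{S}|=0$; with that remark the proof is complete and agrees with the paper's intended approach.
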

\begin{remark}
Further discussion involving partial regularity of suitable weak solution of equations \eqref{gsg} and \eqref{ckpz}
can be found in Section \ref{section5}.
\end{remark}
Compared with the Oz\'anski and   Robinson's work \cite{[OR]}, the general nonlinear term and no-zero force are considered in above results. The precise relationship between the Hausdorff dimension of possible singular point set $\mathcal{S}$ and the parameter $\alpha$ in the fourth-order  nonlinear term in the  parabolic equation is presented in Corollary \ref{coro}.
It is worth remarking that the  extension of  Poincar\'e inequality of weak  solutions to the  parabolic equation is established. To the knowledge of authors, this type of Poincar\'e inequality of weak  solutions of the  parabolic equation (system) appears frequently  and plays an important role in
partial regularity for  parabolic systems by $\mathcal {A} $-caloric approximation (see \cite{[Scheven],[DMS],[DM],[Struwe],[BF]} and references).  Three kinds of different approaches to prove this type inequality
 are provided. Let $d$ denote the dimension  of time
 direction and  is determined by the parabolic equation (system).
 The first one is partially motivated by the proof of Poincar\'e inequality \cite[Lemma 3.1, p. 458]{[Krylov]} by Krylov
via applying the classical Poincar\'e inequality in time direction. Indeed, following the path of \cite{[Krylov]}, one uses the Poincar\'e inequality  in  time direction to conclude that
$$
\int^{t+r^{d}}_{t-r^{d}}\B|\fbxo u(t)dy -\ffgint u(t)dydt\B|^{p} ds\leq \int^{t+r^{d}}_{t-r^{d}}\B|\fbxo u_{t}(t)dy\B|^{p}d\tau, 1<p<\infty,
 $$
where
$$
\ffgint u(t)dydt= \int^{t+r^{d}}_{t-r^{d}}\fbxo u(t)dyd\tau\B/ \int^{t+r^{d}}_{t-r^{d}} d\tau.
$$
Here, the new ingredient is an  application of the following Poincar\'e-Wirtinger's inequality (see \cite[p. 233]{[Brezis]})
 $$
 \B|\fbxo u(t)dy -\ffgint u(t)dydt\B| \leq C\int^{t+r^{d}}_{t-r^{d}}\B|\fbxo u_{t}(t)dy\B|  d\tau.
 $$
which allows us to slightly improve  the  corresponding Poincar\'e inequality in
\cite{[Krylov]}. It is worth pointing out that an alternative proof of the results in \cite{[Aramaki]} can also be given,  which is of independent interest.
   The second one is modification of that in \cite{[OR]}. The last one originates from   aforementioned works involving  $\mathcal {A} $-caloric approximation \cite{[Scheven],[DMS],[DM],[Struwe],[BF]}.

Inspired by the   equation \eqref{gsg}, we introduce the following modified Navier-Stokes equations
\be\left\{\ba\label{mns}
&{u_{i}}_{t} -\Delta  u_{i}+ u\cdot\ti
u^{\alpha-1}_{i}   +\partial_{i} \Pi= 0, i=1,2,3, \alpha>1,\\
&\Div u =0,\\
&u|_{t=0}= u_0.
\ea\right.\ee
Here,
as the standard Navier-Stokes equations, the system \eqref{mns}  also shares    the cancellation of the nonlinear term in
energy space $L^{2}$, that is,
$$
\int_{\Omega} u_{j}\partial_{j}  u^{\alpha-1}_{i} u_{i}dx=\f{\alpha-1}{\alpha}\int_{\Omega} u_{j}\partial_{j}  u^{\alpha}_{i}  dx=\f{\alpha-1}{\alpha}\int_{\Omega} \partial_{j}(u_{j}  u^{\alpha}_{i})dx =0.
$$
Hence, for the regular solutions of  the   modified Navier-Stokes equations  \eqref{mns}, there holds
the energy equality
  $$
 \|u(T)\|_{L^{2}(\Omega)}^{2}+2 \int_{0}^{T}\|\nabla u\|_{L^{2}(\Omega)}^{2}ds= \|u_0\|_{L^{2}(\Omega)}^{2}.
 $$
The Leary-Hopf type weak solutions of equations \eqref{mns}   can be proved by Galerkin approximation. The next objective is to show how the   nonlinear terms  affects
the regularity of suitable weak solutions in the  modified  Navier-Stokes equations \eqref{mns} .
\begin{theorem}\label{the1.1ns}
Suppose that the pair  $(u, \,\Pi)$ is a suitable weak solution to the modified Navier-Stokes equations   (\ref{mns}) with $1<\alpha< 7/3$.   Then $|u|$  can be bounded by $1$ on $[-\f{1}{8^{2 }},0]\times B(\f{1}{8})$  provided the following condition holds,
\be\label{condns}\ba
\B (\iint_{Q(1)} |u|^{\alpha+1}dydt\B)^{\f{1}{\alpha+1}}
+\B(\iint_{Q (1)}
|\Pi |^{\f{\alpha+1}{\alpha}}dydt \B)^{\f{\alpha}{\alpha+1}}  \leq    \varepsilon_{1}, \ea\ee
for an absolute constant $\varepsilon_{1}>0.$
\end{theorem}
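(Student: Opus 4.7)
The plan is to prove the $L^{\infty}$-boundedness of $|u|$ via a Caccioppoli-type iteration on shrinking parabolic cubes, in the spirit of the Caffarelli--Kohn--Nirenberg argument, adapted to the modified system \eqref{mns}. The cancellation identity already displayed in the excerpt, $\int u_{j}\partial_{j}u^{\alpha-1}_{i}\,u_{i}\,dx=0$, will play the role of the usual convective cancellation and, together with $\Div u=0$, yields a clean local energy inequality on parabolic cubes $Q(r)$. The natural scaling puts $u$ at the order $\lambda^{1/(\alpha-1)}$ and $\Pi$ at the order $\lambda^{\alpha/(\alpha-1)}$, which is precisely why the $L^{\alpha+1}$ control on $u$ is balanced by the $L^{(\alpha+1)/\alpha}$ control on $\Pi$ in the Hölder duality $1/(\alpha+1)+\alpha/(\alpha+1)=1$.

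First I would derive a local energy inequality: testing the momentum equation against $u\phi^{2}$ for a standard parabolic cutoff $\phi\in C_{c}^{\infty}(Q(r))$, the pointwise identity $u_{j}\partial_{j}u^{\alpha-1}_{i}u_{i}=\f{\alpha-1}{\alpha}\partial_{j}(u_{j}u^{\alpha}_{i})$ and integration by parts convert the convective term into a boundary expression of the form $\iint |u|^{\alpha+1}|\ti \phi^{2}|$, while the pressure term reduces to $\iint \Pi\,u\cdot \ti \phi^{2}$; both are controlled by the smallness hypothesis.

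Second, I would iterate the energy inequality on a nested sequence of parabolic cubes $Q(r_{k})$, $r_{k}=\theta^{k}$ for a small fixed $\theta\in(0,1)$, using the dimensionless quantities
\[
A_{k}:=\f{1}{r_{k}^{\beta_{1}}}\sup_{t\in I_{k}}\!\int_{B(r_{k})}\!|u|^{2}+\f{1}{r_{k}^{\beta_{2}}}\!\iint_{Q(r_{k})}\!|\ti u|^{2},\qquad B_{k}:=\f{1}{r_{k}^{\beta_{3}}}\!\iint_{Q(r_{k})}\!|\Pi|^{\f{\alpha+1}{\alpha}},
\]
where the $\beta_{i}$ are read off from the scaling of \eqref{mns}. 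The pressure is split as $\Pi=\Pi_{h}+\Pi_{nh}$, with $\Pi_{h}$ harmonic in space (so controlled in $L^{\infty}$ by its mean) and $\Pi_{nh}$ obtained as the local Calder\'on--Zygmund representation of $-\partial_{i}\partial_{j}(u_{j}u^{\alpha-1}_{i})$; the latter is estimated in terms of $A_{k}$ by the Poincar\'e-type inequality highlighted in the introduction (whose alternative proofs via the Poincar\'e--Wirtinger trick in time are a central contribution of the paper). Combining parabolic Sobolev--Poincar\'e interpolation of $A_{k}$ with these pressure bounds in the energy inequality on $Q(r_{k+1})$ yields a geometric recurrence of the shape $A_{k+1}+B_{k+1}\leq C(\theta)(A_{k}+B_{k})^{1+\delta}$ for some $\delta>0$, provided $\alpha<7/3$. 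Choosing $\theta$ and $\varepsilon_{1}$ sufficiently small so that $A_{0}+B_{0}$ lies in the basin of attraction of $0$, the recurrence drives $A_{k}+B_{k}\to 0$ geometrically, and a standard parabolic Moser or Serrin-type bootstrap on the resulting subcritical Morrey information upgrades this to the pointwise bound $|u|\leq 1$ on $Q(1/8)$.

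The main obstacle is the coupled treatment of the fractional nonlinearity $u\cdot \ti u^{\alpha-1}$ and the non-local pressure for non-integer $\alpha$: termwise chain-rule manipulations are delicate near $\{u=0\}$, and I would either approximate $u^{\alpha-1}_{i}$ by $|u_{i}|^{\alpha-2}u_{i}$ with a truncation at small scales or apply the extension of the Poincar\'e inequality emphasized in the introduction so as to avoid differentiating $u^{\alpha-1}$ directly. The threshold $\alpha<7/3$ is precisely the endpoint beyond which the parabolic Sobolev embedding fails to deliver enough integrability to absorb the $|u|^{\alpha-1}$ factor in the Calder\'on--Zygmund estimate of $\Pi_{nh}$; this is the step where Young's inequality ceases to close and is what forces the range stated in the theorem.
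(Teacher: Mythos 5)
Your proposal diverges from the paper's proof in its core mechanism, and the divergence is not innocuous: the paper explicitly observes (end of Section 1, and again at the start of Section 4.2) that the Caffarelli--Kohn--Nirenberg inductive scheme you propose closes for the system \eqref{mns} only when $\alpha\leq 2$, and that the De Giorgi/Moser route breaks down for $\alpha\neq 2$. The concrete obstruction sits exactly where you place the most weight, namely the pressure decomposition $\Pi=\Pi_{h}+\Pi_{nh}$. For the genuine Navier--Stokes equations the Calder\'on--Zygmund part is estimated after replacing $u_ju_i$ by $(u_j-c_j)(u_i-c_i)$, and it is this mean-subtraction that produces the superlinear (or contractive) recurrence for $A_k+B_k$ at the natural scaling. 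For the nonlinearity $\partial_i\partial_j\big(u_ju_i^{\alpha-1}\big)$ with non-integer $\alpha-1$ the identity $\Delta\Pi=-\partial_i\partial_j\big((u_j-C)(u_i^{\alpha-1}-C)\big)$ is no longer available (the paper states this verbatim before the proof of Theorem \ref{the1.2ns}), so the recurrence $A_{k+1}+B_{k+1}\leq C(\theta)(A_k+B_k)^{1+\delta}$ you assert is not obtained for $2<\alpha<7/3$. Your closing step is also a genuine gap: ``a standard parabolic Moser or Serrin-type bootstrap'' is precisely the non-standard part here, since the quasilinear term $u\cdot\nabla u_i^{\alpha-1}$ defeats the usual test-function machinery; the paper instead runs an explicit ladder of integrability exponents $p_k\nearrow\infty$ through the fractional integration theorem of O'Leary and Kukavica in parabolic Morrey spaces, which is where the subcritical Morrey exponent $\beta_3>0$ is actually consumed.

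For comparison, the paper's proof of Theorem \ref{the1.1ns} has a different architecture: (i) a one-scale decay lemma proved by a compactness/blow-up contradiction --- rescale $v_k=\varepsilon_{1k}^{-1}u_k$, $\pi_k=\varepsilon_{1k}^{-1}\Pi_k$, extract a limit solving the \emph{linear} Stokes system via Aubin--Lions (this is where $\alpha<7/3$ enters, through the strong $L^{\alpha+1}$ convergence obtained from the uniform $L^{10/3}$ bound), and contradict the assumed failure of decay; (ii) iteration of this lemma to show $\rho^{\f{6-4\alpha}{\alpha-1}}\iint_{Q(\rho)}|u|^{\alpha+1}+\rho^{\f{6-4\alpha}{\alpha-1}}\iint_{Q(\rho)}|\Pi|^{\f{\alpha+1}{\alpha}}\lesssim \rho^{\beta}$ for some $\beta>0$, i.e.\ Morrey regularity strictly better than scaling-invariant; (iii) the Riesz-potential bootstrap described above to reach $u\in L^{q}$ for all $q<\infty$ and then $L^{\infty}$. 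If you wish to salvage your outline, the energy-iteration part can survive for $\alpha\leq 2$, but for the full range you would need to replace step two of your plan by a compactness argument (or otherwise circumvent the mean-subtraction in the pressure) and replace the Moser step by the Morrey-space fractional integration argument.
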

\begin{theorem}\label{the1.2ns}
Suppose that $u$ is a suitable weak solution  to the modified Navier-Stokes equations   \eqref{mns} with $1<\alpha< 7/3$  and for a universal constant {$\varepsilon_{2}>0$},
$$ \limsup_{r\rightarrow
0}\f{1}{r^{\f{5-3\alpha}{1-\alpha}}}\iint_{Q(r)}|\ti u|^{2}dy dt{<\varepsilon_{2}}.
$$
Then  $(0,\,0)$ is a regular point for $u(x,t)$.
\end{theorem}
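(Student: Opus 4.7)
\textbf{Proof proposal for Theorem~\ref{the1.2ns}.}
The plan is to reduce the statement to the $\varepsilon$-regularity criterion Theorem~\ref{the1.1ns}, following the same blueprint as for Theorem~\ref{the1.2}: exhibit a radius $r_0>0$ for which the parabolically rescaled pair on $Q(1)$ satisfies \eqref{condns}, then invoke Theorem~\ref{the1.1ns} to get $|u|$ bounded near the origin. The system \eqref{mns} is invariant under
\[
u_\lambda(y,s)=\lambda^{1/(\alpha-1)}u(\lambda y,\lambda^2 s),\qquad \Pi_\lambda(y,s)=\lambda^{\alpha/(\alpha-1)}\Pi(\lambda y,\lambda^2 s),
\]
and a direct check gives
$\iint_{Q(1)}|\nabla u_r|^2\,dyds=r^{-(5-3\alpha)/(1-\alpha)}\iint_{Q(r)}|\nabla u|^2\,dyds$,
so the limsup hypothesis translates into $\iint_{Q(1)}|\nabla u_r|^2<2\varepsilon_2$ for all $r$ below a threshold.

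Second, I would control the velocity contribution to \eqref{condns}. In three space dimensions the parabolic embedding $L^\infty_tL^2_x\cap L^2_tH^1_x\hookrightarrow L^{10/3}_{t,x}$, together with the fact that $\alpha+1<10/3$ for $\alpha<7/3$ and H\"older's inequality, bounds $\iint_{Q(1)}|u_r|^{\alpha+1}$ by a positive power of $\sup_t\|u_r(t)\|_{L^2}^{2}+\|\nabla u_r\|_{L^2(Q(1))}^{2}$. The $L^\infty_tL^2_x$-piece over a slightly shrunken cylinder is extracted from the local energy inequality built into the notion of suitable weak solution for \eqref{mns}; the new parabolic Poincar\'e inequality advertised in Section~\ref{intro} plays its role here, absorbing the $L^2$ mass of $u_r$ into the gradient term over a dyadic sequence of radii and giving quantitative smallness on the shrunken cylinder.

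Third, I would handle the pressure by the standard elliptic splitting. Taking the divergence of \eqref{mns} yields $-\Delta\Pi_r=\partial_i\partial_j(u_{r,j}u_{r,i}^{\alpha-1})$, and a cut-off decomposes $\Pi_r=\Pi_r^{\mathrm{loc}}+\Pi_r^{\mathrm{har}}$. Calder\'on--Zygmund on the localized right-hand side gives
\[
\|\Pi_r^{\mathrm{loc}}\|_{L^{(\alpha+1)/\alpha}(Q(1))}\le C\,\|u_r\|_{L^{\alpha+1}(Q(1))}^{\alpha},
\]
while the harmonic remainder is controlled by the sub-mean-value property and a decay iteration between scales $\theta r$ and $r$; a fixed-point argument in the radius then bounds $\iint|\Pi_r|^{(\alpha+1)/\alpha}$ by a constant multiple of $\iint|u_r|^{\alpha+1}$ plus terms absorbable at smaller scale.

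Combining the three steps, for $r$ sufficiently small both quantities on the left of \eqref{condns} sit below $\varepsilon_1$; Theorem~\ref{the1.1ns} applied to $(u_r,\Pi_r)$ then yields $|u_r|\le 1$ on $Q(1/8)$, and undoing the rescaling shows that $|u|$ is bounded in a neighborhood of $(0,0)$, so the origin is regular. The main obstacle is the dimensional mismatch between $|\nabla u|^2$ and $|u|^{\alpha+1}$: they do not share a common parabolic scaling, so the passage cannot be done by one clean Poincar\'e--Sobolev step. Overcoming this is precisely where the paper's new parabolic Poincar\'e inequality, used in concert with the local energy inequality, does the decisive work, and the restriction $\alpha<7/3$ is exactly what keeps $\alpha+1$ below the three-dimensional parabolic Sobolev exponent $10/3$.
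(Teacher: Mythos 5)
Your overall skeleton (rescale, verify the one-scale condition \eqref{condns}, invoke Theorem \ref{the1.1ns}) agrees with the paper's, and your pressure treatment is essentially Lemma \ref{presurens}. But there is a genuine gap in the middle step, which is where all the work lies. The hypothesis only gives smallness of the scale-invariant gradient quantity $E(u;r)$; it gives no a priori control of $E_{\ast}(u;r)=\sup_t r^{-\f{5-3\alpha}{1-\alpha}}\int_{B(r)}|u|^2$, and the interpolation that converts gradient control into $L^{\alpha+1}$ control (Lemma \ref{ineqns}) necessarily involves $E_\ast$ raised to the power $\f{5-\alpha}{4}$. You propose to extract the $L^\infty_tL^2_x$ piece ``from the local energy inequality'' with ``quantitative smallness,'' but the right-hand side of that inequality contains exactly $E_{\alpha+1}$ and $P_{\f{\alpha+1}{\alpha}}^{\f{\alpha}{\alpha+1}}E_{\alpha+1}^{\f{1}{\alpha+1}}$ --- the very quantities you are trying to bound. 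The system $E_\ast\lesssim E_{\alpha+1}+P\cdot E_{\alpha+1}^{1/(\alpha+1)}$, $E_{\alpha+1}\lesssim E_\ast^{(5-\alpha)/4}E^{(3\alpha-3)/4}+\dots$, $P\lesssim E_{\alpha+1}+\dots$ is circular unless one sets up a genuine multi-scale iteration. The paper closes it by forming the weighted sum $G(r)=E_{\alpha+1}+\varepsilon^{3/16}(E+E_\ast)+\varepsilon^{1/4}P_{\f{\alpha+1}{\alpha}}$, exploiting $\f{5-\alpha}{4}<1$ and the smallness of $E$ to run a Caffarelli--Kohn--Nirenberg-type absorption across the scales $r,\rho$, first obtaining smallness of $E_{\alpha+1}$ and only then, in a second pass through the local energy inequality, smallness of $E+E_\ast+P$. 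Your sketch contains none of this bookkeeping, and without it the argument does not close.

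A second, related problem: the ``new parabolic Poincar\'e inequality'' you lean on (Lemma \ref{parapoincare}) is proved only for the scalar surface growth equation and uses its specific structure; no analogue for \eqref{mns} is stated or proved in the paper. More importantly, the mean-subtraction device underlying any Poincar\'e-type absorption is unavailable here: as the paper points out at the start of its proof, one cannot replace $u$ by $u-C$ in the nonlinearity because $\partial_i\partial_j(u_ju_i^{\alpha-1})$ is not invariant under such shifts, so the pressure equation for the shifted field is simply not valid. This is precisely why the proof of Theorem \ref{the1.2ns} is structured differently from the classical CKN argument, and why attributing the ``decisive work'' to a Poincar\'e inequality points at the wrong tool.
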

\begin{coro}\label{corons}
$\f{3\alpha-5}{\alpha-1}$-dimensional Hausdorff measure of  the set of potential singular points of suitable weak solutions in the   modified Navier-Stokes equations  \eqref{mns} with $  5/3\leq\alpha< 7/3$
 is zero.
\end{coro}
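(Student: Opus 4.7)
The plan is to combine the $\varepsilon$-regularity criterion of Theorem \ref{the1.2ns} with a Vitali covering argument in the spirit of Caffarelli--Kohn--Nirenberg. Set $d:=\f{3\alpha-5}{\alpha-1}$; for $\alpha\in[5/3,7/3)$ one has $d\in[0,3/2)$, which is strictly less than the parabolic dimension $5$ of the cubes $Q(r)\subset\mathbb{R}^{3}\times\mathbb{R}$.

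First, by contraposition of Theorem \ref{the1.2ns}, every $(x,t)\in\mathcal{S}$ satisfies
$$
\limsup_{r\to 0}\f{1}{r^{d}}\iint_{Q(x,t;r)}|\ti u|^{2}\,dy\,d\tau\;\geq\;\varepsilon_{2}.
$$
I would fix $\delta>0$ and, at each $(x,t)\in\mathcal{S}$, choose a parabolic cube of radius $r_{x,t}<\delta$ with
$$
\iint_{Q(x,t;r_{x,t})}|\ti u|^{2}\,dy\,d\tau\;\geq\;\tfrac{1}{2}\varepsilon_{2}\,r_{x,t}^{d}.
$$
A parabolic Vitali covering lemma, respecting the anisotropic scaling $r$ in space and $r^{2}$ in time, then produces a countable disjoint subfamily $\{Q(x_{i},t_{i};r_{i})\}$ whose $5$-dilates still cover $\mathcal{S}$, whence
$$
\sum_{i}(5r_{i})^{d}\;\leq\;\f{2\cdot 5^{d}}{\varepsilon_{2}}\sum_{i}\iint_{Q(x_{i},t_{i};r_{i})}|\ti u|^{2}\,dy\,d\tau\;\leq\;\f{2\cdot 5^{d}}{\varepsilon_{2}}\iint_{V_{\delta}}|\ti u|^{2}\,dy\,d\tau,
$$
where $V_{\delta}:=\bigsqcup_{i}Q(x_{i},t_{i};r_{i})$.

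To conclude I would send $\delta\to 0$. The displayed bound already controls $\sum_{i}r_{i}^{d}$ by $(2/\varepsilon_{2})\|\ti u\|_{L^{2}}^{2}$, which is finite by the energy inequality for suitable weak solutions. Combining this with $|Q(x_{i},t_{i};r_{i})|\simeq r_{i}^{5}$ and $r_{i}<\delta$ gives
$$
|V_{\delta}|\;\lesssim\;\sum_{i}r_{i}^{5}\;\leq\;\delta^{5-d}\sum_{i}r_{i}^{d}\;\lesssim\;\delta^{5-d}\longrightarrow 0,
$$
so absolute continuity of the Lebesgue integral of $|\ti u|^{2}\in L^{1}$ forces $\iint_{V_{\delta}}|\ti u|^{2}\to 0$. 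Reinserted into the Vitali estimate, this makes the $(5\delta)$-Hausdorff premeasure of $\mathcal{S}$ at dimension $d$ vanish in the limit, i.e.\ the $d$-dimensional parabolic Hausdorff measure of $\mathcal{S}$ is zero. I anticipate no serious obstacle; the one item requiring care is adapting the Vitali lemma and Hausdorff construction to the anisotropic parabolic scaling dictated by Theorem \ref{the1.2ns}, which is routine and mirrors the derivation of Corollary \ref{coro} from Theorem \ref{the1.2}.
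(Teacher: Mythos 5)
Your argument is correct and is exactly the route the paper intends: the paper does not write out a proof of this corollary but explicitly invokes the Vitali covering lemma together with the $\varepsilon$-regularity criterion of Theorem \ref{the1.2ns}, which is precisely your contrapositive-plus-covering argument, with the absolute continuity of $\iint|\ti u|^{2}$ closing the loop. The one point you flag — adapting Vitali and the Hausdorff premeasure to the parabolic scaling $Q(x,t;r)=B(x;r)\times(t-r^{2},t)$ — is indeed routine and is the same adaptation used for Corollary \ref{coro}.
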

 It is coincident that partial regularity results of the   surface growth model \eqref{gsg}  and   the     modified Navier-Stokes equations  \eqref{mns}   are   the same, however,
  the  blow up argument  in \cite{[Lin],[LS],[BR2012],[OR]} can not be directly applied to the modified Navier-Stokes equations due to the different nonlinear term.
   Moreover, it seems that the inductive method developed in \cite{[CKN],[TY],[RWW]} works
 for the modified Navier-Stokes equations  \eqref{mns} with only $\alpha\leq2$ and
 de-Giorgi technique introduced in \cite{[Vasseur]}  breaks down for the general case $\alpha\neq2$.
  We observe that the modified blow up procedure together with the fractional integration theorem \cite{[OLeary],[Kukavica0]} ( Riesz potential estimate \cite{[HLW],[GWZ],[HW],[Wang]} )   involving
parabolic Morrey spaces  allows us to consider the partial regularity of the  modified Navier-Stokes equations  \eqref{mns}.   The proof of Theorem \ref{the1.1ns} is close to
the strategy owing  to  Wang  \cite{[Wang]}  and his co-authors  in \cite{[HLW],[GWZ],[HW],[DHW]}.

It is an interesting question to show the existence of suitable weak solution of the modified Navier-Stokes equations   \eqref{mns}. The  corresponding results at least can be seems as the partial regularity of smooth solution at the first blow-up time. We state the well-posedness of the modified Navier-Stokes equations   \eqref{mns} with  initial data in $H^{1}(\Omega)$.
\begin{theorem}\label{the1.3ns}
Suppose that {$u_{0}\in H^{1}(\Omega)$} and div$u_{0}=0$. There exist a constant $T>0$ and a strong solution of system    \eqref{mns}   with the initial datum $u_{0}$ such that
  $$ L^{\infty}(0,\,T;\,H^{1}(\Omega)) \cap
L^{2}(0,\,T;\,H^{2}(\Omega)).$$
The  strong solution can be extended beyond $t=T$ if
     $$  u  \in L^{p }(0,T; {L ^{q }(\Omega)})  ~\text{with}~  \frac{2 }{p}+\frac{3}{q}=\f{1}{\alpha-1}, ~  q>3\alpha-3.$$
\end{theorem}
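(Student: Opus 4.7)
The plan is the standard two-step strategy: construct local strong solutions by a Galerkin scheme using the cancellation identity recorded before the theorem, then extend them past a potential blow-up by a Gr\"onwall argument based on a Serrin-type $H^1$ bound.

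\emph{Step 1: Local $H^1$ existence.} Let $\{\phi_k\}$ be the $L^2$-orthonormal basis of Stokes eigenfunctions on $\Omega$, and set $u^N=\sum_{k\le N} c_k^N(t)\phi_k$ with the coefficients solving the ODE system obtained by testing \eqref{mns} against each $\phi_k$. Testing with $u^N$ and using the cancellation identity $\int u_j\partial_j u_i^{\alpha-1}u_i\,dx=0$ already recorded in the introduction yields the uniform $L^\infty L^2\cap L^2 H^1$ bound. Testing the Galerkin system with $-\Delta u^N$ and writing $\partial_j u_i^{\alpha-1}=(\alpha-1)|u_i|^{\alpha-2}\partial_j u_i$, I would estimate the nonlinear term by H\"older's inequality with triple $(\tfrac{q}{\alpha-1},\,\tfrac{2q}{q-2(\alpha-1)},\,2)$ and Gagliardo-Nirenberg $\|\nabla u\|_{L^{q_2}}\le C\|\nabla u\|_2^{1-\tau}\|\Delta u\|_2^{\tau}$ with $\tau=3(\alpha-1)/q$, obtaining
$$
\Big|\int u_j\,\partial_j u_i^{\alpha-1}\,\Delta u_i\,dx\Big|
\le C\|u\|_{L^q}^{\alpha-1}\|\nabla u\|_2^{1-\tau}\|\Delta u\|_2^{1+\tau}.
$$
Specializing to $q=6$ and using the Sobolev embedding $\|u\|_6\lesssim\|\nabla u\|_2$ together with Young's inequality (absorbing $\|\Delta u\|_2^2$ into the dissipation), this produces
$$
\frac{d}{dt}\|\nabla u^N\|_2^2+\|\Delta u^N\|_2^2\le C\|\nabla u^N\|_2^{2(\alpha+1)/(3-\alpha)}.
$$
Since $1<\alpha<7/3$ gives $3-\alpha>2/3$, the right-hand exponent is finite and strictly above $2$; comparison with the ODE $y'=Cy^\beta$ produces a time $T=T(\|u_0\|_{H^1})>0$ on which $\|\nabla u^N\|_2$ is bounded uniformly in $N$, hence $u^N$ is bounded uniformly in $L^\infty(0,T;H^1)\cap L^2(0,T;H^2)$. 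Reading the equation bounds $u^N_t$ in a negative-order space, and Aubin-Lions compactness supplies a subsequence converging strongly in $L^2(0,T;H^1)$ and a.e., which identifies the nonlinear limit and produces the desired strong solution; the pressure is recovered via $\Delta\Pi=-\partial_i\partial_j(u_i u_j^{\alpha-1})$.

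\emph{Step 2: Continuation criterion.} Assume $u\in L^p(0,T;L^q)$ with $2/p+3/q=1/(\alpha-1)$ and $q>3(\alpha-1)$. Keeping the estimate from Step 1 without specializing $q$, the condition $q>3(\alpha-1)$ ensures $\tau=3(\alpha-1)/q<1$, so Young's inequality absorbs $\|\Delta u\|_2^{1+\tau}$ into the dissipation. A direct exponent computation shows that the remaining power of $\|u\|_{L^q}$ is exactly $p=2(\alpha-1)q/(q-3(\alpha-1))$, which is precisely the Serrin exponent, so that
$$
\frac{d}{dt}\|\nabla u\|_2^2+\|\Delta u\|_2^2\le C\|u\|_{L^q}^{p}\|\nabla u\|_2^2.
$$
Gr\"onwall's inequality then gives $\|\nabla u(t)\|_2^2\le \|\nabla u_0\|_2^2\exp\!\big(C\int_0^t\|u\|_{L^q}^p\,ds\big)$, which is finite on $[0,T]$ by hypothesis. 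In particular $u(T)\in H^1$ with zero divergence, and the local existence result of Step 1 applied with initial datum $u(T)$ extends $u$ strictly past $T$.

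\emph{Main obstacle.} The principal technical difficulty is that the nonlinearity $s\mapsto|s|^{\alpha-2}s$ (the natural real-valued interpretation of $s^{\alpha-1}$) is not $C^1$ at $s=0$ when $\alpha\notin\mathbb{Z}$, so the Galerkin ODE is not classically Lipschitz and the chain rule $\partial_j u_i^{\alpha-1}=(\alpha-1)|u_i|^{\alpha-2}\partial_j u_i$ requires care in Sobolev spaces near the level set $\{u_i=0\}$. A clean fix is to first regularize by replacing $u_i^{\alpha-1}$ with $u_i(\varepsilon^2+u_i^2)^{(\alpha-2)/2}$, carry out the Galerkin construction and all energy estimates uniformly in $\varepsilon$ (the cancellation identity survives for the regularized nonlinearity), and then pass $\varepsilon\to0$ using Aubin-Lions together with the a.e.\ convergence of a subsequence provided by the strong $L^2(0,T;H^1)$ compactness.
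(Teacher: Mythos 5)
Your proposal is correct and follows essentially the same route as the paper: the key a priori bound is obtained by testing with $-\Delta u$, applying H\"older with the triple $(\tfrac{q}{\alpha-1},\tfrac{2q}{q-2\alpha+2},2)$, interpolating $\|\nabla u\|_{L^{2q/(q-2\alpha+2)}}\le C\|\nabla^2u\|_{L^2}^{3(\alpha-1)/q}\|\nabla u\|_{L^2}^{(q+3-3\alpha)/q}$, and absorbing via Young (valid since $q>3\alpha-3$), after which $q=6$ gives local existence and the Gr\"onwall/Serrin exponent $p=\tfrac{2q(\alpha-1)}{q-3\alpha+3}$ gives the continuation, exactly as in the paper's Section 4.3. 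Your additional discussion of the Galerkin scheme, Aubin--Lions compactness, and the regularization of the non-$C^1$ nonlinearity fills in details the paper leaves implicit but does not change the argument.
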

 \begin{coro}\label{coro1.3ns}
For every initial {datum} $u_{0}\in H^{1}(\Omega)$ with divergence-free condition, there exists a global regular solution $u$ to modified Navier-Stokes equations \eqref{mns} with  $\alpha\leq5/3$ in
$$
 L^{\infty}(0,\,T;\,H^{1}(\Omega)) \cap
L^{2}(0,\,T;\,H^{2}(\Omega)).
$$
\end{coro}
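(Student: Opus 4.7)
The plan is to deduce Corollary \ref{coro1.3ns} from the local well-posedness and continuation criterion of Theorem \ref{the1.3ns} by showing that, when $\alpha\leq 5/3$, the energy class \emph{automatically} supplies an admissible Serrin-type pair $(p,q)$ with $\tfrac{2}{p}+\tfrac{3}{q}=\tfrac{1}{\alpha-1}$ and $q>3\alpha-3$. So the argument is a classical continuation-by-contradiction along the maximal existence interval.

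First, I would invoke Theorem \ref{the1.3ns} to obtain, from the given $u_0\in H^1(\Omega)$ with $\Div u_0=0$, a unique strong solution on a maximal interval $[0,T^{*})$ in the class $L^{\infty}(0,T;H^{1})\cap L^{2}(0,T;H^{2})$ for every $T<T^{*}$. Next, using the cancellation property $\int u_{j}\partial_{j}u_{i}^{\alpha-1}u_{i}\,dx=0$ already highlighted in the excerpt, the strong solution satisfies the energy equality
\begin{equation*}
\|u(T)\|_{L^{2}(\Omega)}^{2}+2\int_{0}^{T}\|\ti u\|_{L^{2}(\Omega)}^{2}\,ds=\|u_{0}\|_{L^{2}(\Omega)}^{2},
\end{equation*}
uniformly for $T<T^{*}$. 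Combined with the Sobolev embedding $H^{1}(\Omega)\hookrightarrow L^{6}(\Omega)$ in three dimensions, this places $u$ in $L^{\infty}(0,T;L^{2})\cap L^{2}(0,T;L^{6})$ with a bound depending only on $\|u_{0}\|_{L^{2}}$, independent of $T$.

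Then I would interpolate to reach the continuation exponent. For $\alpha<5/3$ one has $\tfrac{1}{\alpha-1}>\tfrac{3}{2}$ and $3\alpha-3<2$, so it suffices to take $q=2$ and $p=\tfrac{4(\alpha-1)}{5-3\alpha}<\infty$: then $u\in L^{p}(0,T;L^{2})$ by $L^{\infty}(0,T;L^{2})$ on a bounded interval, $\tfrac{2}{p}+\tfrac{3}{q}=\tfrac{1}{\alpha-1}$, and $q=2>3\alpha-3$. The endpoint $\alpha=5/3$ forces $\tfrac{1}{\alpha-1}=\tfrac{3}{2}$ and $3\alpha-3=2$, so $q=2$ is forbidden; here I would instead pick any $q\in(2,6]$, say $q=3$, $p=4$, and obtain $u\in L^{4}(0,T;L^{3})$ by Hölder interpolation between $L^{\infty}L^{2}$ and $L^{2}L^{6}$. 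In both cases the resulting norm is controlled uniformly in $T<T^{*}$ by $\|u_{0}\|_{L^{2}}$.

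Finally, suppose for contradiction that $T^{*}<\infty$. The previous step gives $\|u\|_{L^{p}(0,T^{*};L^{q})}<\infty$ with $(p,q)$ meeting the hypothesis of the continuation criterion in Theorem \ref{the1.3ns}, hence the strong solution extends beyond $T^{*}$, contradicting maximality. Therefore $T^{*}=\infty$ and $u$ is a global strong solution in the asserted class. The only real point of care — and the place I expect the main subtlety to lie — is the critical endpoint $\alpha=5/3$: the energy bound is exactly scaling-invariant there, the strict inequality $q>3\alpha-3$ is saturated by $q=2$, and one must use the $L^{2}L^{6}$ part of the energy estimate (not just $L^{\infty}L^{2}$) to reach an admissible exponent $q>2$.
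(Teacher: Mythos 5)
Your proposal is correct and is essentially the argument the paper intends (the paper leaves the corollary's proof implicit after Theorem \ref{the1.3ns}): the energy estimate places $u$ in $L^{\infty}L^{2}\cap L^{2}L^{6}$, which by interpolation reaches the Ladyzhenskaya--Prodi--Serrin line $\tfrac{2}{p}+\tfrac{3}{q}=\tfrac{1}{\alpha-1}$ exactly when $\alpha\leq 5/3$, and the continuation criterion (equivalently, Gronwall applied to \eqref{keythe1.3ns} with $q=6$, where the exponent $\tfrac{4(\alpha-1)}{3-\alpha}\leq 2$ iff $\alpha\leq 5/3$) then yields global existence. Your handling of the critical endpoint $\alpha=5/3$, where $q=2$ is excluded and one must use the $L^{2}L^{6}$ part of the energy class, is the right point of care.
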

Finally, we would like to state a result to improve $\varepsilon$-regularity criterion \eqref{rc2} in  conserved Kardar-Parisi-Zhang equation \eqref{ckpz}.
\begin{theorem}\label{the1.3}
There exists an absolute {constant $\varepsilon_{03}$}
with the following property.  If $h$ is a suitable weak solution to (\ref{gsg}) and
\be\label{impro1}
\limsup_{\varrho\rightarrow0}\f{1}{\varrho^{\f{10}{7}}}\int_{t-\varrho^{4}}^{t+\varrho^{4}}\B({\int^{x+\varrho}_{x-\varrho}} |\partial_{yy}h|^{2}dy\B)^{\f67}d\tau\leq\varepsilon_{03},
\ee
or
\be\label{impro2}
\limsup_{\varrho\rightarrow0}\B(\sup_{ -\varrho^{4}\leq \tau-t \leq  \varrho^{4}}\f{1}{\varrho }{\int^{x+\varrho}_{x-\varrho}} |h(y)|dy\B)\leq\varepsilon_{03},
\ee
then $ (x,t)$ is a regular point.
\end{theorem}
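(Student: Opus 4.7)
The plan is to reduce each of the criteria \eqref{impro1} and \eqref{impro2} to an $\varepsilon$-regularity criterion already in hand: to \eqref{rc2} (equivalently Theorem \ref{the1.2} with $\alpha=2$) for \eqref{impro1}, and to Theorem \ref{the1.1} with $\alpha=2$ for \eqref{impro2}. Both reductions are interpolation-type and are closed by a local energy estimate for suitable weak solutions of \eqref{ckpz}. For \eqref{impro1}, I set $g(\tau)=\int_{x-\varrho}^{x+\varrho}|h_{yy}(y,\tau)|^{2}\,dy$ and apply H\"older's inequality in time,
\[
\int_{t-\varrho^{4}}^{t+\varrho^{4}} g(\tau)\,d\tau \le \Bigl(\int_{t-\varrho^{4}}^{t+\varrho^{4}} g(\tau)^{6/7}\,d\tau\Bigr)\cdot \|g\|_{L^{\infty}(t-\varrho^{4},\,t+\varrho^{4})}^{1/7}.
\]
The first factor is controlled by $\varepsilon_{03}\varrho^{10/7}$ by hypothesis, so the task reduces to the scale-correct pointwise bound $\|g\|_{L^{\infty}} \lesssim \varrho^{-3}$; this will be obtained by a higher-order local energy estimate on the parent cylinder $Q(2\varrho)$ in the spirit of \cite{[BOS]} — test the equation against $h_{xx}\phi^{2}$ for a suitable cut-off. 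Combining the two ingredients yields $\varrho^{-1}\iint_{Q(\varrho)}|h_{yy}|^{2}\,dy\,d\tau \le C\varepsilon_{03}$, which for $\varepsilon_{03}$ small triggers \eqref{rc2}.

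For \eqref{impro2}, the 1D Gagliardo--Nirenberg inequality
\[
\|h_{y}\|_{L^{3}(B_{\varrho})}^{3} \le C\bigl(\|h_{yy}\|_{L^{2}(B_{\varrho})}^{2}\|h\|_{L^{1}(B_{\varrho})}+\varrho^{-5}\|h\|_{L^{1}(B_{\varrho})}^{3}\bigr)
\]
is the key estimate. Integrating in $\tau$ over $I_{\varrho}=(t-\varrho^{4},t+\varrho^{4})$ and inserting the uniform smallness $\sup_{\tau}\|h(\tau)\|_{L^{1}(B_{\varrho})}\le\varepsilon_{03}\varrho$ from \eqref{impro2} gives
\[
\iint_{Q(\varrho)}|h_{y}|^{3}\,dy\,d\tau \le C\varepsilon_{03}\varrho\iint_{Q(\varrho)}|h_{yy}|^{2}\,dy\,d\tau + C\varepsilon_{03}^{3}\varrho^{2}.
\]
The local energy inequality of \cite{[OR]} then supplies the scale-invariant bound $\iint_{Q(\varrho)}|h_{yy}|^{2}\le K\varrho$, closed by the same $L^{\infty}_{\tau}L^{1}_{y}$ smallness. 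After rescaling $\tilde h(y,\tau)=h(x+\varrho y,t+\varrho^{4}\tau)$ this becomes $\iint_{Q(1)}|\tilde h_{y}|^{3} \le C(K+1)\varepsilon_{03}\le\varepsilon_{01}$, and Theorem \ref{the1.1} concludes.

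The main obstacle in both parts is closing the auxiliary $h_{yy}$ estimates — the $L^{\infty}_{\tau}$ control of $\|h_{yy}(\tau)\|_{L^{2}}^{2}$ in Part (A), and the scale-invariant integrated bound $\iint|h_{yy}|^{2}\le K\varrho$ in Part (B) — without re-invoking the criteria one is trying to prove. The algebraic cancellation $\int h_{xx}h_x^{2}\,dx=\tfrac13\int(h_x^{3})_x\,dx=0$ for the conserved KPZ nonlinearity keeps the energy identities closed; the cut-off commutator terms are handled by the absorption/interpolation scheme used in \cite{[OR],[BOS]}, with the input smallness from \eqref{impro1} or \eqref{impro2} providing the starting point for the iteration.
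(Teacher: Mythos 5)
Your treatment of criterion \eqref{impro2} is essentially the paper's own argument: the Gagliardo--Nirenberg inequality you quote is (the cube of) \eqref{inter2}, and closing it through the local energy inequality \eqref{4.1} with an iteration over dyadic scales, using the smallness of $D_{\infty,1}$, is exactly how the paper proceeds (it couples $G=D_3+E_3$ rather than first extracting $\iint|h_{yy}|^2\le K\varrho$, but that is a cosmetic difference). That half of the proposal is sound in outline.

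The treatment of criterion \eqref{impro1} has a genuine gap. Your reverse H\"older step $\int g\,d\tau\le\bigl(\int g^{6/7}\,d\tau\bigr)\,\|g\|_{L^\infty_\tau}^{1/7}$ with $g(\tau)=\int_{B_\varrho}|h_{yy}(\tau)|^2\,dy$ requires the pointwise-in-time bound $\sup_\tau\|h_{yy}(\tau)\|_{L^2(B_\varrho)}^2\lesssim\varrho^{-3}$, and this is not available for a suitable weak solution: the definition only gives $h\in L^\infty_tL^2_x\cap L^2_t\dot H^2_x$, so $\|h_{yy}(\tau)\|_{L^2}$ is merely square-integrable in time and may blow up pointwise near a potential singularity. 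Testing the equation against $h_{xx}\phi^2$ (or $h_{xxxx}\phi^2$, which is what one would actually need for $L^\infty_\tau\dot H^2$) produces a cubic term $\int h_xh_{xx}h_{xxx}\phi^2$ that can only be absorbed once one already controls $h_x$ in $L^\infty$ or has the smallness of \eqref{rc1}/\eqref{rc2} in hand --- in \cite{[BOS]} such higher energy estimates are derived \emph{after} an $\varepsilon$-regularity criterion is assumed, so your plan is circular. The obstruction is structural, not technical: by H\"older the other way, $\varrho^{-10/7}\int g^{6/7}\le C\bigl(\varrho^{-1}\int g\bigr)^{6/7}$, so \eqref{impro1} is a strictly \emph{weaker} hypothesis than \eqref{rc2}; if it implied \eqref{rc2} by a soft pointwise argument the theorem would be no improvement at all. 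The paper instead never touches $\|h_{yy}\|_{L^\infty_\tau L^2}$: it applies the spatial interpolation behind \eqref{inter1} at each fixed time, obtaining $\|h_y\|_{L^3(B)}^3\le C\|h\|_{L^2(B)}^{5/4}\|h_{yy}\|_{L^2(B)}^{7/4}$, and in the subsequent time integration splits the exponent $7/4=\tfrac14+\tfrac32$ so that the factor $\tfrac14$ is absorbed into $E^{1/8}$ (part of the unknown $F=\tilde E_*+E$) and the factor $\tfrac32=\tfrac{12}{7}\cdot\tfrac78$ into $E_{12/7,2}^{7/8}$ (the assumed-small quantity), yielding $E_3(4r)\le C\tilde E_*^{5/8}E^{1/8}E_{12/7,2}^{7/8}$; feeding this into the local energy inequality gives $F(r)\le CF^{1/2}(4r)E_{12/7,2}^{7/12}(4r)+CF(4r)E_{12/7,2}^{7/6}(4r)+CF^{3/4}(4r)E_{12/7,2}^{7/6}(4r)$, which an iteration closes, and Theorem \ref{the1.1} then applies via the smallness of $E_3$. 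You should replace your Part (A) with an interpolation of this type rather than attempting an $L^\infty_\tau\dot H^2$ estimate.
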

\begin{remark}
The $\varepsilon$-regularity criterion \eqref{impro2} means that  the sufficiently small $L^{\infty}_{t,x}$ norm of $h$  yields the regularity of suitable weak solutions of equation \eqref{ckpz}.
This criterion is an improvement of corresponding results in \cite{[OR]}.
\end{remark}

The remaining  paper is structured as follows. In Section 2,
 we present the dimensional analysis,   some dimensionless quantities and auxiliary lemmas  for the surface growth   equation \eqref{gsg}  and the modified Navier-Stokes equations, respectively.
 Then the generalized parabolic Poincar\'e inequality of solutions of equation \eqref{gsg} with three different proof are given. Section 3 is   devoted to the blow-up analysis applying to the equation \eqref{gsg} to show Theorem  \ref{the1.1} and contains the proof of Theorem  \ref{the1.2} and Theorem  \ref{the1.3}.   Section 4, we study the partial regularity of suitable weak solutions of the
 modified Navier-Stokes equations.  In Section 5, we will mention  some problems
  involving partial regularity of suitable weak solution of equations \eqref{gsg} and \eqref{ckpz}.

\section{Notations and  some auxiliary lemmas} \label{section2}

\subsection{Auxiliary results for surface growth model}

It is clear that if    $h (x,t) $  solve  system \eqref{gsg}  with $f(x,t)$, then  $h_{\lambda}   $ is also a solution of \eqref{gsg} with $f_{\lambda}$ for any $\lambda\in  \mathbb{R}^{+} ,$ where
\be\label{eqscaling}
 h_{\lambda}=\lambda^{\f{\alpha-2}{1-\alpha}}h(\lambda x,\lambda^{4}t),f_{\lambda}=\lambda^{\f{2-3\alpha}{1-\alpha}}f(\lambda x,\lambda^{4}t)
\ee
As \cite{[CKN]}, we   can
  assign  a ``dimension" to each quantity as follows
\begin{spacing}{1.35}
$$\begin{tabular}{|c|c|c|c|c|c|c|}\hline
 Quantity&$x $&$t$&$h$&$f$&$\partial_{x}$&$\partial_{t}$ \\ \hline
Dimension &$1$&$4$&$\f{ 2-\alpha}{1-\alpha}$&$\f{ 3\alpha- 2}{1-\alpha}$&$-1$&$-4$\\ \hline
\end{tabular}$$
\end{spacing}
Hence, the quantities in \eqref{rc2}-\eqref{impro2} are all   dimensionless. In addition,
We will use the following quantities:
\begin{align*}
& E_{\ast} (r)= \mathrm{ess\,sup}_{s\in (t-r^4,t+r^4)} \frac{1}{r^{\f{5-3\alpha}{1-\alpha}}}\int_{B(r)} h(s,y)^2\,d y,\\
&  \tilde{E}_{\ast} (r)= \mathrm{ess\,sup}_{s\in (t-r^4,t+r^4)} \frac{1}{r^{\f{5-3\alpha}{1-\alpha}}}\int_{B(r)}\left( h(s)-(h(s))_r \right)^2\,d y,\\
 & E(r)= \frac{1}{r^{\f{3\alpha-5}{\alpha-1}}}\iint_{Q( r)}|h_{yy}|^2dyd\tau,
   \\&D_{p}(r)= \frac{1}{r^{\f{\alpha(p+5)-2p-5 }{\alpha-1}}}\iint_{Q( r)} |h  |^pdyd\tau,\\&\tilde{D}_{p}(r)= \frac{1}{r^{\f{\alpha(p+5)-2p-5 }{\alpha-1}}}\iint_{Q( r)}\B|h-\ffgintr h(t)dydt \B|^pdyd\tau,\\
& E_{\alpha+1}(r)=\frac{1}{r^{\f{2(3-2\alpha)}{1-\alpha}}}
\iint_{Q(r)}|h_y|^{\alpha+1}dyd\tau,\\
&E_{12/7,2}(\varrho)=\f{1}{\varrho^{\f{10}{7}}}\int_{t-\varrho^{4}}^{t+\varrho^{4}}
\B({\int^{x+\varrho}_{x-\varrho}} |\partial_{yy}h|^{2}dy\B)^{\f67}d\tau\\
&D_{\infty,1}(\varrho)=\sup_{ -\varrho^{4}\leq \tau-t \leq  \varrho^{4}}\f{1}{\varrho }{\int^{x+\varrho}_{x-\varrho}} |h(y)|dy,
\end{align*}
where  we used the following notation
$$\ba
&B(x;\mu)=\{y\in \mathbb{R} ||x-y|\leq \mu\},~~ &B&(\mu):= B(x;\mu),
\\
&Q(x,t;\mu)=B(x,\,\mu)\times(t-\mu^{4}, t+\mu^{4}),~~ &Q&(\mu):= Q(x,t;\mu)\\
& \tilde{B}(\mu)=B(x_{0};\,\mu),~~ &\tilde{Q}&(\mu):= Q(x_{0},t_{0};\mu)
,\\
&  \dot{B} (\mu)=B(0;\,\mu),~~ & \dot{Q} &(\mu):= Q(0,0;\mu),
\ea$$
and $$\ba
&h_{_{B(r),\sigma}}(t)={\int_{_{B(r),\sigma} }\!\!\!\!\!\!\!\!\!\!\!\!\!\!\!\!-~\,} ~~~h(t,y)dy =\f{\int_{B(r)} h(y,t)\sigma dy}{\int_{B(r)}\sigma dy},\\
&h_{_{Q(r),\sigma}}=\ffgint~ h(t,y)dy =\f{\iint_{ Q(r)}h(y,t)\sigma dydt}{\iint_{Q(r)}\sigma dydt},\\
&h_{_{B(r) }}(t)=h_{_{B(r),1}}(t),\\
& h_{_{Q(r) }}=h_{_{Q(r),1}},
\ea$$
where  $\sigma(x)$ is a smooth cut off function such that $\sigma(x)=1$ in $B(\vartheta_{1} r)$
and $\sigma(x)=0$ in $B^{c}(r)$, where $0<\vartheta_{1}<1$.

For $p\in [1,\,\infty]$, the notation $L^{p}(0,\,T;X)$ stands for the set of measurable functions $f(x,t)$ on the interval $(0,\,T)$ with values in $X$ and $\|f(\cdot,t)\|_{X}$ belongs to $L^{p}(0,\,T)$. For $f\in L^{1}(\mathbb{T})$, we denote $\hat{f}(k)=\f{1}{2\pi}\int_{\mathbb{T}}e^{-ikx}f(x)dx$ for the
$k$th Fourier coefficients of $f$. The space $H^{s}$
  is equipped with the norm $\|f\|_{H^{s}(\mathbb{T})}=(\sum_{k\in \mathbb{Z}}  ( 1 + |k|^{2s}  ) |\hat{f}(k) |^2 )^{1/2}$. The homogeneous  space $\dot{H}^{s}$  is given by
  $$
  \dot{H}^{s}=\B\{f\B|\|f\|_{\dot{H}^{s}}=(\sum_{k\in \mathbb{ Z}}  |k|^{2s}  |\hat{f}(k) |^2\ )^{1/2}\ \text{and} \int_{\mathbb{T}} f =0 \B\}.
  $$ The Morrey space {$\mathcal{{M}}^{p,l}(\Omega)$}, with $1\leq l<\infty$, $1\leq  p\leq\infty$  and a domain {$\Omega\subset\mathbb{R}^{d}$}, is defined as the space of all measurable functions $f$ on $\Omega$ for which the norm
$$
\|f\|_{{\mathcal{{M}}^{p,l}(\Omega)}}=\sup_{R>0}
\sup_{x\in \Omega}{R^{d(\f1p-\f1l)}}\B(\int_{B_{x}(R)\cap\Omega}|f(y)|^{l}dy\B)^{\f1l}<\infty.
$$
Similarly, one  can define  the
the   parabolic Morrey space  $\mathcal{{M}}^{p,l}(Q(r))$.
We will use $C$ to denote an absolute
   constant which may be different from line to line unless otherwise stated.
   For simplicity,   we write $$\|f\| _{L^{p }L^{q}(Q(r))}:=\|f\| _{L^{p}(t-r^{4},t+r^{4};L^{q}(B(r)))}~~   \text{and}~~
  \|f\| _{L^{p}(Q(r))}:=\|f\| _{L^{p,p}(Q(r))}. $$

\begin{definition}\label{defi}
A  function   $h$  is called a suitable weak solution to the   equation  \eqref{gsg}  provided the following conditions are satisfied,
\begin{enumerate}[(1)]
\item $h \in L^\infty ((0,T);L^2(\mathbb{T})) \cap L^2 ((0,T);\dot{H}^2(\mathbb{T}))$\label{SWS1};
 \item$h$~solves (\ref{gsg})  in the sense of distributions, for $\varphi(x,t)\in C_0^\infty (\mathbb{T} \times (0, T))$,
\be \label{solvesdistri}
\int_{0}^{T}\int_{\mathbb{T}}( h(t)\varphi_{t}  - h_{xx} \varphi_{xx} - |h_x|^\alpha \varphi_{xx} )dxd t =-\int_{0}^{T}\int_{\mathbb{T}}   f  \varphi(t) dxd\tau;
     \ee
\item $h$ satisfies the following inequality, for a.e. $t\in[0,T]$,
\be\label{loc}\ba
&\frac{1}{2} \int_{\mathbb{T}} |h(t)|^2 \phi(t)dx + \int_0^t \int_{{\mathbb{T}}}| h_{xx}|^2\phi dxd\tau\\
\leq& \int_0^t \int_{\mathbb{T}} \left(\frac{1}{2}(\phi_t-\phi_{xxxx})|h|^2 \right.
\left.+2h_x^2\phi_{xx}-
\frac{2\alpha+1}{\alpha+1}|h_x|^{\alpha}h_x\phi_x
-|h_x|^{\alpha}h\phi_{xx} +fh\phi\right)dxd\tau
\ea\ee
holds for all $\phi \in C_0^\infty (\mathbb{T} \times (0,\infty ))$. \label{SWS3}
\end{enumerate}
\end{definition}
\begin{remark}
 Just as $\alpha=2$, equation \eqref{gsg} has the cancellation of the nonlinear term in energy space $L^{2}$, namely,
 $$
 \int_{\mathbb{T}}\partial_{xx}(|h_{x}|^{\alpha}) hdx=\int_{\mathbb{T}}  |h_{x}|^{\alpha}h_{xx} dx =
 \f{1}{\alpha+1}\int_{\mathbb{T}} \partial_{x}(|h_{x}|^{\alpha}h_{x})dx=0.
 $$
 Therefore, the existence of suitable weak solution of equation \eqref{gsg} with $\alpha<7/3$  can be
showed as the argument  in \cite{[OR]}. It seems that the critical case  $\alpha=7/3$  corresponds to the existence of suitable weak solution of  the 4D Navier-Stokes equations.  Maybe a parabolic concentration-compactness method recently  developed by Wu \cite{[Wu]} for the 4D Navier-Stokes equations can deal with this case.
\end{remark}

Next, we are concerned with the Poincar\'e inequality of  weak solutions of the parabolic equation. To the knowledge of authors,
  Poincar\'e type inequality for solutions of parabolic system plays an important role in
  partial regularity for
parabolic systems, especially,  by  $\mathcal {A} $-caloric approximation (see \cite{[Scheven],[DMS],[DM],[Struwe],[BF]} and references).

Let  $\sigma(x)$ be a smooth cut off function such that $\sigma(x)=1$ in $B(\vartheta_{1} r)$
and $\sigma(x)=0$ in  $B^{c}(r)$ , where $0<\vartheta_{1}<1$.
\begin{lemma}\label{parapoincare}
Suppose that $h$  is a weak solution of equation \eqref{gsg} satisfying \eqref{solvesdistri}.
 Then, there hold
 \be\ba\label{poin3}
&\B\|h-h_{_{Q(\vartheta_{1}r) }}\B\|_{L^{p}(Q(\vartheta_{1}r) )}^{p}
\leq C_{1}\B\{ r^{p}\|h_y\|_{L^{p}(Q(r) )}^{p}+r^{5+2p-5\alpha}\|h_{y}\|^{p\alpha}_{L^{p}(Q(r))}+r^{5-p}\|f\|^{p}_{L^{1}(Q(r))}\B\}.
\ea\ee

\end{lemma}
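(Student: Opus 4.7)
The plan is to follow the Krylov-style strategy sketched in the introduction: combine a one-variable Poincar\'e--Wirtinger inequality in the time slot with a classical spatial Poincar\'e inequality on each time slice, using the weak form \eqref{solvesdistri} to estimate the time derivative of a weighted spatial average of $h$. Let $\sigma(y)$ be a smooth spatial cutoff with $\sigma\equiv1$ on $B(\vartheta_1 r)$, $\operatorname{supp}\sigma\subset B(r)$ and $|\partial_y^k\sigma|\lesssim r^{-k}$, so that $\int\sigma\,dy\sim r$. Set $g(\tau):=h_{B(r),\sigma}(\tau)$ and $c:=h_{Q(r),\sigma}$. Using that for any constant $c$ one has $\|h-h_{Q(\vartheta_1 r)}\|_{L^p(Q(\vartheta_1 r))}\le 2\|h-c\|_{L^p(Q(\vartheta_1 r))}$, I split
$$
h-c \;=\;\bigl(h-g(\tau)\bigr)\;+\;\bigl(g(\tau)-c\bigr),
$$
so that the lemma reduces to two pieces: a slice-wise spatial piece and a purely temporal piece.

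For the spatial piece, the standard weighted Poincar\'e inequality on $B(r)$ gives
$$
\int_{B(r)}|h(y,\tau)-g(\tau)|^p\,dy\;\le\;Cr^{p}\int_{B(r)}|h_y(y,\tau)|^p\,dy,
$$
and integrating in $\tau$ over $(t-\vartheta_1^{4}r^{4},t+\vartheta_1^{4}r^{4})$ produces the term $Cr^{p}\|h_y\|_{L^p(Q(r))}^{p}$. For the temporal piece, I apply the one-dimensional Poincar\'e--Wirtinger inequality on the interval $I=(t-r^4,t+r^4)$ (of length $\sim r^4$) to obtain
$$
\int_{I}|g(\tau)-c|^p\,d\tau\;\le\;Cr^{4p}\int_{I}|g'(\tau)|^p\,d\tau,
$$
so the task becomes to control $g'(\tau)$ via the equation.

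To produce such a control, I test \eqref{solvesdistri} against $\varphi(y,\tau)=\eta(\tau)\sigma(y)$ with $\eta$ ranging over smooth compactly supported temporal bumps; integration by parts in $y$ (legal because $\sigma$ vanishes near $\partial B(r)$) yields, for a.e.\ $\tau$,
$$
\frac{d}{d\tau}\int_{B(r)} h\,\sigma\,dy \;=\; \int_{B(r)}\bigl(h_y\,\sigma_{yyy} \;-\;|h_y|^{\alpha}\sigma_{yy}\;+\;f\,\sigma\bigr)\,dy,
$$
which, after dividing by $\int\sigma\,dy\sim r$, gives a pointwise bound on $|g'(\tau)|$. Three terms appear: using $\|\sigma_{yyy}\|_{L^{p'}}\lesssim r^{-3+1/p'}$, H\"older in $y$ controls the first by a multiple of $r^{-1-3+1/p'}\|h_y(\cdot,\tau)\|_{L^p(B(r))}$; the second is bounded by $r^{-3}\int_{B(r)}|h_y|^{\alpha}\,dy$, which after a further spatial H\"older step converts to $\|h_y\|_{L^p(B(r))}^{\alpha}$ with an appropriate $r$-factor; and the third is bounded by $r^{-1}\|f(\cdot,\tau)\|_{L^1(B(r))}$. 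Raising to the $p$-th power, integrating in $\tau\in I$, multiplying by the $Cr^{4p}$ from Poincar\'e--Wirtinger, and summing with the spatial piece yields exactly the three terms on the right of \eqref{poin3}; a careful bookkeeping of the exponents of $r$ reproduces $r^{p}$, $r^{5+2p-5\alpha}$, and $r^{5-p}$ respectively.

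The main obstacle, in my view, is twofold: first, the bookkeeping of the $r$-exponents for the nonlinear contribution, where the spatial-H\"older step that passes from $L^{\alpha}$ to $L^{p}$ norms of $h_y$ must be chosen to match the stated power $5+2p-5\alpha$; second, rigorously justifying the pointwise-in-$\tau$ identity for $\tfrac{d}{d\tau}\!\int h\sigma\,dy$, since $h_t$ is only known in the sense of distributions. For the latter, the standard fix is to time-mollify $h$, apply the argument to the mollification (for which the identity is literal), and pass to the limit using $h\in L^\infty_tL^2_x\cap L^2_t\dot H^2_x$ together with $f\in L^1$, which is enough to conclude that $g$ is absolutely continuous on $I$ and to validate the temporal Poincar\'e--Wirtinger step.
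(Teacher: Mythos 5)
Your overall architecture is the same as the paper's Method (1): split $h-c$ into a slice-wise spatial part plus a temporal part, identify $\frac{d}{d\tau}\int_{B(r)}h\sigma\,dy$ by testing \eqref{solvesdistri} with $\eta(\tau)\sigma(y)$, and control the temporal oscillation by a one-dimensional Poincar\'e--Wirtinger inequality; the spatial piece, the identity for $g'$, and the rigor discussion (mollification/Steklov averages) all match the paper. However, there is a genuine gap in the temporal step. You use the $L^p$--$L^p$ form
$$\int_I|g(\tau)-c|^p\,d\tau\le Cr^{4p}\int_I|g'(\tau)|^p\,d\tau,$$
which forces you to bound $\int_I|g'(\tau)|^p\,d\tau$. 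For the linear term this is harmless (your $L^{p'}$ estimate on $\sigma_{yyy}$ yields exactly $\|h_y\|^p_{L^p(Q(r))}$), but for the nonlinear term you end up with
$$\int_I\Big(\int_{B(r)}|h_y(y,\tau)|^{\alpha}\,dy\Big)^p\,d\tau,$$
which after your spatial H\"older step becomes, up to powers of $r$, $\int_I\big(\int_{B(r)}|h_y|^p\,dy\big)^{\alpha}\,d\tau$. For $\alpha>1$ this is \emph{not} controlled by $\big(\iint_{Q(r)}|h_y|^p\,dy\,d\tau\big)^{\alpha}$ times any power of $r$: taking $h_y$ concentrated on a time window of length $\delta$ makes the ratio blow up like $\delta^{1-\alpha}$. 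The same reverse-H\"older-in-time obstruction hits the force term, where you would need $\int_I\|f(\cdot,\tau)\|_{L^1(B(r))}^p\,d\tau\lesssim r^{\gamma}\big(\int_I\|f(\cdot,\tau)\|_{L^1(B(r))}\,d\tau\big)^p$, false for $p>1$. So the ``careful bookkeeping of the exponents of $r$'' you defer cannot be completed for these two terms; your scheme only proves the lemma with strictly stronger (mixed $L^p_\tau$-type) norms of $|h_y|^\alpha$ and $f$ on the right-hand side.

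The fix is exactly what the paper does: use the $L^\infty$--$L^1$ oscillation bound $\sup_{\tau\in I}|g(\tau)-c|\le\int_I|g'(s)|\,ds$, i.e.\ the uniform-in-$\tau$ estimate \eqref{keypoin}, instead of the $L^p$--$L^p$ version. Then the time derivative is integrated in $L^1$ over all of $Q(r)$ \emph{before} any $p$-th power is taken, so the nonlinear term enters only through $\|h_y\|^{\alpha}_{L^\alpha(Q(r))}$ and the force only through $\|f\|_{L^1(Q(r))}$; a single space--time H\"older inequality on $Q(r)$, followed by multiplication by $|Q(r)|\sim r^5$, then produces the stated powers $r^{p}$, $r^{5+2p-5\alpha}$ and $r^{5-p}$.
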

\begin{remark}
The key point for the parabolic Poincar\'e inequality \eqref{poin3} is the following estimate, for $\tau\in (t-r^{4},t+r^{4})$,
\be\ba\label{keypoin}
\B|h_{_{B(r),\sigma}}(\tau) -h_{_{Q(r),\sigma}} \B|
\leq& C r^{-4} \|h_{y}\|_{L^{1}(Q(r))}+Cr^{-3} \|h_{y}\|^{\alpha}_{L^{\alpha}(Q(r))}+Cr^{-1}\|f\|_{L^{1}(Q(r))}.
\ea\ee
Here, we will provide three different methods to show it.
\end{remark}

\begin{proof}   Assume for a while we have proved that \eqref{keypoin}.
By the triangle inequality, the wighted Poincar\'e inequality and \eqref{keypoin}, we know that
$$\ba
&\iint_{Q(r)}\B|h-\ffgint h(t)dydt\B|^{p}\sigma dydt\\
\leq&\iint_{Q(r)} \B|h-\fbxo h(t)dy \B|^{p}\sigma dxdt+\iint_{Q(r)} \B| \fbxo h(t)dy -\ffgint h(t)dydt\B|^{p}\sigma dyd\tau\\
\leq& Cr^{p}\iint_{Q(r)} |h_y|^{p} dydt+ Cr^{5}\B[r^{-4} \|h_{y}\|_{L^{1}(Q(r))}+r^{-3} \|h_{y}\|^{\alpha}_{L^{\alpha}(Q(r))}+r^{-1}\|f\|_{L^{1}(Q(r))}\B]^{p}.
\ea$$
We further deduce  from the H\"older inequality that
$$\ba
&\iint_{Q(r)}\B|h-\ffgint h(t)dxdt\B|^{p}\sigma dxdt\\
\leq& Cr^{p}\iint_{Q(r)} |  h_{y}|^{p} dxdt+Cr^{5+2p-5\alpha}\|h_{y}\|^{p\alpha}_{L^{p}(Q(r))}
 +Cr^{5-p}\|f\|^{p}_{L^{1}(Q(r))}\\
 \leq& Cr^{p}\iint_{Q(r)} |  h_{y}|^{p} dxdt+Cr^{5+2p-5\alpha}\|h_{y}\|^{p\alpha}_{L^{p}(Q(r))}
 +Cr^{5+4p-\frac{5}{m}p}\|f\|^{p}_{\mathcal{M}^{m,1}(Q(1))}.
\ea$$
Direct calculation means that
$$\iint_{Q(\vartheta_{1}r)}\B|h-\ffgintrv h(t)dydt\B|^{p}dxdt\leq C\iint_{Q(\vartheta_{1}r)}\B|h- \ffgint h(t)dydt\B|^{p}\sigma dydt,$$
which yields that desired estimate.

It
suffices to  show \eqref{keypoin}.   The argument can be made rigorous by standard approximation techniques or
by the use of Steklov averages (see \cite{[Scheven],[DMS],[DM],[Struwe]} and references ).

Method (1):
 Note that
$$h_{_{Q(r),\sigma}}=\f{\int_{t-r^{4}}^{t+r^{4}} h_{_{B(r),\sigma}}d\tau}{\int_{t-r^{4}}^{t+r^{4}} d\tau}. $$
Hence, we conclude by the Poincar\'e-Wirtinger's inequality that, for $\tau\in (t-r^{4},t+r^{4})$,
\be\label{2.8}
\B|h_{_{B(r),\sigma}} -h_{_{Q(r),\sigma}} \B|\leq \int_{t-r^{4}}^{t+r^{4}}|h_{_{B(r),\sigma}} ' (\tau) | d\tau.
\ee
By virtue of integration by parts, we arrive at
$$\ba
 h_{_{B(r),\sigma}} ' (\tau) =&\f{\int_{B(r)} h_{\tau}(y,\tau)\sigma dy}{\int_{B(r)}\sigma dx}\\=&\f{\int_{B(r)} (-h_{yyyy}-\partial_{yy}|h_y|^\alpha+f)\sigma dy}{\int_{B(r)}\sigma dx}\\=&\f{\int_{B(r)} ( h_{x }\sigma_{yyy}- |h_y|^\alpha\sigma_{yy }+f\sigma) dy}{\int_{B(r)}\sigma dx}.
\ea$$
The above formal computations can be made rigorous by Steklov averages (see \cite[P.216-218, Proof of Lemma 5.1]{[BF]}).

Plugging this into \eqref{2.8},
we further get
$$
\B|h_{_{B(r),\sigma}} -h_{_{Q(r),\sigma}} \B|
\leq    C r^{-4} \|h_{y}\|_{L^{1}(Q(r))}+Cr^{-3} \|h_{y}\|^{\alpha}_{L^{\alpha}(Q(r))}+Cr^{-1}\|f\|_{L^{1}(Q(r))}.
$$

Method (2): Assume that $\tau_{1},\tau_{2}\in(0,T)$. Without loss of generality, we suppose that $\tau_{1}<\tau_{2}$.
Consider the Lipschitz continuous function which is defined  by
$$
\xi_{\epsilon}(t)=\left\{\ba
& \f{t-\tau_{1}}{\epsilon},~~~\tau_{1} <t\leq\tau_{1}+\epsilon,\\
& 1, ~~~~~~~~~~\tau_{1}+\epsilon<t\leq \tau_{2}-\epsilon,\\
&\f{\tau_{2}-t}{\epsilon}, ~~~~\tau_{2}-\epsilon<t<\tau_{2}.\ea\right.
$$
Let $\varphi(t)=\xi_{\epsilon}(t)(h_{_{B(r),\sigma}}(\tau_{1})-h_{_{B(r),\sigma}}(\tau_{2}) )\sigma(y)$ as the text function in \eqref{solvesdistri}.

According to  Lebesgue's differentiation theorem and  the definition of  $h_{_{B(r),\sigma}}$, we see that
$$\ba
&\lim_{\epsilon\rightarrow0}\int_{0}^{T}\int_{\mathbb{T}}  h(t)\varphi_{t}dydt\\
=&\lim_{\epsilon\rightarrow0}\B(\f{1}{\epsilon}
\int_{\tau_{1}}^{\tau_{1}+\epsilon}\int_{\mathbb{T}}h(t)\sigma dydt-\f{1}{\epsilon}
\int_{\tau_{2}-\epsilon}^{\tau_{2}}\int_{\mathbb{T}}h(t)\sigma dydt
\B)(h_{_{B(r),\sigma}}(\tau_{1})-h_{_{B(r),\sigma}}(\tau_{2}) ) \\
=&\B|\fbxo h(\tau_{1})dy-\fbxo h(\tau_{2})dy\B|^{2}\int_{B(r)}\sigma dy\\
\geq&C r\B|\fbxo h(\tau_{1})dx-\fbxo h(\tau_{2} )dy\B|^{2} .
\ea$$

The H\"older inequality  guarantees that
\be\ba\label{poin5}
 \B|\lim_{\epsilon\rightarrow0}\int_{0}^{T}\int_{\mathbb{T}}  h_{yy} \varphi_{yy}dydt\B|&=\B|(h_{_{B(r),\sigma}}(\tau_{1})-h_{_{B(r),\sigma}}(\tau_{2}) )\int^{\tau_{2}}_{\tau_{1}}\int_{B(r)} h_{y}\sigma_{yyy}dyd\tau\B|\\
&\leq C r^{-3}\B| \fbxo h(\tau_{1})dx-\fbxo h(\tau_{2} )dy\B|\|h_{y}\|_{L^{1}(Q(r))}.\ea\ee
Likewise,
\be\ba
&\B|\lim_{\epsilon\rightarrow0}\int_{0}^{T}\int_{\mathbb{T}}  |h_{y}|^{\alpha}\varphi_{yy}dydt\B|\leq C r^{-2}\B| \fbxo h(\tau_{1})dy-\fbxo h(\tau_{2} )dy\B|\|h_{y}\|^{\alpha}_{L^{\alpha}(Q(r))},\\
&\B|\lim_{\epsilon\rightarrow0}\int_{0}^{T}\int_{\mathbb{T}} f{\varphi} dydt\B|\leq C  \B| \fbxo h(\tau_{1})dy-\fbxo h(\tau_{2} )dy\B|\|f\|_{L^{1}(Q(r))}.
\ea\ee
Combining    \eqref{poin5} and \eqref{poin5}, we obtain the desired estimate
$$
\B| \fbxo h(\tau_{1})dy-\fbxo h(\tau_{2} )dy\B|\leq C r^{-4} \|h_{x}\|_{L^{1}(Q(r))}+Cr^{-3} \|h_{y}\|^{\alpha}_{L^{\alpha}(Q(r))}+Cr^{-1}\|f\|_{L^{1}(Q(r))},
$$
which {implies} that
\be\ba\label{poin6}
\B|h_{_{B(r),\sigma}}(\tau) -h_{_{Q(r),\sigma}}\B|=&\B|\f{1}{2r^{4}}\int_{t-r^{4}}^{t+r^{4} }\B(\fbxo h(\tau)dy-\fbxo h(\tau_{1})dy\B)d\tau_1\B|\\
\leq& C r^{-4} \|h_{y}\|_{L^{1}(Q(r))}+Cr^{-3} \|h_{y}\|^{\alpha}_{L^{\alpha}(Q(r))}+Cr^{-1}\|f\|_{L^{1}(Q(r))},
\ea\ee
where the definition  of $h_{_{Q(r),\sigma}}$ was used.

Method (3):  Formally, in the light of the equation \eqref{gsg} and integration by part, we know that
$$\ba
&\B| \fbxo h(\tau_{1})dy-\fbxo h(\tau_{2} )dy\B|\\
=&\B|\f{\int^{\tau_{2}}_{\tau_{1}}\int_{B(r)}h_{\tau}(y,\tau)\sigma d\tau dy}{\int_{B(r)}\sigma dy} \B|
\\=&\B|\f{\int^{\tau_{2}}_{\tau_{1}}\int_{B(r)} (-h_{yyyy}-\partial_{yy}|h_y|^\alpha+f)\sigma dy}{\int_{B(r)}\sigma dy} \B|\\\leq& C r^{-4} \|h_{y}\|_{L^{1}(Q(r))}+Cr^{-3} \|h_{y}\|^{\alpha}_{L^{\alpha}(Q(r))}+Cr^{-1}\|f\|_{L^{1}(Q(r))}.
\ea$$
From \eqref{poin6}, we also complete the proof of \eqref{keypoin}.
\end{proof}
 Next, we  establish some interpolation inequality for proving Theorems \ref{the1.2} and \ref{the1.3}.
\begin{lemma}\label{inter}
Suppose that $h\in L^{\infty}L^{1}(Q(r))\cap L^{2}H^{2}(Q(r))$, then
\begin{align}
\label{inter2}
&\|h_{y}\|_{L^{3}(Q(r))} \leq \|h\|^{\f13}_{L^{\infty}L^{1}(Q(r))} \|\partial_{yy}h\|^{\f23}_{L^{2}(Q(r))}
+Cr^{-\f13}\|h\|_{L^{\infty}L^{1}(Q(r))},
\\\label{inter3}
&\|h \|_{L^{3}(Q(r))}  \leq C r^{\f{4}{5}}\|h\|_{L^{1}}^{\f{11}{ 15}}\|\partial_{yy}h\|^{\f{4}{15}}_{L^{2}(Q(r))} +Cr^{\f23}\|h\|_{L^{\infty}L^{1}(Q(r))}.
\end{align}
Assume that $h$ is spatial periodic function on $(x-r_{0},x+r_{0})$ and $h\in L^{\infty}L^{2}(Q(r_{0}))\cap L^{2}H^{2}(Q(r_{0}))$, then
\be\label{inter1}
  \|h_y\|_{L^{\alpha+1}(Q(r_{0}))}     \leq C r_{0}^{\f{7-3\alpha}{2(\alpha+1)}}
    \| h \|_{L^\infty L^2 (B(r_{0}))}^{\f{\alpha+3}{4(\alpha+1)}} \|  \partial_{yy}
    h \|_{L^2(Q(r_{0}))}^{\f{3\alpha+1}{4(\alpha+1)}}.
\ee
\end{lemma}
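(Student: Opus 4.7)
The overall strategy for all three estimates is the same: apply a one-dimensional (spatial) Gagliardo--Nirenberg inequality pointwise in $t$, and then integrate in time with H\"older's inequality. The rescaled forms arise only because one works on an interval of length $r$ and a time slab of length $2r^{4}$; tracking the scaling exponents is the only bookkeeping required.

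For \eqref{inter2}, I start from the inhomogeneous 1D Gagliardo--Nirenberg inequality
\[
\|u_y\|_{L^3(B(r))}\leq C\|u\|_{L^1(B(r))}^{1/3}\|u_{yy}\|_{L^2(B(r))}^{2/3}+Cr^{-5/3}\|u\|_{L^1(B(r))},
\]
(the exponent $5/3$ is forced by scaling). Raising to the third power, taking $\sup_t$ on the $L^1$ factor, integrating over $(t-r^{4},t+r^{4})$, and extracting the cube root yields \eqref{inter2}; the lower-order term contributes $r^{-5}\cdot 2r^{4}\|h\|_{L^\infty L^1}^3$, whose cube root is the stated $Cr^{-1/3}\|h\|_{L^{\infty}L^{1}}$. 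Estimate \eqref{inter3} follows in the same spirit: I use the 1D Sobolev-type bound $\|u\|_{L^\infty(B(r))}\leq C\|u\|_{L^1}^{3/5}\|u_{yy}\|_{L^2}^{2/5}+Cr^{-1}\|u\|_{L^1}$, combine it with the interpolation $\|u\|_{L^3}^3\leq\|u\|_{L^\infty}^{2}\|u\|_{L^1}$ to obtain the pointwise-in-$t$ bound
\[
\|u\|_{L^3(B(r))}^{3}\leq C\|u\|_{L^1}^{11/5}\|u_{yy}\|_{L^2}^{4/5}+Cr^{-2}\|u\|_{L^1}^{3},
\]
then integrate in time (pulling out $\|h\|_{L^\infty L^1}$) and take the cube root; the resulting exponent on $r$ is $\frac{4}{5}$ in the main term and $\frac{2}{3}$ in the remainder, matching \eqref{inter3}.

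For \eqref{inter1}, I exploit spatial periodicity to apply the mean-zero Gagliardo--Nirenberg to $v=h_y$. Specifically, $\|v\|_{L^\infty}\leq C\|v\|_{L^2}^{1/2}\|v_y\|_{L^2}^{1/2}$, and interpolating $\|v\|_{L^{\alpha+1}}\leq\|v\|_{L^\infty}^{(\alpha-1)/(\alpha+1)}\|v\|_{L^2}^{2/(\alpha+1)}$ gives
\[
\|h_y\|_{L^{\alpha+1}}\leq C\|h_y\|_{L^2}^{\frac{\alpha+3}{2(\alpha+1)}}\|h_{yy}\|_{L^2}^{\frac{\alpha-1}{2(\alpha+1)}}.
\]
Applying once more $\|h_y\|_{L^2}\leq C\|h\|_{L^2}^{1/2}\|h_{yy}\|_{L^2}^{1/2}$ (again using periodicity) produces the pointwise-in-$t$ inequality with exponents $\frac{\alpha+3}{4(\alpha+1)}$ and $\frac{3\alpha+1}{4(\alpha+1)}$, which sum to $1$. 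Raising to the $(\alpha+1)$-th power and integrating in time leaves me with $\int\|h_{yy}\|_{L^2}^{(3\alpha+1)/4}\,dt$, to which I apply H\"older with exponent $8/(3\alpha+1)$; the complementary factor yields the weight $r_0^{(7-3\alpha)/2}$, whose $(\alpha+1)$-th root is precisely $r_{0}^{(7-3\alpha)/(2(\alpha+1))}$.

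The one genuine subtlety is that the H\"older step in the proof of \eqref{inter1} requires $(3\alpha+1)/8\leq 1$, i.e.\ $\alpha\leq 7/3$, so the constraint $\alpha<7/3$ imposed throughout the paper is exactly what makes the inequality well-posed at this endpoint; the other two estimates involve only the $L^2$ norm of $h_{yy}$ raised to a power $\leq 1$, so no such restriction arises. No spatial boundary terms appear because \eqref{inter2}--\eqref{inter3} are pure pointwise-in-$t$ Gagliardo--Nirenberg bounds on $B(r)$, and \eqref{inter1} uses periodicity only to ensure $h_y$ has zero mean; no property of the equation \eqref{gsg} itself is invoked.
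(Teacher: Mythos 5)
Your proof is correct and follows essentially the same route as the paper: pointwise-in-time one-dimensional Gagliardo--Nirenberg inequalities followed by H\"older's inequality in time, with identical exponents throughout. The only cosmetic difference is in \eqref{inter1}, where you obtain the spatial bound $\|h_y\|_{L^{\alpha+1}}\le C\|h\|_{L^2}^{(\alpha+3)/(4(\alpha+1))}\|\partial_{yy}h\|_{L^2}^{(3\alpha+1)/(4(\alpha+1))}$ via an Agmon-type argument for the mean-zero function $h_y$ rather than the paper's Fourier-series fractional Sobolev embedding $\|h_y\|_{L^{\alpha+1}}\le C\|h\|_{\dot H^{(3\alpha+1)/(2(\alpha+1))}}$; both yield the same inequality, and your remark that the temporal H\"older step requires $(3\alpha+1)/8\le 1$, i.e.\ $\alpha\le 7/3$, correctly identifies the source of the paper's standing restriction.
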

\begin{proof}
Thanks to the  Gagliardo-Nirenberg inequality, we get
$$
\ba
&\|h_{y}\|_{L^{3}} \leq C\|h\|_{L^{1}}^{\f13}\|\partial_{yy}h\|^{\f23}_{L^{2}}+Cr^{-\f{5}{3}}{\|h\|}_{L^{1}},\\
&\|h \|_{L^{3}} \leq C  \|h\|_{L^{1}}^{\f{11}{15}}
\|\partial_{yy}h\|^{\f{4}{15}}_{L^{2}}+Cr^{-\f{2}{3}}
\|h\|_{L^{1}}.
\ea$$
Integrating over $\tau$ from $t-r^{4} $ to $t+ r^{4} $, we arrive at
$$\ba
\|h_{y}\|^{3}_{L^{3}(Q(r))} \leq& C \|h\|_{L^{\infty}L^{1}(Q(r))} \|\partial_{yy}h\|^{2}_{L^{2}(Q(r))}
+Cr^{-1}\|h\|_{L^{\infty}L^{1}(Q(r))}^{3},
\ea$$
and
$$\ba
\|h \|_{L^{3}(Q(r))}^{3} \leq& C\|h\|_{L^\infty L^{1}}^{\f{11}{ 5}}\int_{t-r^{4}}^{t+r^{4}}\|\partial_{yy}h\|^{\f{4}{ 5}}_{L^{2}}d\tau+Cr^2\|h\|_{L^{\infty}L^{1}(Q(r))}^{3}\\\leq& C\|h\|_{L^\infty L^{1}}^{\f{11}{ 5}}\B(\int_{t-r^{4}}^{t+r^{4}}\|\partial_{yy}h\|^{2}_{L^{2}}d\tau\B)^{\f{2}{ 5}}r^{\f{12}{5}}+Cr^{2}\|h\|^{3}_{L^{\infty}L^{1}(Q(r))},
\ea
$$
where the H\"older inequality was used. We get the desired estimates \eqref{inter2} and \eqref{inter3}.

Since
$\int_{x-r_{0}}^{x+r_{0}} h_ydy=0$,
 we derive from the fractional   Poincar\'e inequality  that
\be
\|h_y\|_{L^{\alpha+1}(B(r_{0}))}  \leq C \| h \|_{\dot{H}^{\f{3\alpha+1}{2(\alpha+1)}}(B(r_{0}))}.
\ee
As \cite{[OR]},
the latter inequality can be derived as follows
$$ \ba
  \|h_y\|_{L^{\alpha+1}(B(r_{0}))}^2&\leq
  C\|h_y\|_{H^{\f{\alpha-1}{2(\alpha+1)}}(B_1)}^2\leq C \sum_{k\in  \mathbb{Z}}
  \left( 1+|k|^{\f{\alpha-1}{ (\alpha+1)}}\right) \left| \widehat{h_y} (k) \right|^2 \\
  &=C \sum_{k\ne 0} \left(|k|^2+|k|^{2+\f{\alpha-1}{ (\alpha+1)}}\right)
   \left| \widehat{h} (k) \right|^2 \leq C \sum_{k\ne 0} |k|^{2+\f{\alpha-1}{ (\alpha+1)}} \left| \widehat{h} (k) \right|^2\\
  &\leq  C \| h \|_{\dot{H}^{\f{3\alpha+1}{2(\alpha+1)}}(B(r_{0}))}^2,
  \ea
$$
  where $\hat{f} (k)$ denotes the $k$-th Fourier mode
   in the Fourier expansion of $f$ on $(x-r_{0}, x+r_{0})$.

By means of interpolation inequality, we obtain
$$
  \|h_y\|_{L^{\alpha+1}(B(r_{0}))}  \leq C \| h
   \|_{\dot{H}^{ \f{3\alpha+1}{2(\alpha+1)}}(B(r_{0}))}  \leq  C
    \| h\|_{L^2 (B(r_{0}))}^{\f{\alpha+3}{4(\alpha+1)}} \|  \partial_{yy}
   h \|_{L^2(B(r_{0}))}^{\f{3\alpha+1}{4(\alpha+1)}} ,
$$
We further conclude by the H\"older inequality that
$$\ba
  \|h_y\|^{\alpha+1}_{L^{\alpha+1}(Q(r_{0}))}  \leq C
    \| h\|_{L^{\infty}L^2 (Q(r_{0}))}^{\f{\alpha+3}{4 }} \int^{t+ r^{4}_{0}}_{t-r^{4}_{0}}\|  \partial_{yy}
   h \|_{L^2(B(r_{0}))}^{\f{3\alpha+1}{4 }}d\tau,\\
   \leq C r_{0}^{\f{7-3\alpha}{2}}
    \| h\|_{L^{\infty}L^2 (Q(r_{0}))}^{\f{\alpha+3}{4 }}  \|  \partial_{yy}
   h \|_{L^2(Q(r_{0}))}^{\f{3\alpha+1}{4 }}.
\ea$$
The proof of this lemma is complete.
\end{proof}
Finally, we recall the regular estimate of biharmonic heat equation and the
general Morrey-Campanato integral characterization of H\"older spaces recently established in
\cite{[OR]}.

\begin{lemma}(\cite{[OR]})\label{lineregular}
Suppose that $0<\vartheta_{2}<1$, $0<\rho $, $H,H_x\in L^2(Q( \rho))$ and that $H$ is a distributional solution to the biharmonic heat equation $H_t=-H_{xxxx}$ in $Q( \rho)$, that is
\begin{equation}\label{limiteq}
\iint_{Q(\rho)} H \, \phi_t dxdt= \iint_{Q(\rho)} H\, \phi_{xxxx}dxdt
\end{equation}
for every $\phi\in C_0^\infty (Q( \vartheta_{2}\rho))$. Then
$$
\|H_x\|_{L^\infty(Q(\vartheta_{2}\rho))}\le C_{2}(\vartheta_{2},\rho)\left(\|H\|_{L^2(Q(\rho))}
+\|H_x\|_{L^2(Q(\rho))}\right)
$$
for some $C_{2}(\vartheta_{2},\rho)>0$.
\end{lemma}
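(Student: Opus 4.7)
The plan is to treat this as a standard interior regularity estimate for the hypoelliptic constant-coefficient parabolic equation $H_t+H_{xxxx}=0$, and to quantify the bound in terms of $\|H\|_{L^2(Q(\rho))}+\|H_x\|_{L^2(Q(\rho))}$ alone. The argument has three layers: a Caccioppoli-type energy inequality, a bootstrap to arbitrarily many derivatives, and a one-dimensional Sobolev embedding at the end.

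First I would establish the Caccioppoli inequality. Pick a smooth space-time cutoff $\eta$ equal to $1$ on a slightly smaller cylinder $Q(r')$ and vanishing outside $Q(r)$, with $\vartheta_2\rho<r'<r<\rho$, and test the equation against $\eta^2 H$. Because $H$ is only distributional with $H,H_x\in L^2$, the identity \eqref{limiteq} does not directly accommodate such a test function, so I would first mollify $H$ in time by a Steklov average $H^\varepsilon$; by linearity and constant coefficients $H^\varepsilon$ still solves the equation, is smooth in $t$, and satisfies $H^\varepsilon\to H$, $H^\varepsilon_x\to H_x$ in $L^2$. Integrating by parts four times in space on $H^\varepsilon$ and absorbing the boundary-type terms yields, after sending $\varepsilon\to 0$,
$$\sup_{t}\int_{B(r')}H^2\,dx+\iint_{Q(r')}H_{xx}^2\,dx\,dt\le C(r-r')^{-4}\bigl(\|H\|_{L^2(Q(r))}^2+\|H_x\|_{L^2(Q(r))}^2\bigr).$$

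Second I would bootstrap. Since the equation has constant coefficients, every spatial translate of $H$ and every finite difference $h^{-1}(H(\cdot+h,\cdot)-H(\cdot,\cdot))$ is again a distributional $L^2$ solution in a slightly smaller cylinder; applying the Caccioppoli estimate to the finite differences and passing $h\to 0$ upgrades $H_x$ to the regularity just obtained for $H$. Iterating on a dyadic sequence $r_k\downarrow\vartheta_2\rho$ produces $\partial_x^k H\in L^\infty_t L^2_x\cap L^2_t H^2_x$ on each intermediate cylinder for every $k\in\mathbb{N}$, with norms controlled, up to geometric factors depending only on $\vartheta_2$ and $\rho$, by $\|H\|_{L^2(Q(\rho))}+\|H_x\|_{L^2(Q(\rho))}$. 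The equation itself is then used to trade each time derivative for four spatial derivatives, so matching bounds are obtained for all mixed derivatives.

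Third I would conclude by one-dimensional Sobolev embedding. In space, $H^1(B(\vartheta_2\rho))\hookrightarrow L^\infty(B(\vartheta_2\rho))$ gives $\|H_x(\cdot,t)\|_{L^\infty_x}\le C(\rho)\bigl(\|H_x(\cdot,t)\|_{L^2_x}+\|H_{xx}(\cdot,t)\|_{L^2_x}\bigr)$; then, viewing $t\mapsto\|H_x(\cdot,t)\|_{L^\infty_x}$ as a scalar function of time on an interval of length $\sim(\vartheta_2\rho)^4$, the one-dimensional embedding $H^1\hookrightarrow L^\infty$ in $t$ converts this to a supremum in $t$ as well, after one further $\partial_t\leftrightarrow\partial_x^4$ trade via the equation. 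Collecting the finite list of $L^2$ norms of derivatives that appear on the right-hand side---each controlled by the bootstrapped Caccioppoli bounds---produces the quantitative estimate with an explicit $C_2(\vartheta_2,\rho)$. The main obstacle is the rigorous justification of the first Caccioppoli step for a merely distributional solution with minimal regularity (one has no a priori access to $H_{xx}$ or $H_t$), which is handled via the Steklov averaging described above; thereafter every step is classical linear parabolic bootstrap.
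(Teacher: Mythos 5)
The paper offers no proof of this lemma; it is quoted directly from \cite{[OR]}, so there is nothing internal to compare against. Your self-contained energy proof (Caccioppoli inequality, spatial difference quotients, trading $\partial_t$ for $-\partial_x^4$ via the equation, one-dimensional Sobolev embedding) is a legitimate alternative route for this constant-coefficient local smoothing estimate, and the bootstrap and embedding layers are correct as described. (A minor simplification: the $\sup_t$ control already present in the energy inequality gives $\|H_x\|_{L^\infty(Q(\vartheta_2\rho))}\le\sup_t\big(\|H_x(\cdot,t)\|_{L^2_x}+\|H_{xx}(\cdot,t)\|_{L^2_x}\big)$ directly, so the final Sobolev embedding in the time variable is not needed.)

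The genuine gap sits exactly at the step you identify as the crux, and the fix you propose does not close it. A Steklov average in time regularizes $H$ only in $t$: the function $H^\varepsilon$ still has just one spatial $L^2$ derivative, so ``integrating by parts four times in space on $H^\varepsilon$'' is undefined, and inserting $\phi=\eta^2H^\varepsilon$ into \eqref{limiteq} would require $\partial_x^4(\eta^2H^\varepsilon)\in L^2$, i.e.\ essentially the regularity you are trying to prove. For a fourth-order equation under the minimal hypothesis $H,H_x\in L^2$, this justification is the substantive content of the lemma, not a formality. The standard repair is to mollify in the space variable as well: since the coefficients are constant, $H_\delta=H*_x\varphi_\delta$ is again a distributional solution on a slightly shrunken cylinder and is $C^\infty$ in $x$ there; the equation then gives $\partial_tH_\delta=-\partial_x^4H_\delta\in L^2_{\mathrm{loc}}$, so $H_\delta$ is absolutely continuous in time with values in $L^2_x$ and the energy identity is fully justified for $H_\delta$. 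The right-hand side of the Caccioppoli inequality passes to the limit because $H_\delta\to H$ and $(H_\delta)_x\to H_x$ in $L^2_{\mathrm{loc}}$, and the left-hand side survives by weak lower semicontinuity. Two smaller points: choose the cutoff as a power $\zeta^2$ of a standard cutoff so that the term $\int H_{xx}(\eta^2)_{xx}H$ can be absorbed with a constant depending only on the gap between radii; and note that, as literally stated, \eqref{limiteq} is only assumed for $\phi$ supported in $Q(\vartheta_2\rho)$, whereas your cutoffs living between $\vartheta_2\rho$ and $\rho$ use the equation on all of $Q(\rho)$ --- this is evidently a typo in the statement rather than a flaw in your argument, but it should be flagged.
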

\begin{lemma}(\cite{[OR]})\label{generalCampanatoemma}
  Let $R\in(0,1)$, $f\in L^1(Q(  R))$ and assume that there exist positive constants $\nu\in(0,1]$, $M>0$, such that
$$
\B(\ffint ~~~|f(y)-f_{_{Q(r) }}|^{p}dyds\B)^{\f1p }\leq Mr^{\nu}
$$
  for any  $z=(x,t)\in Q(R/4)$  and any $0<r<R/ 4 $. Then $f$ is H\"older continuous in $Q(R/4)$, namely, for any $z,w\in Q(R/4)$, $z=(x,t)$, $w=(y,s)$,
  \be\label{fHolder}
  |f(x,t)-f(y,s)|\le cM(|x-y|+|t-s|^{1/4})^\nu.
  \ee
\end{lemma}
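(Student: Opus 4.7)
The plan is to follow the classical Campanato characterization, adapted to the parabolic metric $\rho((x,t),(y,s)):=|x-y|+|t-s|^{1/4}$. The argument proceeds in three steps: (i) show that for every $z\in Q(R/4)$ the averages $f_{Q(z,r)}$ converge as $r\to 0^+$ with a quantitative rate; (ii) compare averages at two nearby points via a common enclosing parabolic cylinder; (iii) assemble the pieces to obtain the pointwise H\"older estimate.

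\textbf{Step 1 (dyadic convergence at a single point).} Fix $z=(x,t)\in Q(R/4)$ and take dyadic radii $r_j=2^{-j}r_0$ with $r_0<R/4$. By H\"older's inequality and the hypothesis,
$$
|f_{Q(z,r_{j+1})}-f_{Q(z,r_j)}|^p \,\leq\, \frac{|Q(r_j)|}{|Q(r_{j+1})|}\cdot\frac{1}{|Q(r_j)|}\int_{Q(z,r_j)}|f-f_{Q(z,r_j)}|^p\, dy\, d\tau \,\leq\, CM^p r_j^{p\nu}.
$$
Since $\sum_j r_j^{\nu}\leq Cr_0^{\nu}$, the sequence $(f_{Q(z,r_j)})_{j\geq 0}$ is Cauchy, so the limit $\widetilde f(z):=\lim_{r\to 0^+}f_{Q(z,r)}$ exists and satisfies $|f_{Q(z,r_0)}-\widetilde f(z)|\leq CMr_0^{\nu}$. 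Lebesgue differentiation identifies $\widetilde f=f$ almost everywhere, so we redefine $f:=\widetilde f$ on a null set.

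\textbf{Step 2 (comparing averages at nearby points).} For $z=(x,t),\,w=(y,s)\in Q(R/4)$ set $d:=|x-y|+|t-s|^{1/4}$ and, assuming $d<R/8$ (the complementary regime is trivial from boundedness of $f_{Q(R/4)}$), choose $r=2d$. A direct check shows $Q(z,r)\subset Q(w,2r)$: spatially, $|x-y|\leq r/2$ yields $B(x,r)\subset B(y,2r)$, and temporally, $|t-s|\leq r^4/16$ yields $(t-r^4,t+r^4)\subset(s-16r^4,s+16r^4)$. Hence
$$
|f_{Q(z,r)}-f_{Q(w,2r)}|^p \,\leq\, \frac{|Q(2r)|}{|Q(r)|}\cdot\frac{1}{|Q(2r)|}\int_{Q(w,2r)}|f-f_{Q(w,2r)}|^p\,dy\,d\tau \,\leq\, CM^p r^{p\nu}.
$$

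\textbf{Step 3 (assembling the H\"older estimate).} Combining the previous two steps through the triangle inequality,
$$
|f(z)-f(w)|\,\leq\,|f(z)-f_{Q(z,r)}|+|f_{Q(z,r)}-f_{Q(w,2r)}|+|f_{Q(w,2r)}-f(w)|\,\leq\, CMd^{\nu},
$$
which is exactly the claimed bound \eqref{fHolder}. The main obstacle is nothing deep but purely the parabolic bookkeeping: the cylinders have $|Q(r)|\asymp r^5$ with the anisotropic $r^4$ weight in time, which forces $|t-s|^{1/4}$ (rather than $|t-s|$ or $|t-s|^{1/2}$) to be the correct time increment in the H\"older distance, and one must verify that the dyadic constants in Step 1 and the containment $Q(z,r)\subset Q(w,2r)$ in Step 2 are uniform in $z,w$. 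Once that is confirmed, the rest is a direct application of the hypothesis, H\"older's inequality, and the Lebesgue differentiation theorem.
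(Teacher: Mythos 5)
The paper does not prove this lemma --- it is quoted from \cite{[OR]}, where the proof is precisely the dyadic Campanato argument you give (telescoping averages over shrinking cylinders, Lebesgue differentiation, comparison of averages at nearby points via an enclosing cylinder), so your approach matches the source and is correct in substance. Two small points of bookkeeping: with $r=2d$ you need $d<R/16$, not $d<R/8$, so that $2r<R/4$ and the hypothesis applies to $Q(w,2r)$; and the complementary regime $d\geq R/16$ is not literally ``trivial from boundedness of $f_{Q(R/4)}$'' (the full cylinder $Q(R/4)$ is not an admissible $Q(z,r)$ with $r<R/4$ for a general $z\in Q(R/4)$) --- it is handled by joining $z$ to $w$ with a uniformly bounded chain of overlapping admissible cylinders, which only enlarges the constant $c$.
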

\subsection{Preliminary   results for the  modified Navier-Stokes equations}
Notice  that if  the pair  $(u (x,t), \Pi(x,t)) $  solve  the
 modified Navier-Stokes system \eqref{mns},
 then the pair $(u_{\lambda}, \Pi_{\lambda}   )$ is also a solution of \eqref{mns}   for any $\lambda\in  \mathbb{R}^{+} ,$ where
\be\label{mnscaling}
 u_{\lambda}=\lambda^{\f{1}{ \alpha-1}}u(\lambda x,\lambda^{2}t),\Pi_{\lambda}=\lambda^{\f{\alpha}{ \alpha-1}}\Pi(\lambda x,\lambda^{2}t).
\ee
Based on this, we introduce some dimensionless quantities for the
 modified Navier-Stokes equations \eqref{mns} as follows
 $$
 \ba
E_{p}(u;\,r)&=\frac{1}{r^{\f{ 5\alpha-5-p}{\alpha-1}}}\iint_{Q(r)}|u(y,t)|^{p}dy dt,
& P_{\f{\alpha+1}{\alpha}}(r) &=\frac{1}{r^{\f{4\alpha-6}{\alpha-1}}}\iint_{Q(r)}{|\Pi(y,t)|^{\frac{\alpha+1}{\alpha}}}dydt,\\
E(u;\,r)&=\f{1}{r^{\f{5-3\alpha}{1-\alpha}}}\iint_{Q(r)}|\nabla u(y,t)|^2dydt,
&E_{\ast}(u;\,r)&=\sup_{-r^2\leq t<0}\f{1}{r^{\f{5-3\alpha}{1-\alpha}}}\int_{B(r)}|u(y,t)|^2dy.
 \ea
$$
{Here  we used the following notation
\begin{align*}
  &B(x;r)=\{y\in\R^3 | |x-y|\leq r\}, && B(r)=B(0;r), && \tilde{B}(r)=B(x_0;r), \\
  &Q(x,t;r)=B(x;r)\times (t-r^2,t), && Q(r)=Q(0,0;r), && \tilde{Q}(r)=Q(x_0,t_0;r).
\end{align*}
}
Now  we  present the definition of the suitable weak solution to the  modified Navier-Stokes equations \eqref{mns}.
\begin{definition}\label{defins}
A  pair  $(u,\,\Pi)$ is called a suitable weak solution to the modified Navier-Stokes equations \eqref{mns}  provided the following conditions are satisfied,
\begin{enumerate}[(1)]
\item $u \in L^{\infty}({-T,0;\,L^{2}(\Omega)})\cap L^{2}(-T,0;\,\dot{H}^{1}(\Omega)),\Pi\in
L^{\f{\alpha+1}{\alpha}}(-T,0;L^{\f{\alpha+1}{\alpha}}(\Omega)).$\label{nsSWS1}
\item$(u,~\Pi)$~solves the modified Navier-Stokes equations \eqref{mns} in ${\Omega\times (-T,0)} $ in the sense of distributions.\label{SWS2}
\item$(u, ~\Pi)$ satisfies the following inequality {for a.e. $t\in[-T,0]$}
\begin{align}
&\int_{\Omega} |u|^{2} \varphi (x,t)
+{2\int^{t}_{-T}\int_{\Omega} \varphi (x, s)
 |\nabla u |^{2}} \nonumber\\ \leq& \int^{t}_{-T}\int_{\Omega} |u|^{2}
(\partial_{t}\varphi +\Delta \varphi )
 + 2\frac{\alpha-1}{\alpha}\int^{t}_{-T}
\int_{\Omega}u\cdot\nabla\varphi  (u_{1}^{\alpha}+u_{2}^{\alpha}+u_{3}^{\alpha})+ 2\int^{t}_{-T}
\int_{\Omega}u\cdot\nabla\varphi  \Pi, \label{locnsns}
\end{align}
 			where non-negative function $\varphi(x,s)\in C_{0}^{\infty}(\Omega\times (-T,0) )$.
\end{enumerate}
\end{definition}
The pressure equation  of the  the  modified Navier-Stokes equations \eqref{mns} reads
$$\Delta\Pi=-\phi \partial_{i}\partial_{j} (u_{j} u_{i}^{\alpha-1}).$$
The usual local technique of the pressure $\Pi$ is to use the following equation
 \be\label{localpressure}
\partial_{i}\partial_{i}(\Pi\phi)=-\phi \partial_{i}\partial_{j} (u_{j} u_{i}^{\alpha-1})
+2\partial_{i}\phi\partial_{i}\Pi+\Pi\partial_{i}\partial_{i}\phi,
\ee
where $\phi$ is a standard smooth cut-off function.
In the spirit of \cite[Lemma 5.4, p.802]{[CKN]},  \cite[Lemma2.4, p.1236]{[WW]}   and    \cite[Lemma 2.2, p.11]{[RWW]}, we  can establish the following  decay estimates of dimensionless quantity  involving pressure via local pressure equation \eqref{localpressure} and the interior estimate of harmonic  function.
  We omit the detail here. We leave this to the interested readers.
\begin{lemma}\label{presurens}
For $0<\mu\leq\f{1}{8}\rho$, there exists an absolute constant $C$  independent of $\mu$ and $\rho$ such that
\be
P_{^{\f{\alpha+1}{\alpha}}}(\mu) \leq C\left(\dfrac{\rho}{\mu}\right)^{\f{ 4\alpha-6}{\alpha-1}}
 E_{^{ {\alpha+1} }}{(u;\rho)}
+C
\left(\f{\mu}{\rho}\right)^{\f{3-\alpha }{ \alpha-1 }}
P_{ {\f{\alpha+1}{\alpha}}}(\rho).
\ee
\end{lemma}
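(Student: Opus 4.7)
The plan is to use the standard local pressure decomposition (a Riesz-transform part plus a harmonic remainder), adapted to the exponent $\tfrac{\alpha+1}{\alpha}$ and to the nonlinear structure $u_j u_i^{\alpha-1}$. Fix a smooth cut-off $\phi\in C_0^\infty(B(\rho/2))$ with $\phi\equiv 1$ on $B(\rho/4)$, and for each fixed $t$ split $\Pi=p_1+p_2$ on $B(\rho/4)$ with
\begin{equation*}
p_1(x,t)=\sum_{i,j}R_iR_j\bigl(\phi\, u_j u_i^{\alpha-1}\bigr)(x,t),\qquad p_2=\Pi-p_1,
\end{equation*}
where $R_i$ are the usual Riesz transforms. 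Then $\Delta p_1=-\partial_i\partial_j(\phi u_j u_i^{\alpha-1})$ on $\R^3$, and by the pressure equation $\Delta\Pi=-\partial_i\partial_j(u_j u_i^{\alpha-1})$ combined with $\phi\equiv 1$ on $B(\rho/4)$, the remainder $p_2$ is harmonic in the spatial variable on $B(\rho/4)$. The extra lower-order terms appearing in \eqref{localpressure} are automatically absorbed into $p_2$ since they are supported away from $B(\rho/4)$.

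The first step is to control $p_1$ by Calderón–Zygmund. Since the Riesz transforms are bounded on $L^{(\alpha+1)/\alpha}$ and $|u_j u_i^{\alpha-1}|\le|u|^\alpha$, I obtain pointwise-in-$t$
\begin{equation*}
\int_{B(\mu)}|p_1|^{\f{\alpha+1}{\alpha}}dy\le \int_{\R^3}|p_1|^{\f{\alpha+1}{\alpha}}dy\le C\int_{B(\rho/2)}|u|^{\alpha+1}dy.
\end{equation*}
Integrating in $t\in(t_0-\mu^2,t_0)\subset(t_0-\rho^2,t_0)$ and dividing by $\mu^{(4\alpha-6)/(\alpha-1)}$ yields exactly $C(\rho/\mu)^{(4\alpha-6)/(\alpha-1)}E_{\alpha+1}(u;\rho)$, the first term of the lemma.

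The second step handles $p_2$ by the interior estimate for harmonic functions. For each $t$, since $p_2$ is harmonic on $B(\rho/4)$ and $\mu\le\rho/8$, the mean value property applied to $|p_2|^{(\alpha+1)/\alpha}$ (which is subharmonic) gives
\begin{equation*}
\|p_2\|_{L^\infty(B(\mu))}^{\f{\alpha+1}{\alpha}}\le \frac{C}{\rho^{3}}\int_{B(\rho/4)}|p_2|^{\f{\alpha+1}{\alpha}}dy\le\frac{C}{\rho^{3}}\Bigl(\int_{B(\rho/2)}|\Pi|^{\f{\alpha+1}{\alpha}}+\int_{B(\rho/2)}|u|^{\alpha+1}\Bigr)dy,
\end{equation*}
using the $p_1$ bound again in the last step. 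Multiplying by $|B(\mu)|\sim\mu^{3}$ and integrating in $t$ over $(-\mu^{2},0)\subset(-\rho^{2},0)$ produces
\begin{equation*}
\iint_{Q(\mu)}|p_2|^{\f{\alpha+1}{\alpha}}dydt\le C\frac{\mu^{3}}{\rho^{3}}\Bigl(\iint_{Q(\rho)}|\Pi|^{\f{\alpha+1}{\alpha}}+\iint_{Q(\rho)}|u|^{\alpha+1}\Bigr)dydt.
\end{equation*}

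The final step is the bookkeeping of exponents. A direct computation gives $3-\tfrac{4\alpha-6}{\alpha-1}=\tfrac{3-\alpha}{\alpha-1}$ and $\tfrac{4\alpha-6}{\alpha-1}-3=\tfrac{\alpha-3}{\alpha-1}$, so dividing the $p_2$ bound by $\mu^{(4\alpha-6)/(\alpha-1)}$ converts the pressure piece into $C(\mu/\rho)^{(3-\alpha)/(\alpha-1)}P_{(\alpha+1)/\alpha}(\rho)$, matching the second term of the lemma, while the residual velocity piece becomes $C(\mu/\rho)^{(3-\alpha)/(\alpha-1)}E_{\alpha+1}(u;\rho)$. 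Because $\mu\le\rho/8$ and the relevant exponent $(3-\alpha)/(\alpha-1)$ is positive in the range $1<\alpha<7/3$, this factor is bounded by $1$, hence bounded by $C(\rho/\mu)^{(4\alpha-6)/(\alpha-1)}$ and can be absorbed into the first term. The only subtle point I anticipate is the rigorous interpretation of $u_j u_i^{\alpha-1}$ and the application of Calderón–Zygmund to it; this is clean once one writes $u_i^{\alpha-1}=|u_i|^{\alpha-2}u_i$ (or, more simply, exploits the pointwise bound $|u_j u_i^{\alpha-1}|\le|u|^\alpha$ and invokes $L^q$-boundedness of $R_iR_j$ with $q=(\alpha+1)/\alpha\in(1,\infty)$).
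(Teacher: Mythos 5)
Your proposal is correct and follows exactly the route the paper intends: the paper omits the proof of Lemma \ref{presurens}, pointing instead to the local pressure equation \eqref{localpressure} together with interior estimates for harmonic functions (in the spirit of Caffarelli--Kohn--Nirenberg), and your Riesz-transform/harmonic-remainder decomposition with Calder\'on--Zygmund on $L^{\frac{\alpha+1}{\alpha}}$ and the sub-mean-value property is precisely that argument, with the exponent bookkeeping done correctly. One small repair in the last step: the chain ``$(\mu/\rho)^{\frac{3-\alpha}{\alpha-1}}\le 1\le C(\rho/\mu)^{\frac{4\alpha-6}{\alpha-1}}$'' fails for $1<\alpha<3/2$, where $\frac{4\alpha-6}{\alpha-1}<0$ and the right-hand factor tends to $0$ as $\mu/\rho\to0$; the absorption is nevertheless valid because $(\mu/\rho)^{\frac{3-\alpha}{\alpha-1}}=(\mu/\rho)^{3}\,(\rho/\mu)^{\frac{4\alpha-6}{\alpha-1}}\le(\rho/\mu)^{\frac{4\alpha-6}{\alpha-1}}$ whenever $\mu\le\rho$, so you should invoke this identity rather than the intermediate bound by $1$.
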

 The interpolation inequality and the Poincar\'e-Sobolev inequality
ensures the following dimensionless quantity estimates.
\begin{lemma}\label{ineqns}
For $0<\mu\leq\f{1}{2}\rho$,~
there is an absolute constant $C$  independent of  $\mu$ and $\rho$,~ such that
 \begin{align}
 E_{\alpha+1}{(u;\mu)} &\leq  C \left(\dfrac{\rho}{\mu}\right)^{\f{4\alpha-6}{\alpha-1}}
E_{\ast}^{\f{5-\alpha }{4 }}{(u;\rho)}E^{\f{3\alpha-3}{4}}{(u;\rho)}
    +C\left(\dfrac{\mu}{\rho}\right)^{\f{3-\alpha}{\alpha-1}}E_{\alpha+1}{(u;\rho)}.\label{ineq1/2}
 \end{align}
\end{lemma}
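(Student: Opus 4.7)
\textbf{Proof plan for Lemma \ref{ineqns}.} The plan is to split $u$ on $B(\rho)$ into its spatial mean $\bar{u}_{\rho}(t) := \frac{1}{|B(\rho)|}\int_{B(\rho)} u(y,t)\,dy$ and the mean-zero remainder $v(y,t) := u(y,t)-\bar{u}_\rho(t)$, and bound $\iint_{Q(\mu)}|u|^{\alpha+1}\,dy\,dt$ by $C\iint_{Q(\mu)}|v|^{\alpha+1}+C\iint_{Q(\mu)}|\bar{u}_\rho|^{\alpha+1}$. The first term will absorb into the $E_\ast^{(5-\alpha)/4}E^{(3\alpha-3)/4}$ contribution via a Gagliardo--Nirenberg interpolation in space followed by H\"older in time; the second term will produce the $E_{\alpha+1}(u;\rho)$ contribution by a trivial Jensen estimate, with the $(\mu/\rho)$--factor coming out of the scaling bookkeeping.

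For the mean-zero piece I would apply the standard Gagliardo--Nirenberg/Poincar\'e--Sobolev inequality in $\mathbb{R}^3$
\[
 \|v(\cdot,t)\|_{L^{\alpha+1}(B(\rho))}^{\alpha+1} \leq C\,\|v(\cdot,t)\|_{L^2(B(\rho))}^{(5-\alpha)/2}\,\|\nabla u(\cdot,t)\|_{L^2(B(\rho))}^{(3\alpha-3)/2},
\]
where the exponents are dictated by scaling ($3/(\alpha+1) = 3/2 - \theta$ gives $\theta = (3\alpha-3)/(2(\alpha+1))$). Pulling out $\sup_t \|v\|_{L^2(B(\rho))}^{(5-\alpha)/2} \leq \sup_t \|u\|_{L^2(B(\rho))}^{(5-\alpha)/2}$ and applying H\"older in time with exponent $4/(3\alpha-3)$ on the remaining $t$-integral over $(-\rho^2,0)$ yields
\[
 \iint_{Q(\mu)} |v|^{\alpha+1} \leq C\,\rho^{(7-3\alpha)/2}\,\bigl(\sup_t\|u\|_{L^2(B(\rho))}^2\bigr)^{(5-\alpha)/4}\,\|\nabla u\|_{L^2(Q(\rho))}^{(3\alpha-3)/2}.
\]
Substituting the definitions $\sup_t\|u\|_{L^2(B(\rho))}^2 = \rho^{(3\alpha-5)/(\alpha-1)}E_\ast(u;\rho)$ and $\|\nabla u\|_{L^2(Q(\rho))}^2 = \rho^{(3\alpha-5)/(\alpha-1)}E(u;\rho)$, after collecting powers of $\rho$ and dividing by $\mu^{(4\alpha-6)/(\alpha-1)}$ one obtains precisely $C(\rho/\mu)^{(4\alpha-6)/(\alpha-1)} E_\ast^{(5-\alpha)/4} E^{(3\alpha-3)/4}$.

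For the mean piece I would use Jensen to write $|\bar{u}_\rho(t)|^{\alpha+1} \leq |B(\rho)|^{-1}\int_{B(\rho)} |u|^{\alpha+1}\,dy$, then bound $\iint_{Q(\mu)}|\bar{u}_\rho|^{\alpha+1} \leq C(\mu^3/\rho^3)\iint_{Q(\rho)}|u|^{\alpha+1}$ since $\mu\leq \rho/2$ guarantees $Q(\mu)\subset Q(\rho)$ in time as well. Dividing by $\mu^{(4\alpha-6)/(\alpha-1)}$ and using $3 - (4\alpha-6)/(\alpha-1) = (3-\alpha)/(\alpha-1)$ together with the identity $\mu^{(3-\alpha)/(\alpha-1)}\rho^{-3} = (\mu/\rho)^{(3-\alpha)/(\alpha-1)}\rho^{-(4\alpha-6)/(\alpha-1)}$ converts this to $C(\mu/\rho)^{(3-\alpha)/(\alpha-1)} E_{\alpha+1}(u;\rho)$.

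The main technical obstacle is keeping the scaling exponents straight: one must verify that the algebraic identity
\[
 \tfrac{(3\alpha-5)(5-\alpha)}{4(\alpha-1)}+\tfrac{(3\alpha-5)(3\alpha-3)}{4(\alpha-1)} + \tfrac{7-3\alpha}{2} \;=\; \tfrac{4\alpha-6}{\alpha-1}
\]
(which is what makes the $\rho$-powers combine correctly) holds. A secondary subtlety is that the H\"older step in time requires $(3\alpha-3)/2 \leq 2$, i.e.\ $\alpha \leq 7/3$, which is precisely the range in which the lemma is claimed; at $\alpha = 7/3$ the H\"older exponent becomes infinite, so the inequality degenerates, and for $\alpha>7/3$ one would need a different interpolation altogether.
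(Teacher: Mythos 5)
Your proof is correct, and it is exactly the argument the paper alludes to (the paper states Lemma \ref{ineqns} without proof, attributing it to ``the interpolation inequality and the Poincar\'e--Sobolev inequality''): the mean/mean-free splitting on $B(\rho)$, the scale-invariant Gagliardo--Nirenberg bound with exponents $\f{5-\alpha}{2}$ and $\f{3\alpha-3}{2}$, H\"older in time, and Jensen for the mean part all check out, as do the exponent identities. The only cosmetic quibble is at $\alpha=7/3$: there the H\"older step does not degenerate but simply yields $\rho^{(7-3\alpha)/2}=1$ (it is only for $\alpha>7/3$ that the time exponent $4/(3\alpha-3)$ drops below $1$ and the argument fails), which is consistent with the paper's restriction $\alpha<7/3$.
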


\section{Partial regularity of generalized surface growth model   }\label{section3}

\subsection{ Regularity criterion at one scale }
This section is devoted to the proof of Theorem \ref{the1.1} via blow up analysis developed in \cite{[LS],[Lin],[OR]}. It suffices to prove the following proposition.

 \begin{prop}\label{keyprop} Let $h$ be a suitable weak
 solutions of equation \eqref{gsg}  in $Q(r)$  with $f\in\mathcal{M}^{m,\f{\alpha+1}{\alpha}}(Q(r))$. There exist $\varepsilon_{01}$, $\theta\in(0,\f12)$ and $R_{1}>0$ such that    if  there holds $r<R_1$
  \be\label{firststep}
\Phi(x,t;r):=\B(\frac{1}{r^{\f{2(3-2\alpha)}{1-\alpha}}}\iint_{Q(r)}|h_y|^{\alpha+1} dyd\tau\B)^{\f{1}{\alpha+1}}<\varepsilon_{01},
  \ee
  then $h$ is  H\"older continuous at point $(x,t)$.
\end{prop}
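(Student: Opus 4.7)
The plan is a blow-up/compactness argument in the spirit of \cite{[Lin],[LS],[OR]}, adapted to the general exponent $\alpha$. The scaling (\ref{eqscaling}) leaves $\Phi$ dimensionless, so after translating $(x,t)$ to the origin and rescaling $r$ to $1$, it suffices to establish a one-step decay: there exist constants $\theta\in(0,1/2)$, $\kappa>0$, $\varepsilon_{01}>0$ (depending on $\alpha$ and the data) such that whenever $\Phi(0,0;1)<\varepsilon_{01}$ and $r<R_{1}$, one has $\Phi(0,0;\theta)\leq\tfrac{1}{2}\theta^{\kappa}\Phi(0,0;1)$. By the scale invariance of $\Phi$ and of the equation, this iterates to geometric decay $\Phi(0,0;\theta^{n})\leq C\theta^{n\kappa}\varepsilon_{01}$, which via Lemma \ref{parapoincare} translates into a Campanato bound for $h-(h)_{Q(\theta^{n})}$ on every scale; Lemma \ref{generalCampanatoemma} then delivers the H\"older continuity at $(x,t)$.

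I argue the one-step decay by contradiction. Suppose it fails: there are sequences $h_{k}$ of suitable weak solutions on $Q(1)$ with forces $f_{k}$ satisfying $\varepsilon_{k}:=\Phi(h_{k};0,0;1)\to 0$ while $\Phi(h_{k};0,0;\theta)>\tfrac{1}{2}\theta^{\kappa}\varepsilon_{k}$ for the chosen $\theta$. Renormalize via $v_{k}:=(h_{k}-(h_{k})_{Q(1)})/\varepsilon_{k}$ so that $\|v_{k,y}\|_{L^{\alpha+1}(Q(1))}=1$; a direct computation gives
\[
\partial_{t}v_{k}+\partial_{y}^{4}v_{k}+\varepsilon_{k}^{\alpha-1}\partial_{y}^{2}\bigl(|v_{k,y}|^{\alpha}\bigr)=\tilde f_{k},
\]
with rescaled forcing $\tilde f_{k}$ made small through the Morrey assumption and $r<R_{1}$. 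The local energy inequality (\ref{loc}) applied to $h_{k}$, together with Lemma \ref{parapoincare}, produces uniform bounds on $v_{k}$ in $L^{\infty}_{t}L^{2}_{x}\cap L^{2}_{t}H^{2}_{x}$ on $Q(3/4)$. Aubin--Lions compactness yields, up to a subsequence, $v_{k}\to v_{\infty}$ strongly in $L^{p}_{\mathrm{loc}}$ and $v_{k,y}\to v_{\infty,y}$ strongly in $L^{\alpha+1}(Q(1/2))$. Since $\varepsilon_{k}^{\alpha-1}\to 0$ and $|v_{k,y}|^{\alpha}$ is uniformly bounded in $L^{(\alpha+1)/\alpha}$, the nonlinear term vanishes distributionally; $\tilde f_{k}$ also tends to zero, so $v_{\infty}$ solves the biharmonic heat equation $\partial_{t}v_{\infty}+\partial_{y}^{4}v_{\infty}=0$ on $Q(3/4)$.

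Lemma \ref{lineregular} then gives $\|v_{\infty,y}\|_{L^{\infty}(Q(1/2))}\leq C$, and hence $\Phi(v_{\infty};0,0;\theta)\leq C\theta^{\delta}$ with $\delta=(5-\beta)/(\alpha+1)>0$, where $\beta=2(3-2\alpha)/(1-\alpha)$ is the dimensional exponent from the scaling table. Passing to the limit through the strong $L^{\alpha+1}$ convergence of $v_{k,y}$ and unwinding the normalization delivers $\Phi(h_{k};0,0;\theta)\leq C\theta^{\delta}\varepsilon_{k}$ for large $k$. Choosing $\kappa<\delta$ and then $\theta$ so small that $C\theta^{\delta}<\tfrac{1}{2}\theta^{\kappa}$ contradicts the standing assumption and closes the one-step decay; the iteration plus Lemmas \ref{parapoincare} and \ref{generalCampanatoemma} finishes the proof. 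The hardest step will be the vanishing of $\varepsilon_{k}^{\alpha-1}\partial_{y}^{2}|v_{k,y}|^{\alpha}$ in distributions and the corresponding strong compactness of $v_{k,y}$ in $L^{\alpha+1}(Q(1/2))$; the restriction $\alpha<7/3$ enters precisely here, both in ensuring $\delta>0$ and in providing the integrability needed to control the rescaled nonlinearity and forcing term in the blow-up limit.
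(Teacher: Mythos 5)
Your proposal is correct and follows essentially the same route as the paper: a blow-up contradiction argument (rescaled solutions with $\varepsilon_k^{\alpha-1}$ in front of the nonlinearity, uniform bounds from the local energy inequality and Lemma \ref{parapoincare}, Aubin--Lions compactness giving strong $L^{\alpha+1}$ convergence of the gradient for $\alpha<7/3$, and Lemma \ref{lineregular} applied to the limiting biharmonic heat flow) yielding a one-step decay with exactly the exponent $\theta^{1/(\alpha-1)}$ you compute, followed by iteration and the Campanato Lemma \ref{generalCampanatoemma}. The only presentational difference is that the paper's decay estimate carries the additive term $\|f\|_{\mathcal{M}^{m,\frac{\alpha+1}{\alpha}}}r^{\beta}$ explicitly and tracks it through an induction over scales (and over nearby base points), which is the bookkeeping your phrase ``made small through the Morrey assumption'' would need to be expanded into.
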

To this end, we need the following decay lemma.
\begin{lemma}\label{lem3.1}
Let $h$ be a suitable weak solutions of \eqref{gsg} in $Q(r)$ with $f\in \mathcal{M}^{m,\f{\alpha+1}{\alpha}}(Q(r))$. For $\theta\in (0,\f12)$, there exist $ \varepsilon_{02}, R_{2}$ such that if  for some $r\leq R_{2}$,
 $$
  \B(\frac{1}{r^{\f{2(3-2\alpha)}{1-\alpha}}}\iint_{Q(r)}|h_y|^{\alpha+1}dyd\tau \B)^{\f{1}{\alpha+1}}+ \| f\|_{\mathcal{M}^{m,\f{\alpha+1}{\alpha}}(Q(r))}r^{\beta}<\varepsilon_{02},
$$
where $m>\f{5(\alpha-1)}{3\alpha-2}$ and $m\geq\f{  (\alpha+1)}{ \alpha }$, $0<\beta<\f{3\alpha-2}{\alpha-1}-\f5m$,
  then there holds the decay estimate
\be\label{d1}
  \Phi(x,t;\theta r)\leq C_{3} \theta^{\f{1}{\alpha-1}} (\Phi(x, t; r) +\| f\|_{\mathcal{M}^{m,\f{\alpha+1}{\alpha}}(Q(r))}r^{\beta}).
\ee

\end{lemma}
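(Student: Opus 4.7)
The plan is a compactness/blow-up contradiction in the spirit of Lin, Ladyzhenskaya--Seregin, and Oza\'nski--Robinson \cite{[Lin],[LS],[OR]}. Assume toward contradiction that, for some fixed $\theta\in (0,1/2)$, there are suitable weak solutions $h_k$ of \eqref{gsg} with forces $f_k$ on $Q(r_k)$, centered at the origin after translation, such that
\[
\Psi_k := \Phi(0,0;r_k) + \|f_k\|_{\mathcal{M}^{m,(\alpha+1)/\alpha}(Q(r_k))}\, r_k^{\beta} \longrightarrow 0, \qquad \Phi(0,0;\theta r_k) \ge C_3\, \theta^{1/(\alpha-1)}\, \Psi_k,
\]
where $C_3$ will be chosen larger than an absolute constant produced below. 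Using the natural scaling \eqref{eqscaling}, normalize to the unit cylinder by
\[
v_k(y,\tau) := \frac{1}{\Psi_k}\, r_k^{(\alpha-2)/(1-\alpha)}\, h_k(r_k y, r_k^{4}\tau),
\]
with the corresponding rescaled force $F_k$; then $v_k$ is a suitable weak solution on $Q(1)$ of
\[
\partial_\tau v_k + \partial_{yyyy} v_k + \Psi_k^{\alpha-1}\partial_{yy}|v_{k,y}|^{\alpha} = F_k,
\]
and $\iint_{Q(1)}|v_{k,y}|^{\alpha+1}\le 1$. A direct computation of how the parabolic Morrey norm scales under \eqref{eqscaling}, together with the strict hypothesis $\beta<(3\alpha-2)/(\alpha-1)-5/m$, yields $\|F_k\|_{\mathcal{M}^{m,(\alpha+1)/\alpha}(Q(1))}\le r_k^{\eta}\to 0$ for some $\eta>0$.

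The second step is to extract a limit. Since $\Phi$ and the suitable-weak-solution inequalities are invariant under adding a constant to $h_k$, I would first subtract from $v_k$ its $Q(1)$-mean, and then feed the local energy inequality \eqref{loc} with a cutoff supported in $Q(1)$, absorbing the cubic-type pieces into the dissipation via the interpolation estimates of Lemma \ref{inter}; the factor $\Psi_k^{\alpha-1}$ in front of the nonlinear terms, and the condition $\alpha<7/3$ that makes the Gagliardo--Nirenberg exponent in \eqref{inter1} admissible, are exactly what make this work. Combined with the parabolic Poincar\'e inequality \eqref{poin3} of Lemma \ref{parapoincare} applied to $v_k$ minus its mean, this produces uniform bounds of $v_k$ in $L^\infty L^2\cap L^2 H^2$ on every compactly contained subcylinder, and the equation itself bounds $\partial_\tau v_k$ in a negative Sobolev norm. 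Aubin--Lions then yields a subsequence $v_k\to v_\infty$ strongly, and interpolating between the $L^{\alpha+1}$ bound on $v_{k,y}$ and the $L^2H^2$ bound on $v_k$ upgrades this to strong convergence of $v_{k,y}$ in $L^{\alpha+1}(Q(1/2))$. In the distributional limit, $\Psi_k^{\alpha-1}\partial_{yy}|v_{k,y}|^{\alpha}\to 0$ and $F_k\to 0$, so $v_\infty$ solves the biharmonic heat equation $\partial_\tau v_\infty + \partial_{yyyy} v_\infty = 0$ on $Q(1/2)$.

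Finally, Lemma \ref{lineregular} gives $\|v_{\infty,y}\|_{L^\infty(Q(\theta))}\le C$ with $C$ depending only on $\|v_\infty\|_{L^2(Q(1/2))}+\|v_{\infty,y}\|_{L^2(Q(1/2))}$, which is uniformly bounded. A direct calculation of the exponents in the definition of $\Phi$ then gives
\[
\Phi(v_\infty;\theta) \le C\, \theta^{\, 1/(\alpha-1)},
\]
the power $1/(\alpha-1) = 5/(\alpha+1)-(4\alpha-6)/((\alpha-1)(\alpha+1))$ being forced by the scaling. Strong convergence in $L^{\alpha+1}(Q(\theta))$ upgrades this to $\Phi(v_k;\theta)\le 2C\,\theta^{1/(\alpha-1)}$ for $k$ large. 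Undoing the rescaling, $\Phi(0,0;\theta r_k) = \Psi_k\,\Phi(v_k;\theta)\le 2C\, \theta^{1/(\alpha-1)}\Psi_k$, contradicting the standing assumption as soon as $C_3>2C$.

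The main obstacle is the compactness step: one has to upgrade the $L^{\alpha+1}$ bound on $v_{k,y}$ to strong convergence on a full cylinder, so that both the nonlinearity disappears in the limit equation and $\Phi(v_k;\theta)$ actually converges to $\Phi(v_\infty;\theta)$. This forces a careful reading of \eqref{loc}, in which the smallness $\Psi_k^{\alpha-1}$ tames an otherwise supercritical quasilinear term, together with Lemma \ref{parapoincare} (since only derivatives of $v_k$ are a priori controlled) and the interpolation of Lemma \ref{inter} to bridge the spaces provided by Aubin--Lions. Once compactness is established, the remaining ingredients --- linear regularity from Lemma \ref{lineregular} and the distributional passage to the limit --- are routine.
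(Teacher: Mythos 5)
Your proposal is correct and follows essentially the same route as the paper: a blow-up/contradiction argument with the scaling \eqref{eqscaling}, normalization by the total quantity $\Psi_k$ (the paper's $\varepsilon_k$), mean subtraction plus the local energy inequality, Lemma \ref{parapoincare} and interpolation to get uniform $L^\infty L^2\cap L^2H^2$ bounds, Aubin--Lions to pass to a limit solving the biharmonic heat equation (the restriction $\alpha<7/3$ entering exactly where you place it, in upgrading to strong $L^{\alpha+1}$ convergence of the gradient), and Lemma \ref{lineregular} to produce the decay rate $\theta^{1/(\alpha-1)}$ that contradicts the choice of $C_3$. The exponent bookkeeping in your final step matches the paper's \eqref{d4} and \eqref{d11}.
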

\begin{proof} We will prove this proposition by contradiction. Suppose there exist sequences $r_{k}\rightarrow0,\{\varepsilon_{k}\},{(x_k,t_k)}$  , $ f_{k}\in \mathcal{M}^{m,\f{\alpha+1}{\alpha}}(Q(r_{k})) $, and a sequence of suitable weak solution $\{h_{k}\}$
 such that
$$\ba
\Phi(x_k,t_{k};r_k)=&\B(\frac{1}{r_{k}^{\f{2(3-2\alpha)}{1-\alpha}}} \iint_{Q(x_k,t_{k};r_k)}|\partial_x h_{k}|^{\alpha+1}dxdt\B)^{\f{1}{\alpha+1}}+\| f_{k}\|_{\mathcal{M}^{m,\f{\alpha+1}{\alpha}}(Q(r_{k}))}r_{k}^{\beta} \\ =&\varepsilon_k\rightarrow0, \text{as} ~~k\rightarrow\infty,
\ea$$
and
$$
\B(\frac{1}{ (\theta r_k)^{  \f{2(3-2\alpha)}{1-\alpha} }}\iint_{Q(x_k,t_{k};\theta r_k)}|\partial_x h_{k}|^{\alpha+1}dxdt\B)^{\f{1}{\alpha+1}}\geq C_{3} \theta^{\f{ 1}{\alpha-1}}\varepsilon_k.
$$
To {proceed} further, we set
 $$\ba
  & H_k(x,t)= \varepsilon_k^{-1}r_{k}^{\f{\alpha-2}{1-\alpha}}\B[h_{k} \left(x_k+ x \,r_k,t_k + t\,r_k^4\right) -\f{\iint_{Q(x_k,t_{k};\vartheta_{3}r_{k})} h_{k}dxdt}{\iint _{Q(x_k,t_{k};\vartheta_{3}r_{k} )} dxdt}\B ],\\
 &  g_{k}(x,t)= \varepsilon_k^{-1} r_{k}^{\f{2-3\alpha}{1-\alpha}}f(x_k+ x \,r_k,t_k + t\,r_k^4).
\ea$$
As a consequence, there holds
\begin{align}
&\B(\iint_{\dot{Q}(1)}|\partial_xH_k|^{\alpha+1}dxds\B)^{\f{1}{\alpha+1}}+\varepsilon_{k}^{-1}{\| f_k\|}_{\mathcal{M}^{m,\f{\alpha+1}{\alpha}}(Q(r_{k}))}r_{             k}^{\beta} =1,\label{d2}\\
&  \ffgintkv ~~~  H_k dxds=0, \label{d3}\\
&\B(\iint_{\dot{Q}(\theta) }|\partial_xH_k|^{\alpha+1}dxds\B)^{\f{1}{\alpha+1}}\ge C_{3}
 \theta ^{\f{5}{\alpha+1}},\label{d4}\\
&  \iint_{\dot{Q}(1)} \left( H_{k} \, \phi_\tau - \partial_{xx}H_{k} \phi_{xx} - \varepsilon_{k}^{\alpha-1}|\partial_x H_{k}|^\alpha \phi_{xx} dxds \right)\\&= - \iint_{\dot{Q}(1)} g_{k}\phi(s)dxds, \quad  \phi \in C_0^\infty (\dot{Q}(1)).
\label{d5}
\end{align}
 and $h_k $ satisfies the local energy inequality
\be\ba\label{d6}
&\frac{1}{2}\int_{\dot{B}(1)} |H_k(t)|^2 \phi(t)dx +\int_{-1}^t \int_{\dot{B}(1)} (\partial_{xx} H_k)^2\phi \\\leq& \int_{-1}^t \int_{\dot{B}(1)} \B[\frac{1}{2}(\phi_t-\phi_{xxxx})(H_k)^2 +2(\partial_x H_k)^2\phi_{xx} \\ &
-\frac{2\alpha+1}{\alpha+1}\varepsilon_k^{ {\alpha-1} }|\partial_x H_k|^\alpha H_k\phi_x-\varepsilon_k^{ {\alpha-1} }|\partial_x H_k|^{\alpha }H_k\phi_{xx}+g_{k} H_{k}\phi\B].
\ea
\ee
By virtue of \eqref{d2}, we infer that
\be\label{d13}
\|g_{k}\|_{\mathcal{M}^{m, \f{\alpha+1}{\alpha}}(\dot{Q}(1))}\leq \varepsilon^{-1}r_{k}^{\f{2-3\alpha}{1-\alpha}}r_{k}^{-5\f{1}{m}}\|f_{k}\|_{\mathcal{M}^{m, \f{\alpha+1}{\alpha}}({Q}(r_{k}))}\leq C r_{k}^{\f{2-3\alpha}{1-\alpha}-\f5m-\beta}\leq C.
\ee
Thanks to \eqref{d13} and \eqref{d2},  Poincar\'e inequality \eqref{poin3} with $\vartheta_{1}=7/8$ and \eqref{d3} with $\vartheta_{3}=7/8$, we have
\be\label{d7}
\iint_{\dot{Q}_{(\f{7}{8})}} |H_k |^{\alpha+1} \leq C_{1}.
\ee
 Abusing  notation slightly, we denote the subsequences  of $\{H_{k}\}$ by $\{H_{k}\}$.
Now, we choose a subsequences  of $\{H_{k}\}$ such that
\be
H_{k } \rightharpoonup H,\quad \partial_x H_{k } \rightharpoonup H_x \qquad \text{ in }L^{\alpha+1} (\dot{Q}_{(\f{7}{8})})\text{ as } k\to \infty.
\ee
Let $k\rightarrow\infty$ in \eqref{d5},
$$   \iint_{\dot{Q}(1)}   (H \, \phi_\tau - \partial_{xx}H \phi_{xx})  dxdt =0,
 \quad \phi \in C_0^\infty (\dot{Q} (\f{7}{8}) )$$
 With the help of Lemma  \ref{lineregular} and \eqref{d7}, we get
 $$\ba
 \| H_x\|_{L^\infty(\dot{Q} {(\f{3}{4})})} &\leq C_{2} \left( \| H \|_{L^2 (\dot{Q }(\f{7}{8}))} + \| H_x \|_{L^2 (\dot{Q }(\f{7}{8}))}  \right) \\
&\leq C(C_{2},C_{1}).\ea$$
As a consequence,
\be\label{d11}
\f{1}{\theta^{5}}\iint_{\dot{Q}(\theta) }|\partial_x H|^{\alpha+1}\leq C(C_{2},C_{1}).
 \ee
Assume for a while we have proved that
\be \label{d12}
\partial_x H_{k } \to H_x ~\text{in} ~  L^{\alpha+1} (\dot{Q}(1/2)) ~\text{with} ~\alpha<\f{7}{3}.
\ee
 Taking the limit in \eqref{d4},  using \eqref{d12}  and \eqref{d11}, we deduce that
\be C_{3}\leq\B(\f{1}{\theta^{5}}\iint_{\dot{Q}(\theta)
}|\partial_x H_{k}|^{\alpha+1}\B)^{\alpha+1}\leq C(C_{2},C_{1}).
\ee
We take $C_{3}=2C(C_{2},C_{1})$ to get a contradiction.

It remains to show   \eqref{d12} we have assumed. To this end, we require the uniform  bound of the right hand side of  \eqref{d4}  to apply Aubin-Lions lemma.
This together with \eqref{d2} and \eqref{d4}, \eqref{d13}  leads to
\be\label{d8}
\|H_k \|_{L^\infty (  L^2(\dot{Q}(3/4)))} + \| \partial_{xx} H_k \|_{L^2(\dot{Q}(3/4))} \leq C,
\ee
which turns out that
\be\label{d9}
\|\partial_{x}H_{k}\|_{L^{\f{10}{3}} (\dot{Q}(3/4))}\leq C,
\ee
and
\be\label{d10}
\| H_{k} \|_{L^{3} ( H^{\f43}(\dot{Q}(3/4)))}\leq \|H_{k}\|^{\f13}_{L^{\infty}(L^{2}(\dot{Q}(3/4))))}\|H_{k}\|^{\f23}_{L^{2}(H^{2}(\dot{Q}(3/4))))}\leq C.
\ee
It follows from \eqref{d5} that
\be\ba\label{d111}
&\left| \iint_{ Q(3/4)}\partial_t H_k \,\phi dxdt\right|\\
=& \left| -\iint_{Q(3/4)} \partial_{xx} H_k \, \phi_{xx}dxdt - \varepsilon_k^{\alpha-1}  \iint_{\dot{Q}(3/4)}|\partial_x h_k  |^{ \alpha } \phi_{xx}+g_{k}\phi dxdt\right|\\
\leq& ( \| \partial_{xx} H_k  \|_{L^{\f{\alpha+1}{\alpha} } (\dot{Q}(3/4))} +\| \partial_{x} H_k  \|^{\alpha}_{L^{\alpha+1} (\dot{Q}(3/4))}  +\|g_{k}\|_{\mathcal{M}^{m, \f{\alpha+1}{\alpha}}(\dot{Q}(1))})\\&\times\| \phi \|_{L^{\alpha+1} ( W^{2,\alpha+1}(\dot{Q}(3/4)))} \\
\leq& C \| \phi \|_{L^{\alpha+1} ( W^{2,\alpha+1}(\dot{Q}(3/4)))},
\ea
\ee
for all $\phi \in C_0^\infty (\dot{Q}(3/4))$.\\
Therefore,  there holds $\|\partial_t H_k\|_{L^{\f{\alpha+1}{\alpha } ( I_{3/4} ;(W^{2,\alpha+1}(\dot{Q}(3/4)))^*)}} \leq C $.
 Since
\be
H^{\f43}\subset H^{\f16}\subset (W^{2,\alpha+1})^{\ast}
\ee
 Aubin--Lions   lemma allows us to select  a subsequence of $\{H_k\}$ that converges in $L^{3} ( H^{7/6}(\dot{Q}(3/4)))$. Sobolev embedding theorem
 helps us to get that $\partial_x H_k $ converges in $L^{3} (\dot{Q}(3/4))$ .\\
 From \eqref{d9}, we infer that
 $$\partial_x H_{k_n} \to H_x ~\text{in} ~  L^{\alpha+1} (\dot{Q}(1/2)) ~\text{with} ~\alpha<\f73.$$
Thus, we give the proof of assertion \eqref{d12}.
The proof of this lemma is completed.
 \end{proof}
At this stage,
   iterating the   above lemma and using  the general parabolic  Campanato Lemma \ref{generalCampanatoemma} allow  us to prove Proposition \ref{keyprop}.
\begin{proof}[Proof of Proposition \ref{keyprop}]
We set
$R_{1}=min\B\{R_{2},\big[\f{\varepsilon_{02}}{4 \| f\|_{ \mathcal{M}^{m,\f{\alpha+1}{\alpha}} (Q(r))} }\big]^{ \frac{1}{\beta} }\B\}$, therefore,
$\forall (x_{0},t_{0})\in Q(x,t;\rho)$ with $\rho\leq\f{r}{2}$, there holds
$$
\B(\frac{1}{(\f{r}{2})^{\f{2(3-2\alpha)}{1-\alpha}}}\iint_{Q(x_0,t_0;\f{r}{2})}|h_x|^{\alpha+1} \B)^{\f{1}{\alpha+1}}\leq  \B(\frac{4^{\f{3-2\alpha}{1-\alpha}}}{(r)^{\f{2(3-2\alpha)}{1-\alpha}}}\iint_{Q(x_0,t_0;\f{r}{2})}|h_x|^{\alpha+1} \B)^{\f{1}{\alpha+1}}\leq 4^{\f{3-2\alpha}{1-\alpha^{2}}}\varepsilon_{01}.
$$
We choose   $\varepsilon_{01}$ sufficiently small such that
\be\ba\label{3.20}
&\B(\frac{1}{(\f{r}{2})^{\f{2(3-2\alpha)}{1-\alpha}}}\iint_{Q(x_0,t_0;\f{r}{2})}|h_x|^{\alpha+1} \B)^{\f{1}{\alpha+1}}+ \| f\|_{{\mathcal{M}^{m,\f{\alpha+1}{\alpha}}}(\tilde{Q}(r/2))} \B(\f{r}{2}\B)^{\beta}<\f12\varepsilon_{02}.
\ea\ee
We claim that,
for
\be
0<\beta_{1}<\f{1}{\alpha-1} ~~ \text{and}~~  \beta_{1}\leq\beta,
\ee
there holds
\be\label{claims}\left\{\ba
&\Phi(x_{0},t_{0};\f{r}{2}\theta^{k-1})+\| f\|_{M^{\f{5}{4-\gamma},\f{\alpha+1}{\alpha}}(\tilde{Q}(\f{r}{2}\theta^{k-1}))} (\f{r}{2}\theta^{k-1})^{\beta}\leq\varepsilon_{02} ;\\
&\Phi(x_{0},t_{0};\f{r}{2}\theta^{k})\leq \theta^{k\beta_{1}}\B[\Phi(x_{0},t_{0};\f{r}{2})+\| f\|_{{\mathcal{M}^{m,\f{\alpha+1}{\alpha}}}(\tilde{Q}(r/2))} (\f{r}{2})^{\beta}\B].
\ea  \right.\ee
We will prove this assertion  by induction arguments.   The case $k=1$ is a direct application
of Lemma \ref{lem3.1}.   To
illustrate the   ideas in the proof of \eqref{claims} for $k=k$, we  also present the detail of \eqref{claims} for $k=2$. Then, we assume that there holds  \eqref{claims} with $k=k-1$  and    prove \eqref{claims} for $k=k$.

To proceed further, we take $\theta$   sufficiently small such that
\be\label{3.23}
2C_{3}\theta^{\f{1}{2}(\f{1}{\alpha-1}-\beta_1)}<1.
\ee
First, with \eqref{3.20} in hand, we can invoke  Lemma \ref{lem3.1} to obtain
 $$\ba \Phi(x_{0},t_{0};\f{r}{2} \theta )\leq & C_{3}\theta^{\f{1}{\alpha-1}}\B[\Phi(x_{0},t_{0};\f{r}{2})+\| f\|_{ \mathcal{M}^{m,\f{\alpha+1}{\alpha}} (\tilde{Q}(r/2))} (\f{r}{2})^{\beta}\B]\\
=&C_{3}\theta^{\f{1}{2}(\f{1}{\alpha-1}-\beta_1)}\theta^{\f{1}{2}(\f{1}{\alpha-1}+\beta_1)}
\B[\Phi(x_{0},t_{0};\f{r}{2})+\| f\|_{ \mathcal{M}^{m,\f{\alpha+1}{\alpha}} (\tilde{Q}(r/2))} (\f{r}{2})^{\beta}\B]\\
\leq& \theta^{\beta_1}\B[\Phi(x_{0},t_{0};\f{r}{2})+\| f\|_{ \mathcal{M}^{m,\f{\alpha+1}{\alpha}} (\tilde{Q}(r/2))} (\f{r}{2})^{\beta}\B],
\ea$$
where we have used \eqref{3.23}.  This together with \eqref{3.20} implies that
we have proved \eqref{claims} with $k=1.$

Second, we use $ \eqref{claims}_{2}$ with $k=1 $ and \eqref{3.20}
to get
$$\ba
\Phi(x_{0},t_{0};\f{r}{2}\theta )+\| f\|_{ \mathcal{M}^{m,\f{\alpha+1}{\alpha}} (\tilde{Q}(\f{r}{2}\theta ))} (\f{r}{2}\theta )^{\beta}\leq&
 \theta^{\beta_1}\B[\Phi(x_{0},t_{0};\f{r}{2})+\| f\|_{ \mathcal{M}^{m,\f{\alpha+1}{\alpha}} (\tilde{Q}(r/2))} (\f{r}{2})^{\beta}\B]\\&+\| f\|_{ \mathcal{M}^{m,\f{\alpha+1}{\alpha} }(\tilde{Q}(\f{r}{2}\theta ))} (\f{r}{2}\theta )^{\beta}\\
\leq &2\theta^{\beta_1}\B[\Phi(x_{0},t_{0};\f{r}{2})+\| f\|_{ \mathcal{M}^{m,\f{\alpha+1}{\alpha}} (\tilde{Q}(r/2))} (\f{r}{2})^{\beta}\B]\\
\leq &\theta^{\beta_1}\varepsilon_{02}.
\ea$$
By means of  Lemma \ref{lem3.1}  again and \eqref{3.23}, we conclude that
\be\ba\label{3.24}
\Phi(x_{0},t_{0};\f{r}{2}\theta^{2})\leq& C_{3}\theta^{\f{1}{\alpha-1}}\B[\Phi(x_{0},t_{0};\f{r\theta}{2})+\| f\|_{ \mathcal{M}^{m,\f{\alpha+1}{\alpha}} (\tilde{Q}(r/2))} (\f{r\theta}{2})^{\beta}\B]\\
\leq& C_{3}\theta^{\f{1}{2}(\f{1}{\alpha-1}-\beta_1)}\theta^{\f{1}{2}(\f{1}{\alpha-1}+\beta_1)}\B\{\theta^{\beta_1}\big[\Phi(x_{0},t_{0};\f{r}{2})+\| f\|_{ \mathcal{M}^{m,\f{\alpha+1}{\alpha}} (\tilde{Q}(r/2))} (\f{r}{2})^{\beta}\big]\\&+\| f\|_{ \mathcal{M}^{m,\f{\alpha+1}{\alpha}} (\tilde{Q}(r/2))} (\f{r\theta}{2})^{\beta}\B\}\\
\leq& C_{3}\theta^{\f{1}{2}(\f{1}{\alpha-1}-\beta_1)}
\theta^{\f{1}{2}(\f{1}{\alpha-1}+\beta_1)}\theta^{\beta_1}
2\B[\Phi(x_{0},t_{0};\f{r}{2})+\| f\|_{ \mathcal{M}^{m,\f{\alpha+1}{\alpha}} (\tilde{Q}(r/2))} (\f{r}{2})^{\beta}\B]\\
\leq& \theta^{2\beta_1}
\B[\Phi(x_{0},t_{0};\f{r}{2})+\| f\|_{ \mathcal{M}^{m,\f{\alpha+1}{\alpha}} (\tilde{Q}(r/2))} (\f{r}{2})^{\beta}\B].
\ea\ee
As a consequence, we show \eqref{claims} with $k=2.$

Third, we assume that \eqref{claims} is valid for $k=k-1$.  With the help of $ \eqref{claims}_{2}$ with $k=k-1 $ and \eqref{3.20}, we infer that
$$\ba
&\Phi(x_{0},t_{0};\f{r}{2}\theta^{k-1} )+\| f\|_{ \mathcal{M}^{m,\f{\alpha+1}{\alpha}} (\tilde{Q}(\f{r}{2}\theta^{k-1}  ))} (\f{r}{2}\theta^{k-1}  )^{\beta}\\
\leq&
 \theta^{\beta_1(k-1)}\B[\Phi(x_{0},t_{0};\f{r}{2} )+\| f\|_{ \mathcal{M}^{m,\f{\alpha+1}{\alpha}} (\tilde{Q}(r/2))} (\f{r}{2}\theta^{k-1})^{\beta}\B]\\&+\| f\|_{ \mathcal{M}^{m,\f{\alpha+1}{\alpha}} (\tilde{Q}(\f{r}{2}\theta ))} (\f{r}{2}\theta^{k-1} )^{\beta}\\
\leq &2\theta^{\beta_1(k-1)}\B[\Phi(x_{0},t_{0};\f{r}{2})+\| f\|_{ \mathcal{M}^{m,\f{\alpha+1}{\alpha}} (\tilde{Q}(r/2))} (\f{r}{2})^{\beta}\B]\\
\leq &\theta^{\beta_1(k-1)}\varepsilon_{02}.
\ea$$
Arguing the same manner as \eqref{3.24}, we have
$$\ba
\Phi(x_{0},t_{0};\f{r}{2}\theta^{k})\leq& C_{3}\theta^{\f{1}{\alpha-1}}\B[\Phi(x_{0},t_{0};\f{r\theta^{k-1}}{2})+\| f\|_{ \mathcal{M}^{m,\f{\alpha+1}{\alpha}} (\tilde{Q}( r/2))} (\f{r\theta^{k-1}}{2})^{\beta}\B]\\
\leq& C_{3}\theta^{\f{1}{2}(\f{1}{\alpha-1}-\beta_1)}
\theta^{\f{1}{2}(\f{1}{\alpha-1}+\beta_1)}
\B\{\theta^{(k-1)\beta_1}\B[\Phi(x_{0},t_{0};\f{r}{2})+\| f\|_{ \mathcal{M}^{m,\f{\alpha+1}{\alpha}} (\tilde{Q}(r/2))} (\f{r}{2})^{\beta}\B]\\&+\| f\|_{ \mathcal{M}^{m,\f{\alpha+1}{\alpha}} (\tilde{Q}(r/2))} (\f{r\theta^{k-1}}{2})^{\beta}\B\}\\
\leq& C_{3}\theta^{\f{1}{2}(\f{1}{\alpha-1}-\beta_1)}\theta^{\f{1}{2}(\f{1}{\alpha-1}+\beta_1)}\theta^{\beta_1(k-1)}
2\B[\Phi(x_{0},t_{0};\f{r}{2})+\| f\|_{ \mathcal{M}^{m,\f{\alpha+1}{\alpha}} (\tilde{Q}(r/2))} (\f{r}{2})^{\beta}\B]\\
\leq& \theta^{k\beta_1}
\B[\Phi(x_{0},t_{0};\f{r}{2})+\| f\|_{ \mathcal{M}^{m,\f{\alpha+1}{\alpha}} (\tilde{Q}(r/2))} (\f{r}{2})^{\beta}\B].
\ea$$
Collecting the above estimates, we derive that \eqref{claims} for $k=k$. Hence, it is   shown that  \eqref{claims} is valid.

Now, it follows from $\eqref{claims}_{2}$  that,
$\forall \rho\in (0,\f{r}{2}), $
\be\label{3.25}
\Phi(x_{0},t_{0};{\rho})\leq C\f{\rho^{\beta_{1}}}{(\f{r}{2})^{\beta_{1}}}\B[\Phi(x_{0},t_{0};\f{r}{2})+\| f\|_{ \mathcal{M}^{m,\f{\alpha+1}{\alpha}} (\tilde{Q}(r/2))} \big(\f{r}{2}\big)^{\beta}\B].
\ee
For any $\varrho\in (0,\f{r}{4})$, there exist $\rho$ such $\varrho<2\varrho<\f{r}{2}$. Therefore, employing  Lemma \ref{parapoincare} with $\vartheta_{1}=\f12,$ \eqref{3.25},  and $\alpha>1$, we observe that
 \be\ba\label{3.26}
&\B (\ffgintrtietarou ~~~ \B|h(t)-\ffgintrtietarou~~ hdxdt\B|^{\alpha+1}dxdt\B)^{\f{1}{\alpha+1}}\\
\leq&  C(2\varrho)^{\f{(\alpha-2)}{\alpha-1}}
\B\{\f{1}{(2\varrho)^{\f{2(3-2\alpha)}{1-\alpha}}}
\iint_{\tilde{Q}  (2\varrho) }|h_{x}|^{\alpha+1}dxdt
+\B[\f{1}{(2\varrho)^{\f{2(3-2\alpha)}{1-\alpha}}}
\iint_{\tilde{Q}(2\varrho)}|h_{x}|^{\alpha+1}dxdt\B]^{\alpha}\B\}
^{\f{1}{\alpha+1}}\\&+C(2\varrho)^{4-\f{5}{m}} \| f\|_{\mathcal{M}^{m,\f{\alpha+1}{\alpha}}(\tilde{Q}(2\varrho))} \\
\leq&  C(2\varrho)^{\f{(\alpha-2)}{\alpha-1}}
\f{\rho^{\beta_{1}}}{(\f{r}{2})^{\beta_{1}}}
\B[\Phi(x_{0},t_{0};\f{r}{2})+\| f\|_{ \mathcal{M}^{m,\f{\alpha+1}{\alpha}} (\tilde{Q}(r/2))} \big(\f{r}{2}\big)^{\beta}\B]+C(2\varrho)^{4-\f{5}{m}} \| f\|_{\mathcal{M}^{m,\f{\alpha+1}{\alpha}}(\tilde{Q}(r/2))} \\
\leq&  C \varrho ^{\beta_{2}},
 \ea\ee
where $\beta_{2}=\min\{\f{(\alpha-2)}{\alpha-1}+\beta_{1},4-\f{5}{m}\}$.
If $\alpha\geq2$, it is clear that $\beta_{2}>0$. If $1<\alpha<2$, the choice of $\beta_{1} $ and $\beta_{2}$ (that is, $\f{1}{\alpha-1}+\f{(\alpha-2)}{\alpha-1}>0$ and $\f{3\alpha-2}{\alpha-1}-\f5m +\f{(\alpha-2)}{\alpha-1}{>0}$) also ensures that $\beta_{2}>0$.  From  \eqref{3.26} and Lemma \ref{generalCampanatoemma}, we complete the proof of this Proposition \ref{keyprop}.
\end{proof}

 \subsection{Regularity criterion at  infinite scales } \label{section4}
\begin{proof}[Proof of Theorem \ref{the1.2}]
The condition   \eqref{con2} yields
that there exists  a positive constant $ r_{0} $ such that
\be\label{condition1}
E(r)\leq \varepsilon_{02},~\text{for any} ~~r\leq r_{0}.
\ee
It follows from the local energy inequality \eqref{loc} and the H\"older inequality that
\be\ba\label{4.1}
&E_{\ast}(r)+E(r)\\\leq &C\B[D_{2}(2r)+E_{2}(2r)+E_{\alpha+1}(2r)+E^{\f{\alpha}{\alpha+1}}_{\alpha+1}(2r) D^{\f{1}{\alpha+1}}_{\alpha+1}(2r)+r^{{\f{-2\alpha^{2}+6\alpha-2}{\alpha^{2}-1}}}\|f\|_{L^{\f{\alpha+1}{\alpha}}(Q(2r))}D_{\alpha+1}^{\f{1}{\alpha+1}}(2r) \B]\\
\leq &C\B[D^{\f{2}{\alpha+1}}_{\alpha+1}(2r)+E^{\f{2}{\alpha+1}}_{\alpha+1}(2r)+E_{\alpha+1}(2r)+E^{\f{\alpha}{\alpha+1}}_{\alpha+1}(2r) D^{\f{1}{\alpha+1}}_{\alpha+1}(2r)+r^{ \f{2-3\alpha}{1-\alpha}-\f5m }\|f\|_{\mathcal{M}^{m,\f{\alpha+1}{\alpha}}}D_{\alpha+1}^{\f{1}{\alpha+1}}(2r) \B].
\ea\ee
Since $h-C$ is also the solution of \eqref{gsg}, we can replace $h$ by $h-h_{_{Q(r) }}$  or $ h-h_{_{Q(r),\sigma}}$  in \eqref{4.1}. We will take $h-h_{_{Q(r) }}$ in \eqref{4.1}.  It is worth remarking that the following proof  works for both  $h-h_{_{Q(r) }}$  and $ h-h_{_{Q(r),\sigma}}$. Hence, we reformulate inequality \eqref{4.1} as
\be\ba\label{4.2}
&\tilde{E}_{\ast}(r)+E(r)\\
\leq & C\B[\tilde{D}^{\f{2}{\alpha+1}}_{\alpha+1}(r)+E^{\f{2}{\alpha+1}}_{\alpha+1}(r)+E_{\alpha+1}(2r)+E^{\f{\alpha}{\alpha+1}}_{\alpha+1}(2r) \tilde{D}^{\f{1}{\alpha+1}}_{\alpha+1}(2r)\\&+  r^{ \f{{3\alpha-2}}{\alpha -1}-\f{5}{m}  } \|f\|_{\mathcal{M}^{m,\f{\alpha+1}{\alpha}}(Q(2r))}\tilde{D}_{\alpha+1}^{\f{1}{\alpha+1}}(2r)\B]\\
 \ea\ee
 Taking advantage of Lemma \ref{parapoincare}, we have
\be\label{4.3}
\tilde{D}_{\alpha+1}(r) \leq E_{\alpha+1}(4r)+E^{\alpha}_{\alpha+1}(4r)+r^{(\alpha+1)
(\f{2-3\alpha}{1-\alpha}-\f5m)}\|f\|^{\alpha+1}_{\mathcal{M}^{m,1}(Q(2r))}.
\ee
 From the interpolation inequality \eqref{inter1}, we see that
 \be\label{4.4}
 E_{\alpha+1}(r)\leq \tilde{E}_{\ast}^{\f{\alpha+3}{8 }}(r)E^{\f{3\alpha+1}{8 }}(r), E_{\alpha+1}^{\f{2}{\alpha+1}}(r)\leq \tilde{E}_{\ast}^{\f{ (\alpha+3)}{4(\alpha+1) }}(r)E^{\f{ (3\alpha+1)}{4(\alpha+1) }}(r).
 \ee
 Combining \eqref{4.3} and \eqref{4.4}, we know that
\be\ba\label{4.5}
 \tilde{D}_{\alpha+1}^{\f{1}{\alpha+1}}(r) \leq& C E_{\alpha+1}^{\f{1}{\alpha+1}}(4r)+CE^{\f{ \alpha}{\alpha+1}}_{\alpha+1}(4r)+Cr^{(\f{2-3\alpha}{1-\alpha}-\f5m)}\|f\|_{\mathcal{M}^{m,1}(Q(2r))}\\
\leq& C\big[\tilde{E}_{\ast}^{\f{\alpha+3}{8 }}(2r)E^{\f{3\alpha+1}{8 }}(2r)\big]^{\f{1}{\alpha+1}}+C\big[E_{\ast}^{\f{\alpha+3}{8 }}(2r)E^{\f{3\alpha+1}{8 }}(2r)\big]^{\f{ \alpha}{\alpha+1}} \\&+Cr^{(\f{2-3\alpha}{1-\alpha}-\f5m)}\|f\|_{\mathcal{M}^{m,1}(Q(2r))}\\
\leq& C [E_{\ast}^{\f{\alpha+3}{8(\alpha+1) }}(2r)E^{\f{3\alpha+1}{8 (\alpha+1) }}(2r) + C \tilde{E}_{\ast}^{\f{\alpha(\alpha+3)}{8(\alpha+1) }}(2r)E^{\f{\alpha(3\alpha+1)}{8(\alpha+1) }}(2r) \\&+Cr^{(\f{2-3\alpha}{1-\alpha}-\f5m)}\|f\|_{\mathcal{M}^{m,1}(Q(2r))}],
 \ea\ee
 and
  \be\label{4.66}
 \tilde{D}^{\f{2}{\alpha+1}}_{\alpha+1}(r)\leq C \B[\tilde{E}_{\ast}^{\f{\alpha+3}{4(\alpha+1) }}(2r)E^{\f{3\alpha+1}{4 (\alpha+1) }}(2r) + E_{\ast}^{\f{\alpha(\alpha+3)}{4(\alpha+1) }}(2r)E^{\f{\alpha(3\alpha+1)}{4(\alpha+1) }}(2r) +r^{2(\f{2-3\alpha}{1-\alpha}-\f5m)}
 \|f\|^{2}_{\mathcal{M}^{m,1}(Q(2r))}\B].
 \ee
As a consequence, we arrive at
  \be\ba\label{4.6}
 E^{\f{\alpha}{\alpha+1}}_{\alpha+1}(2r) \tilde{D}^{\f{1}{\alpha+1}}_{\alpha+1}(2r)\leq&
C \tilde{E}_{\ast}^{\f{(\alpha+3) }{8  }}(2r)E^{\f{(3\alpha+1) }{8   }}(2r) + C\tilde{E}_{\ast}^{\f{\alpha(\alpha+3)}{4(\alpha+1) }}(2r)E^{\f{\alpha(3\alpha+1)}{4(\alpha+1) }}(2r) \\&+Cr^{(\f{2-3\alpha}{1-\alpha}-\f5m)}\|f\|_{M^{m,1}}\tilde{E}_{\ast}^{\f{\alpha(\alpha+3)}{8(\alpha+1) }}(r)E^{\f{\alpha(3\alpha+1)}{8(\alpha+1) }}(r).\ea\ee
Plugging {\eqref{4.4}, \eqref{4.5}}, \eqref{4.66} and \eqref{4.6} into \eqref{4.2}, we end up with
\be\ba\label{4.7}
&\tilde{E}_{\ast}(r)+E(r)\\\leq &C\B[\tilde{D}^{\f{2}{\alpha+1}}_{\alpha+1}(r)+E^{\f{2}{\alpha+1}}_{\alpha+1}(r)+E_{\alpha+1}(2r)+E^{\f{\alpha}{\alpha+1}}_{\alpha+1}(2r) \tilde{D}^{\f{1}{\alpha+1}}_{\alpha+1}(2r)+r^{{ \f{{3\alpha-2}}{\alpha -1}-\f{5}{m}  }}\|f\|\tilde{D}_{\alpha+1}^{\f{1}{\alpha+1}}(2r)\B]\\
\leq& C \B[\tilde{E}_{\ast}^{\f{\alpha+3}{4(\alpha+1) }}(2r)E^{\f{3\alpha+1}{4 (\alpha+1) }}(2r) + \tilde{E}_{\ast}^{\f{\alpha(\alpha+3)}{4(\alpha+1) }}(2r)E^{\f{\alpha(3\alpha+1)}{4(\alpha+1) }}(2r) +r^{2(\f{2-3\alpha}{1-\alpha}-\f5m)}\|f\|^{2}_{\mathcal{M}^{m,1}(Q(2r))}\B]\\
&+C\tilde{E}_{\ast}^{\f{ (\alpha+3)}{4(\alpha+1) }}(r)E^{\f{ (3\alpha+1)}{4(\alpha+1) }}(2r)+C\tilde{E}_{\ast}^{\f{\alpha+3}{8 }}(2r)E^{\f{3\alpha+1}{8 }}(2r)\\
&+
C \tilde{E}_{\ast}^{\f{(\alpha+3) }{8  }}(2r)E^{\f{(3\alpha+1) }{8   }}(2r) + C\tilde{E}_{\ast}^{\f{\alpha(\alpha+3)}{4(\alpha+1) }}(2r)E^{\f{\alpha(3\alpha+1)}{4(\alpha+1) }}(2r) \\&+Cr^{(\f{2-3\alpha}{1-\alpha}-\f5m)}\|f\|_{\mathcal{M}^{m,1}(Q(2r))}
\tilde{E}_{\ast}^{\f{\alpha(\alpha+3)}{8(\alpha+1) }}(r)E^{\f{\alpha(3\alpha+1)}{8(\alpha+1) }}(r)\\
&+Cr^{ \f{2-3\alpha}{1-\alpha}-\f5m }\|f\|_{\mathcal{M}^{m,\f{\alpha+1}{\alpha}}(Q(2r))}
 \tilde{E}_{\ast}^{\f{\alpha+3}{8(\alpha+1) }}(2r)E^{\f{3\alpha+1}{8 (\alpha+1) }}(2r) \\&+C r^{ \f{2-3\alpha}{1-\alpha}-\f5m  }\|f\|_{\mathcal{M}^{m,\f{\alpha+1}{\alpha}}(Q(2r))}\tilde{E}_{\ast}^{\f{\alpha(\alpha+3)}{8(\alpha+1) }}(2r)E^{\f{\alpha(3\alpha+1)}{8(\alpha+1) }}(2r) \\&+Cr^{2(\f{2-3\alpha}{1-\alpha}-\f5m)} \|f\|_{\mathcal{M}^{m,\f{\alpha+1}{\alpha}}(Q(2r))}\|f\|_{\mathcal{M}^{m,1}(Q(2r))}.
 \ea\ee
 Since $\f{\alpha+1}{\alpha}>\f{5(\alpha-1)}{3\alpha-2}$ with $1<\alpha<7/3$, there holds $r^{ \f{2-3\alpha}{1-\alpha}-\f5m  } \|f\|_{_{\mathcal{M}^{m,1}(Q(2r))}}\rightarrow0$
 as $r\rightarrow0$.

Notice that $1<\alpha<7/3$  guarantees $\max\{\f{\alpha+3}{4(\alpha+1)},\f{\alpha(\alpha+3)}{4(\alpha+1) }, \f{(\alpha+3) }{8  },\f{\alpha+3}{8(\alpha+1) },\f{\alpha(\alpha+3)}{8(\alpha+1) }\}<1$, in addition,
in view of \eqref{4.7},    the iteration method as \cite{[CKN]} together with  \eqref{condition1} helps us  to   get the smallness of $\tilde{E}_{\ast}(r)+E(r)$ for $0<r<r_{2}<r_{1} $.
The interpolation inequality implies  the smallness of $ E_{\alpha+1}(r)$. We conclude the proof by Theorem \ref{the1.1}.
\end{proof}
\subsection{Improvement of  regularity criterion }
\begin{proof}[Proof of Theorem \ref{the1.3}]
First, we focus on the proof of \eqref{impro1}.
We rewrite \eqref{inter1} with $\alpha=2$  and $f=0$ in Lemma 2.2 as \be\label{4.11}
 E_{3}(4r)\leq C \tilde{E}_{\ast}^{\f58}(4r)E^{\f18}(4r)E^{\f78}_{\f{12}{7},2}(4r).
\ee
With \eqref{4.11} in hand, arguing in the same manner as \eqref{4.5}, we observe that
\be\label{4.12}
\tilde{D}^{\f13}_{3}(2r) \leq E^{\f13}_{3}(4r)+E^{\f23}_{3}(4r)
\leq  (E_{\ast}^{\f58}(4r)E^{\f18}(4r)E^{\f78}_{\f{12}{7},2}(4r))^{\f13}+ (E_{\ast}^{\f58}(4r)E^{\f18}(4r)E^{\f78}_{\f{12}{7},2}(4r))^{\f23}
\ee
From \eqref{4.2} with $\alpha=2$ and $f=0$, we conclude that
\be\ba\label{4.13}
\tilde{E}_{\ast}(r)+E(r)\leq &C\B[\tilde{D}^{\f{2}{3}}_{3}(2r)+E^{\f{2}{3}}_{3}(2r)
+E_{3}(2r)+E^{\f{2}{3}}_{3}(2r) \tilde{D}^{\f{1}{3}}_{3}(2r) \B].
\ea\ee
Plugging \eqref{4.11} and \eqref{4.12} into \eqref{4.13},  we infer that
\be\ba
&\tilde{E}_{\ast}(r)+E(r)\\
\leq &C \B[E_{\ast}^{\f58}(4r)E^{\f18}(4r)E^{\f78}_{\f{12}{7},2}(4r)\B]^{\f23}+ C \B[E_{\ast}^{\f58}(4r)E^{\f18}(4r)E^{\f78}_{\f{12}{7},2}(4r)\B]^{\f43} \\&
+
\B[E_{\ast}^{\f58}(4r)E^{\f18}(4r)E^{\f78}_{\f{12}{7},2}(4r)\B]^{\f23}+C
  E_{\ast}^{\f58}(4r)E^{\f18}(4r)E^{\f78}_{\f{12}{7},2}(4r) \\&
+C\B[E_{\ast}^{\f58}(4r)E^{\f18}(4r)E^{\f78}_{\f{12}{7},2}(4r)\B]^{\f23}\B[ (E_{\ast}^{\f58}(4r)E^{\f18}(4r)E^{\f78}_{\f{12}{7},2}(4r))^{\f13}+ (E_{\ast}^{\f58}(4r)E^{\f18}(4r)E^{\f78}_{\f{12}{7},2}(4r))^{\f23}\B]\wred{.}
\ea\ee
Set $F(r)=\tilde{E}_{\ast}(r)+E(r)$, hence,
$$\ba
F(r)\leq &F^{\f12}(4r) E_{\f{12}{7},2}^{\f{7}{12}} (4r)+F (4r) E_{\f{12}{7},2}^{\f{7}{6}} (4r)
+F^{\f34} (4r) E_{\f{12}{7},2}^{\f{7}{6}} (4r)\\
&+F^{\f12}(4r) E_{\f{12}{7},2}^{\f{7}{12}} (4r)
\B[F^{\f{1}{4}}(4r) E_{\f{12}{7},2}^{\f{7}{24}} (4r)+F^{\f12}(4r) E_{\f{12}{7},2}^{\f{7}{12}} (4r)\B]\\
\leq & CF^{\f12}(4r) E_{\f{12}{7},2}^{\f{7}{12}} (4r)+CF (4r) E_{\f{12}{7},2}^{\f{7}{6}} (4r)
+CF^{\f34} (4r) E_{\f{12}{7},2}^{\f{7}{6}} (4r).
\ea$$
This together with  iteration method mentioned above implies the smallness of $F(r)$ under the smallness of ${E_{\f{12}{7},2}}(4r)$ . Combining this with
Theorem \ref{the1.1}, we complete the proof of this part.

We turn our attention to the proof of \eqref{impro2}.
From Lemma \ref{inter},  we see that
\be\ba\label{4.9}
&D_{3}(r)\leq C D^{{\f{11}{5}}}_{\infty,1}(r)E^{\f25}(r)+C D^{3}_{\infty,1}(r),\\
&E_{3}(r)\leq C D_{\infty,1}(r)E (r)+C D^{3}_{\infty,1}(r).
\ea\ee
Substituting \eqref{4.1} with $\alpha=2$ and $f=0$ into \eqref{4.9}, we infer that
$$\ba
&D_{3}(r)\leq C D^{{\f{11}{5}}}_{\infty,1}(r)[D^{\f{2}{3}}_{3}(2r)
+E^{\f{2}{3}}_{3}(2r)+E_{3}(2r)+E^{\f{2}{3}}_{3}(2r) D^{\f{1}{3}}_{3}(2r)]^{\f25}(r)+C D^{3}_{\infty,1}(r)\\
&E_{3}(r)\leq C D_{\infty,1}(r)[D^{\f{2}{3}}_{3}(2r)
+E^{\f{2}{3}}_{3}(2r)+E_{3}(2r)+E^{\f{2}{3}}_{3}(2r) D^{\f{1}{3}}_{3}(2r)]+C D^{3}_{\infty,1}(r).
\ea$$
Hence,
\be\ba\label{4.10}
D_{3}(r)+E_{3}(r)\leq& C D^{{\f{11}{5}}}_{\infty,1}(r)[D^{\f{2}{3}}_{3}(2r)
+E^{\f{2}{3}}_{3}(2r)+E_{3}(2r)+E^{\f{2}{3}}_{3}(2r) D^{\f{1}{3}}_{3}(2r)]^{\f25} \\
&+ C D_{\infty,1}(r)[D^{\f{2}{3}}_{3}(2r)
+E^{\f{2}{3}}_{3}(2r)+E_{3}(2r)+E^{\f{2}{3}}_{3}(2r) D^{\f{1}{3}}_{3}(2r)]+C D^{3}_{\infty,1}(r).
\ea\ee
Before going further, we write
$$G(\mu)=D_{3}(\mu)+E_{3}(\mu).$$
As a consequence,  we get
$$G(r)\leq  C D^{{\f{11}{5}}}_{\infty,1}(r)[G^{\f{2}{3}}(2r)
 +G_{3}(2r) ]^{\f25}  + C D_{\infty,1}(r)[G^{\f{2}{3}}(2r)
 +G_{3}(2r) ]+C D^{3}_{\infty,1}(r).$$
 An iteration argument leads to the smallness of $G(r)$ under the  the smallness of $D^{3}_{\infty,1}(r)$. With this in hand, Theorem \ref{the1.1} entails us to  achieve
 the proof of this case.
\end{proof}
\section{Partial regularity of  the modified Navier-Stokes equations   }
In this section,  we study the partial regularity of suitable weak solutions of  the modified Navier-Stokes equations \eqref{mns}.
In step 1,  in the spirit of blow up technique in  \cite{[HLW],[GWZ],[HW],[Wang]},  we  prove  Theorem \ref{the1.1ns} by the fractional integration theorem \cite{[OLeary],[Kukavica0]}  involving parabolic Morrey spaces.
  Step 2 is   devoted to  optimal Hausdorff dimension of possible singular point set $\mathcal{S}$ in the equations \eqref{mns}.
\subsection{ Proof of Theorem \ref{the1.1ns} }
\begin{lemma}
There exist $\varepsilon_{1}>0$  and $\theta\in (0,\f12)$ such that if $(u,\Pi)$ is a suitable weak
solutions of the modified Navier-Stokes equations in $Q(1)$ satisfying
$$
\B (\iint_{Q(1)} |u|^{\alpha+1}dxdt\B)^{\f{1}{\alpha+1}}
+\B(\iint_{Q (1)}
|\Pi |^{\f{\alpha+1}{\alpha}}dxdt \B)^{\f{\alpha}{\alpha+1}}  \leq    \varepsilon_{1},$$
then
$$\ba
&\B(\theta^{\f{6-4\alpha}{\alpha-1}}\iint_{Q(\theta)} |u|^{\alpha+1}dxdt\B)^{\f{1}{\alpha+1}}
+\B(\theta^{\f{6-4\alpha}{\alpha-1}}\iint_{Q (\theta)}
|\Pi |^{\f{\alpha+1}{\alpha}}dxdt \B)^{\f{\alpha}{\alpha+1}}  \\
\leq&     \f12\B (\iint_{Q(1)} |u|^{\alpha+1}dxdt\B)^{\f{1}{\alpha+1}}
+\B(\iint_{Q (1)}
|\Pi |^{\f{\alpha+1}{\alpha}}dxdt \B)^{\f{\alpha}{\alpha+1}}.
\ea$$
\end{lemma}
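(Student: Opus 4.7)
The plan is a blow-up contradiction argument, adapting the Lin / Ladyzenskaja--Seregin scheme to the modified Navier--Stokes system. Fix $\theta\in(0,1/2)$, to be chosen at the end. Suppose the conclusion fails for every $\varepsilon_1>0$; then there exist $\varepsilon_k\downarrow 0$ and suitable weak solutions $(u_k,\Pi_k)$ on $Q(1)$ with
$$A_k:=\left(\iint_{Q(1)}|u_k|^{\alpha+1}\right)^{1/(\alpha+1)}+\left(\iint_{Q(1)}|\Pi_k|^{(\alpha+1)/\alpha}\right)^{\alpha/(\alpha+1)}\leq \varepsilon_k,$$
but the corresponding scaled quantity at $Q(\theta)$ exceeds $\tfrac12 A_k$. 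Set $U_k:=u_k/A_k$, $P_k:=\Pi_k/A_k$; the sum of the two normalized norms on $Q(1)$ equals $1$, and $(U_k,P_k)$ solves
$$\partial_t U_k-\Delta U_k+A_k^{\alpha-1}U_{k,j}\partial_j\bigl((U_{k,i})^{\alpha-1}\bigr)+\partial_i P_k=0,\qquad \operatorname{div}U_k=0,$$
together with the correspondingly rescaled local energy inequality.

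Next I would establish uniform interior bounds. The rescaled local energy inequality, combined with Lemma \ref{ineqns} and Lemma \ref{presurens}, gives (with the nonlinear contributions absorbed because $A_k^{\alpha-1}\to 0$)
$$\|U_k\|_{L^\infty L^2(Q(3/4))}+\|\nabla U_k\|_{L^2(Q(3/4))}\leq C.$$
From the equation one then reads off a uniform bound for $\partial_t U_k$ in an appropriate negative-order parabolic Sobolev space on $Q(3/4)$, in which both the nonlinear term and $\nabla P_k$ are controlled (the restriction $\alpha<7/3$ is precisely what keeps the nonlinear term manageable under the available embedding). An Aubin--Lions argument then supplies strong convergence $U_k\to U$ in $L^{\alpha+1}(Q(3/4))$ along a subsequence, while $P_k\rightharpoonup P$ weakly in $L^{(\alpha+1)/\alpha}$. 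Since $A_k^{\alpha-1}\to 0$, passing to the limit in the distributional equation yields that $(U,P)$ solves the linear Stokes system
$$\partial_t U-\Delta U+\nabla P=0,\qquad \operatorname{div}U=0,$$
on $Q(3/4)$; taking divergence, $\Delta P(\cdot,t)=0$ spatially, so $P(\cdot,t)$ is harmonic for a.e. $t$.

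Interior regularity for the Stokes system, together with harmonicity of $P(\cdot,t)$, gives $\|U\|_{L^\infty(Q(1/2))}\leq C$ and a Lipschitz bound for $P(\cdot,t)-\overline{P(t)}$ on small balls. Counting powers of $\theta$,
$$\theta^{\frac{6-4\alpha}{\alpha-1}}\iint_{Q(\theta)}|U|^{\alpha+1}\,dydt\leq C\,\theta^{\frac{\alpha+1}{\alpha-1}},$$
and analogously, after subtracting the time-varying mean,
$$\theta^{\frac{6-4\alpha}{\alpha-1}}\iint_{Q(\theta)}\bigl|P-\overline{P(t)}\bigr|^{(\alpha+1)/\alpha}\,dydt\leq C\,\theta^{\frac{\alpha+1}{\alpha-1}+\frac{\alpha+1}{\alpha}}.$$
The discrepancy between the scale-$\theta$ mean and the scale-$1$ mean of $P$ is handled via Lemma \ref{presurens}. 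Both exponents are strictly positive, so choosing $\theta$ small enough forces the corresponding constants below $1/2$; combined with the established strong convergence of $(U_k,P_k)$, this contradicts the assumed failure of the conclusion at every scale.

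The main obstacle I anticipate is producing the uniform negative-Sobolev bound on $\partial_t U_k$ that feeds Aubin--Lions: the nonlinear term $A_k^{\alpha-1}U_{k,j}\partial_j(U_{k,i})^{\alpha-1}$ must be estimated through the $L^\infty L^2\cap L^2 H^1$ bound and parabolic interpolation, and the pressure gradient needs to be controlled via the local pressure equation \eqref{localpressure}. It is precisely the restriction $1<\alpha<7/3$ that makes this interpolation close; the remaining steps are standard linear-equation regularity plus bookkeeping of the scaling exponent $(6-4\alpha)/(\alpha-1)$.
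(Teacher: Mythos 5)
Your overall architecture — blow-up by contradiction, normalization by the total norm $A_k$, the $\varepsilon_k^{\alpha-1}$ prefactor on the rescaled nonlinearity, uniform $L^\infty L^2\cap L^2H^1$ bounds from the local energy inequality, Aubin--Lions giving strong $L^{\alpha+1}$ convergence of the velocities (this is exactly where $\alpha<7/3$, i.e. $\alpha+1<10/3$, enters), and Stokes regularity for the limit yielding the decay $\theta^{\frac{6-4\alpha}{\alpha-1}}\iint_{Q(\theta)}|U|^{\alpha+1}\le C\theta^{\frac{\alpha+1}{\alpha-1}}$ — coincides with the paper's proof. The gap is in the pressure. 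You record, correctly, that $P_k\rightharpoonup P$ only \emph{weakly} in $L^{\frac{\alpha+1}{\alpha}}$, but your concluding step invokes ``the established strong convergence of $(U_k,P_k)$.'' Strong convergence of $P_k$ was never established and is not available from your argument; and weak convergence is useless here because you need an \emph{upper} bound on $\theta^{\frac{6-4\alpha}{\alpha-1}}\iint_{Q(\theta)}|P_k|^{\frac{\alpha+1}{\alpha}}$ at finite $k$, whereas lower semicontinuity of the norm under weak convergence runs in the opposite direction. Moreover, the lemma's conclusion involves the full pressure, not the mean-subtracted pressure, so a Lipschitz bound on $P(\cdot,t)-\overline{P(t)}$ for the limit does not by itself close the argument.

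The paper resolves this by never passing to the limit in the pressure term: it applies the decay estimate of Lemma \ref{presurens} (adapted to the rescaled pressure equation $\Delta\pi_k=-\varepsilon_{1k}^{\alpha-1}\partial_i\partial_j[(v_j)_k(v_i)_k^{\alpha-1}]$, which carries the extra factor $\varepsilon_{1k}^{\alpha-1}$) directly to $\pi_k$ at each fixed $k$, obtaining
\begin{equation*}
\theta^{\frac{6-4\alpha}{\alpha-1}}\iint_{Q(\theta)}|\pi_k|^{\frac{\alpha+1}{\alpha}}\,dxdt
\le C\,\varepsilon_{1k}^{\frac{\alpha^{2}-1}{\alpha}}\,\theta^{\frac{6-4\alpha}{\alpha-1}}
+C\,\theta^{\frac{3-\alpha}{\alpha-1}},
\end{equation*}
where the first term is killed by sending $k\to\infty$ \emph{after} $\theta$ is fixed (so the negative power of $\theta$ is harmless) and the second carries a positive power of $\theta$. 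Only the velocity term is treated by compactness and the limiting Stokes system. To repair your proof you should replace the limit-passage for the pressure by exactly this finite-$k$ application of the local pressure decomposition; your use of Lemma \ref{presurens} merely ``for the discrepancy between means'' does not supply the required bound on the full pressure at scale $\theta$.
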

\begin{proof}
We suppose that the statement is invalid. Then, for any $\theta\in(0,\f12)$, there exists  a sequence    of
suitable weak  solutions  of the modified Navier-Stokes  equations and a sequence $\varepsilon_{1k}$
such that
$$\ba
&\B (\iint_{Q(1)} |u_{k}|^{\alpha+1}dxdt\B)^{\f{1}{\alpha+1}}
+\B(\iint_{Q (1)}
|\Pi_{k} |^{\f{\alpha+1}{\alpha}}dxdt \B)^{\f{\alpha}{\alpha+1}} =    \varepsilon_{1k}\rightarrow0\; \text{as}\; k\rightarrow\infty,\\
&\B (\theta^{\f{6-4\alpha}{\alpha-1}}\iint_{ Q(\theta) } |u|^{\alpha+1}dxdt\B)^{\f{1}{\alpha+1}}
+\B(\theta^{\f{6-4\alpha}{\alpha-1}}\iint_{ Q(\theta) }
|\Pi |^{\f{\alpha+1}{\alpha}}dxdt \B)^{\f{\alpha}{\alpha+1}} >\f12 \varepsilon_{1k}.
\ea$$
Before going further, we write
$$
(v_{i})_{k}=\varepsilon_{1k}^{-1}(u_{i})_{k},\pi_{k}=\varepsilon_{1k}^{-1}\Pi_{k}, i=1,2,3.
$$
Hence, we get
\be\ba\label{mdns4.2}
&\B (\iint_{Q(1)} |v_{k}|^{\alpha+1}dxdt\B)^{\f{1}{\alpha+1}}
+\B(\iint_{Q (1)}
|\pi_{k} |^{\f{\alpha+1}{\alpha}}dxdt \B)^{\f{\alpha}{\alpha+1}} = 1,\\
&\B (\theta^{\f{6-4\alpha}{\alpha-1}}\iint_{ Q(\theta) } |v_{k}|^{\alpha+1}dxdt\B)^{\f{1}{\alpha+1}}
+\B(\theta^{\f{6-4\alpha}{\alpha-1}}\iint_{ Q(\theta) }
|\pi_{k} |^{\f{\alpha+1}{\alpha}}dxdt \B)^{\f{\alpha}{\alpha+1}} >\f12.
\ea\ee
In addition, there holds
\be\label{mdns4.3}
\iint_{Q(1)}\B[-v_{k}\partial_{\tau}\phi-v_{k}\Delta\phi-\varepsilon_{1k}^{\alpha-1}
 \sum_{i=1}^{3}(v_{i})_{k}^{\alpha-1}v\cdot\nabla\phi_i -
\pi_{k} \mathrm{div}\phi \B] dxd\tau=0,
\ee
and the local energy inequality below
\be\label{mdns4.4}\ba
&\int|v_{k}|^{2} \varphi  dx+2 \int_{-T}^{t} \int_{B(1)}|\nabla v_{k}|^{2} \varphi \\\leq&
 \int_{-T}^{t} \int_{B(1)} |v_{k}|^{2}(\varphi_{t}+\Delta\varphi )+2\varepsilon_{1k}^{\alpha-1}\f{\alpha-1}{\alpha}
 \int_{-T}^{t} \int_{B(1)} v_{k}\cdot\nabla \varphi\B[(v_{1})^{\alpha}_{k}+(v_{2})^{\alpha}_{k}+(v_{3})^{\alpha}_{k}\B]\\&
+2 \int_{-T}^{t} \int_{B(1)} v_{k}\cdot\nabla \varphi\pi_{k} dxdt.
\ea\ee
We conclude by \eqref{mdns4.2} that there exist the subsequences of $v_{k}$ and $\pi_{k}$ satisfying
$$v_{k}\rightharpoonup v ~~\text{in}~~ L^{\alpha+1}(Q(1)), \pi_{k}\rightharpoonup \pi~~ \text{in} ~~ L^{\f{\alpha+1}{\alpha}}(Q(1)).$$
In addition,  the lower semicontinuity ensures that
\be\label{mdns4.5}
\|v\|_{L^{\alpha+1}(Q(1))}+\|\pi\|_{L^{\f{\alpha+1}{\alpha}}(Q(1))}\leq C.
\ee
Combining the Holder inequality and \eqref{mdns4.4}, we arrive at
\be\label{mdns4.6}
\|v_{k}\|_{L^{\infty}(L^{2})(Q(\f78))}+\|\nabla v_{k}\|_{L^{2}(L^{2})(Q(\f78))}\leq C.\ee
As a consequence, taking $k\rightarrow\infty$ in \eqref{mdns4.3}, we infer that
$$\iint_{Q(1)}\B[-v\partial_{\tau}\phi -v\Delta\phi-\pi\text{div}\phi\B] dxdt=0.$$
Thanks to the regularity of the Stokes equations and \eqref{mdns4.5}, we know that
$$
\theta^{\f{6-4\alpha}{\alpha-1}}\iint_{Q(\theta)} |v|^{\alpha+1}dxdt\leq
\theta^{5+\f{6-4\alpha}{\alpha-1}}\iint_{Q(1)} |v|^{\alpha+1}dxdt \leq C
\theta^{ \f{1+\alpha}{\alpha-1}}. $$
Since the  pressure equations of $\pi_{k}$ is given by
$$
\Delta\pi_{k}=- \varepsilon_{1k}^{\alpha-1} \partial_{i}\partial_{j}
[(v_{j})_{k}(v_{i})_{k}^{\alpha-1}],
$$
  by a slightly modification the derivation of  Lemma  \ref{presurens}  and \eqref{mdns4.2},  we  further get
$$\ba
&\theta^{\f{6-4\alpha}{\alpha-1}}\iint_{Q (\theta)}
| \pi_{k}  |^{\f{\alpha+1}{\alpha}}dxdt\\
\leq& C \varepsilon_{1k}^{\f{\alpha^{2}-1}{\alpha}}\theta^{\f{6-4\alpha}{\alpha-1}} \iint_{Q(1)} |v_{k}|^{\alpha+1}dxdt+C \theta^{\f{3-\alpha}{\alpha-1}}\iint_{Q (1)}
| \pi_{k}  |^{\f{\alpha+1}{\alpha}}dxdt\\
\leq& C \varepsilon_{1k}^{\f{\alpha^{2}-1}{\alpha}} \theta^{\f{6-4\alpha}{\alpha-1}}+
C\theta^{\f{3-\alpha}{\alpha-1}},
\ea$$
that is
$$\B(\theta^{\f{6-4\alpha}{\alpha-1}}\iint_{Q (\theta)}
| \pi_{k} |^{\f{\alpha+1}{\alpha}}dxdt\B)^{\f{\alpha}{\alpha+1}}\leq C  \varepsilon_{1k}^{ \alpha-1} \theta^{\f{\alpha(6-4\alpha)}{\alpha^{2}-1}}+
C\theta^{\f{\alpha(3-\alpha)}{\alpha^{2}-1}}.$$
Substituting this into  \eqref{mdns4.2},  we observe that
\be\label{mdns4.7}\f12<\B (\theta^{\f{6-4\alpha}{\alpha-1}}\iint_{ Q(\theta) } |v_{k}|^{\alpha+1}dxdt\B)^{\f{1}{\alpha+1}}
+C  \varepsilon_{1k}^{ \alpha-1} \theta^{\f{\alpha(6-4\alpha)}{\alpha^{2}-1}}+
C\theta^{\f{\alpha(3-\alpha)}{\alpha^{2}-1}} .
\ee
It follows from the H\"older inequality and \eqref{mdns4.6} that
$$\ba
&\B|\iint_{Q(\f78)} {v_{k}}_{t} \phi  dxdt\B|\\=&\B|-\iint_{Q(1)}\nabla v_{k} \nabla\phi+\varepsilon_{1k}^{\alpha-1}
\sum_{i=1}^{3}(v_{i})_{k}^{\alpha-1}v_{k}\cdot\nabla\phi_i  +\pi_{k} \mathrm{div}\phi \B|\\
 \leq&(\|\nabla v_{k}\|_{L^{2}(Q(\f78))}+\| |v_k|^{\alpha} \|_{\f{\alpha+1}{\alpha}(Q(\f78))}
 +\|\pi_{k}\|_{\f{\alpha+1}{\alpha}(Q(\f78))})
 \|\nabla\phi\|_{L^{\alpha+1}(Q(\f78))} \\
\leq& \|\ \phi \|_{L^{\alpha+1}(W^{1,\alpha+1}(B(\f78)))}.
\ea$$
 Hence, we find that
 $$\|{v_{k}}_{t}\|_{ L^{\f{\alpha+1}{\alpha}} ((W^{1,\alpha+1})^{\ast} )}\leq C.$$
This together with  $\|  v_{k}\|_{L^{2}(H^{1})}\leq C$   and the classical Aubin--Lions   lemma, we have
$$v_{k}\rightarrow v ~~\text{in}~~ L^{2}(Q(\f78)).$$
 Notice that \eqref{mdns4.6} implies that $\|  v_{k}\|_{L^{\f{10}{3}}( Q(\f78))}\leq C$, we obtain
 $$v_{k}\rightarrow v ~~\text{in} ~~ L^{\alpha+1}(Q(\f78)), \alpha<\f73.$$
 Passing the limit in \eqref{mdns4.7}, we know that
$$\f12< C
\theta^{ \f{1 }{\alpha-1}}+C \varepsilon_{k}^{ \alpha-1}\theta^{\f{\alpha(6-4\alpha)}{\alpha^{2}-1}}+
C\theta^{\f{\alpha(3-\alpha)}{\alpha^{2}-1}}.
$$
First choosing $\theta$ sufficiently small and then taking $k$ sufficiently large, we get a contradiction.
\end{proof}

\begin{proof}[ Proof of Theorem \ref{the1.1ns} ]
With the aid of Lemma 4.1, arguing as the
same manner as in \eqref{3.25}, we have,
for any $ (x,t)\in Q(x,t;\rho) $ with $\rho\leq\f{1}{2}$, there exist a constant
$0<\beta_{3}<1$ such that
$$\ba
&\B(\rho^{\f{6-4\alpha}{\alpha-1}}\iint_{Q(\rho)} |u|^{\alpha+1}dxdt\B)^{\f{1}{\alpha+1}}
+\B(\rho^{\f{6-4\alpha}{\alpha-1}}\iint_{Q (\rho)}
|\Pi |^{\f{\alpha+1}{\alpha}}dxdt \B)^{\f{\alpha}{\alpha+1}}
\leq C\rho^{\f{\alpha\beta_{3}}{\alpha+1}}\wred{\varepsilon_1}.
\ea$$
This means that  $\Pi, |u|^{\alpha} \in \mathcal{M}^{\f{5(\alpha+1)}{\alpha(\f{\alpha+1}{\alpha-1}-\beta_{3})},
\f{\alpha+1}{\alpha}}(Q(\f{1}{2}))$ and  breaks the scaling of the equations.  Then one can apply   the fractional integration theorem \cite{[OLeary],[Kukavica0]} ( Riesz potential estimate \cite{[HLW],[GWZ],[HW],[Wang]} ) involving
parabolic Morrey spaces to get that,   for any $  q<\infty$, $ u \in  L^{q}(Q(\f{1}{4}))$.
The desired  boundness of $|v|$
can be improved by bootstrapping arguments (see \cite{[HLW],[DHW]}).
 The rigorous proof can be found in these works.
 Here we just outline the proof of $ u \in   L^{q}(Q(\f{1}{4})).$
The fractional integration theorem due to \cite{[OLeary],[Kukavica0]}  reads
$$\B\|\iint_{\mathbb{R}^{3}} \f{f}{(|x-\xi|+\sqrt{|t-\tau|})^{4}}d\xi d\tau\B\|_{L^{p}(\Omega\times I)}\leq
\|f\|_{L^{m}(\Omega\times I)}^{\f{m}{p}}\|f\|_{\mathcal{M}^{\f{5q}{5-\lambda},q}(\Omega\times I)}^{1-\f{m}{p} },
$$
with
$$\f{1}{p}>\f{q}{m}(\f1q-\f{1}{5-\lambda}). $$
Roughly speaking, notice that
$$|u| \approx \iint \f{|u|^{\alpha}+|\Pi|}{(|x-\xi|+\sqrt{|t-\tau|})^{4}}d\xi d\tau,$$
then, applying the fractional integration theorem mentioned above, we see that
$u\in L^{p}$ with $$   p<\f{1}{\f{\alpha}{m}(1-\f{q}{\alpha(5-\lambda)})}=\f{m}{\alpha-\f{q}{5-\lambda}}.$$
We start with
$q\equiv\alpha+1, m_{0}=\alpha+1, $ to obtain
$$
p_{1}<\f{\alpha+1}{\f{1+\alpha-\beta_{3}(\alpha-1)\alpha}
{1+\alpha-\beta_{3}(\alpha-1) }}. $$
Then, we set
$m_{1}=p_{1}$ to derive that
$$\ba
&p_{2}<\f{m_{1}}{\f{1+\alpha-\beta_{3}(\alpha-1)\alpha}
{1+\alpha-\beta_{3}(\alpha-1) }}.\ea$$
Likewise,
$$\ba
&p_{k}<\f{m_{k}}{\f{1+\alpha-\beta_{3}(\alpha-1)\alpha}
{1+\alpha-\beta_{3}(\alpha-1) }}=\f{p_{k-1}}{\f{1+\alpha-\beta_{3}(\alpha-1)\alpha}
{1+\alpha-\beta_{3}(\alpha-1) }}.
\ea$$
We see that
$$
\alpha+1<p_{1}<p_{2}<\cdots<p_{k}<\cdots <\infty.
$$
Therefore,
for any $\alpha+1\leq q<\infty$, we get $u\in L^{p}$.
The proof of this theorem is completed.
\end{proof}
\subsection{Proof of Theorem \ref{the1.2ns}}
Next, we turn our attentions to the proof of Theorem \ref{the1.2ns}.  From the  hypothesis of this theorem, we know that there exist a constant
  $r_{0}$ such that  $E_{\ast}(u;\,r)\leq\varepsilon_{2}$   for $r\leq r_{0}$.
It seems that, unlike the Navier-Stokes equations,  the pressure equations of  the  modified  Navier-Stokes equations
  $$\Delta\Pi=-\phi \partial_{i}\partial_{j} ((u_{j}-C) (u_{i}^{\alpha-1}-C )),$$
  is invalid.
  To this end, by means of the local energy inequality and the  hypothesis, we first to show  the smallness of  $ E_{\alpha+1}(u;\,r)$  for $0<r \leq r_1<r_{0}$. Then, using the local energy inequality once again, we can complete the proof by Theorem \ref{the1.1ns}.
\begin{proof}[Proof of Theorem \ref{the1.2ns}]By means of the usual test function and H\"older's inequality, we derive from the local energy inequality \eqref{locnsns}  that
\be\label{eq4.6}
 \begin{aligned}
 E(u;\,r) +E_{\ast}(u;\,r)
 \leq& C\Big[E_{2}(u;\,2r) +E_{\alpha+1}(u;\,2r)
+P_{\f{\alpha+1}{\alpha}}^{\f{\alpha}{\alpha+1}}(2r)
E_{\alpha+1}^{\f{1}{\alpha+1}}(u;\,2r) \Big].
 \end{aligned}
\ee
Multiplying (\ref{eq4.6}) by $\varepsilon^{3/16}$ and using the  H\"older and  Young inequality, we see that
\be\label{nseq4.7}
 \begin{aligned}
&\varepsilon^{3/16}E(u;\,r) +\varepsilon^{3/16}E_{\ast}(u;\,r)\\
\leq& C\varepsilon^{3/16}
[E_{2}(u;\,2r) +E_{\alpha+1}(u;\,2r)
+P_{\f{\alpha+1}{\alpha}}^{\f{\alpha}{\alpha+1}}(u;\,2r)
E_{\alpha+1}^{\f{1}{\alpha+1}}(u;\,2r) ]\\\leq& C
({E_{\alpha+1}(u;\,2r)})^{\f{2}{\alpha+1}}\varepsilon^{3/16}+
C\varepsilon^{3/16}
[ E_{\alpha+1}(u;\,2r)
+P_{\f{\alpha+1}{\alpha}}^{\f{\alpha}{\alpha+1}}(u;\,2r)
E_{\alpha+1}^{\f{1}{\alpha+1}}(u;\,2r) ]\\
\leq& C \varepsilon^{\f{3(\alpha+1)}{16(\alpha-1)}} +CE_{\alpha+1}(u;\,2r) +
\varepsilon^{\f{3(\alpha+1)}{16\alpha}}
P_{\f{\alpha+1}{\alpha}}(2r).
 \end{aligned}
\ee
From Lemma \ref{ineqns} and Lemma \ref{presurens} with $\mu=2r$, we infer that
\be\ba\label{ns4.7}
&E_{\alpha+1}(u;\,2r)  \leq  C \left(\dfrac{\rho}{r}\right)^{\f{4\alpha-6}{\alpha-1}}
E^{\f{5-\alpha }{4 }}(u;\,\rho)E^{\f{3\alpha-3}{4}} _{\ast}(u;\,\rho)
+C\left(\dfrac{r}{\rho}\right)^{\f{3-\alpha}{\alpha-1}}E_{\alpha+1}(u;\,\rho), \\
&  \varepsilon^{1/4}P_{^{\f{\alpha+1}{\alpha}}}(2 r) \leq C\varepsilon^{1/4}\left(\dfrac{\rho}{r}\right)^{\f{ 4\alpha-6}{\alpha-1}}
 E_{^{ {\alpha+1} }}(u;\,\rho)
+C
\left(\f{r}{\rho}\right)^{\f{3\alpha-1}{\alpha(\alpha-1)}}
\varepsilon^{1/4}P_{ {\f{\alpha+1}{\alpha}}}(\rho).
\ea  \ee
Before going further, we write
\be\label{eq4.8}
G(r):=
E_{\alpha+1}(u;\,r) +\varepsilon^{3/16} E(u;\,r) +\varepsilon^{3/16}E_{\ast}(u;\,r)
+\varepsilon^{1/4}P_{\f{\alpha+1}{\alpha}}(r).
\ee
Plugging     \eqref{nseq4.7}  and \eqref{ns4.7}    into (\ref{eq4.8}), we arrive at
\be\label{eq4.9}\ba
G( r)\leq&C \varepsilon^{\f{3(\alpha+1)}{16(\alpha-1)}} +CE_{\alpha+1}(u;\,2r) +C
\varepsilon^{\f{3(\alpha+1)}{16\alpha}}
P_{\f{\alpha+1}{\alpha}}(2r) +C\varepsilon^{1/4}P_{\f{\alpha+1}{\alpha}}(r)\\
{\leq}& C \varepsilon^{\f{3(\alpha+1)}{16(\alpha-1)}}+C \left(\dfrac{\rho}{r}\right)^{\f{4\alpha-6}{\alpha-1}}
E^{\f{5-\alpha }{4 }}(u;\,\rho)E^{\f{3\alpha-3}{4}} _{\ast}(u;\,\rho)
    +C\left(\dfrac{r}{\rho}\right)^{\f{3-\alpha}{\alpha-1}}
    E_{\alpha+1}(u;\,\rho) \\ &+
 C\left(\dfrac{\rho}{r}\right)^{\f{ 4\alpha-6}{\alpha-1}}
 \varepsilon^{\f{1}{4}}E_{^{ {\alpha+1} }}(u;\,\rho)
   +C
\left(\f{r}{\rho}\right)^{\f{3\alpha-1}{\alpha(\alpha-1)}}
\varepsilon^{1/4}P_{ {\f{\alpha+1}{\alpha}}}(\rho).
 \ea
\ee

 Since $\f{5-\alpha }{4 }<1$, one can apply the   iteration method as \cite{[CKN]} to show that  there exist a positive constant $r_{1}$, such that
 $ E_{\alpha+1}(u;\,r) \leq\varepsilon$ for any $r\leq r_{1}.$

Using  the local energy inequality (\ref{eq4.6}) and the decay estimates \eqref{ns4.7}, we  get
\be\ba\label{eq4.10}
 &E(u;\, r) +E_{\ast}(u;\, r)+ P_{\f{\alpha+1}{\alpha}}(r)\\
\leq& C (E_{\alpha+1  }(u;\,2r))^{\f{2}{\alpha+1}} +CE_{\alpha+1}(u;\,2r)
+ CP_{\f{\alpha+1}{\alpha}}(2r)\\
\leq& C [\left(\dfrac{\rho}{r}\right)^{\f{4\alpha-6}{\alpha-1}}
E_{\alpha+1}(u;\,\rho)]^{\f{2}{\alpha+1}}+
C\left(\dfrac{\rho}{r}\right)^{\f{4\alpha-6}{\alpha-1}}E_{\alpha+1}(u;\,\rho)\\&+ C\left(\dfrac{\rho}{r}\right)^{\f{ 4\alpha-6}{\alpha-1}}
 E_{^{ {\alpha+1} }}(u;\,\rho)
   +C
\left(\f{r}{\rho}\right)^{\f{3\alpha-1}{\alpha(\alpha-1)}}
 P_{ {\f{\alpha+1}{\alpha}}}(\rho) .
\ea\ee
Now, we can apply the iteration argument as above once again to show that, there exists a positive constant  $r_{2}\leq r_{1}$ such that $ E(u;\,r) +E_{\ast}(u;\,r)+ P_{\f{\alpha+1}{\alpha}}(r)\leq \varepsilon_{0}$ for any $ r\leq r_{2}$. Finally, we can invoke  Theorem \ref{the1.1ns}  to  finish the proof of   Theorem \ref{the1.2ns}.
\end{proof}

\subsection{Proof of Theorem \ref{the1.3ns}}
The key estimate for the proof of Theorem \ref{the1.3ns} is to show
\be\label{keythe1.3ns}
\f{d}{dt}\int_{{\Omega}}|\nabla u|^{2}dx\leq   C\|u \|^{\f{2q(\alpha-1)}{q-3\alpha+3}}_{L^{q }({\Omega})}\|  \nabla u\|^{2}_{L^{ 2} ({\Omega} )}, q> 3\alpha-3.
\ee
Taking $q=6$ in the latter inequality and using the Sobolev inequality, we get
$$
\f{d}{dt}\int_{{\Omega}}|\nabla u|^{2}dx\leq   C \|  \nabla u\|^{\f{2+2\alpha}{3-\alpha}} _{L^{ 2} ({\Omega} )},
$$
which leads to the local well-posedness of initial data in $H^{1}(\Omega)$. In addition, the Gronwall inequality and estimate \eqref{keythe1.3ns} mean the rest result of this theorem.
\begin{proof}[Proof of  estimate  \eqref{keythe1.3ns}]
Multiplying the modified Navier-Stokes system (\ref{mns}) by $\partial_{k}\partial_{k}u$, integrating over $\Omega$,  using  div\,$u$=0, and integrating by
parts, we have
\be\ba\label{2.18}
\f12\f{d}{dt}\int_{\Omega}|\nabla u|^{2}dx+\int_{\Omega}|\nabla^{2}u|dx&=
\sum^{3}_{i=1}\int_{\Omega}u\cdot\nabla u_{i}^{\alpha-1} \partial_{k}\partial_{k}{u_i dx}.
\ea\ee
{From H\"older's inequality, we have}
 $$
 {I=}\sum^{3}_{i=1}\int_{\Omega}u\cdot\nabla u_{i}^{\alpha-1} \partial_{k}\partial_{k}{u_i dx}
 \leq C\||u|^{\alpha-1}\|_{L^{\f{q}{\alpha-1}}(\Omega)}
 \| \nabla u \|_{L^{\f{2q}{q-2\alpha+2}}(\Omega)}\| \nabla^{2} u \|_{L^{2}(\Omega)}.
 $$
It follows from  the interpolation  inequality  and  Sobolev's embedding     that
\be\label{sampleinterplation}
 \| \nabla u \|_{L^{\f{2q}{q-2\alpha+2}}(\Omega)}\leq
\| \nabla  u \|^{ \f{3\alpha-3}{q}}_{L^{6 }(\Omega)}\|  \nabla u\|^{ \f{q+3-3\alpha}{q}}_{L^{ 2} (\Omega)}\leq C\| \nabla^{2} u \|^{ \f{3\alpha-3}{q}}_{L^{2}(\Omega)}\|  \nabla u\|^{ \f{q+3-3\alpha}{q}}_{L^{ 2} (\Omega)}.
\ee
We derive from {Young's inequality}   and the latter inequality that
\be\ba\label{i11}
I&\leq {C\|u_{}\|}^{\alpha-1}_{L^{q }(\Omega)}\| \| \nabla^{2} u \|^{1+\f{3\alpha-3}q}_{L^{2}(\Omega)}\|  \nabla u\|^{ \f{q+3-3\alpha}{q}}_{L^{ 2} ( \Omega )}\\
&\leq C\|u \|^{\f{2q(\alpha-1)}{q-3\alpha+3}}_{L^{q }(\Omega)}\|  \nabla u\|^{2}_{L^{ 2}  (\Omega)  }+\f{1}{32}\| \nabla^{2} u \|^{2}_{L^{2}(\Omega)},
\ea\ee
{where the fact} that  $q> 3\alpha-3$ were used.

Collecting above estimates and  absorbing the terms containing $\| \nabla^{2} u \|^{2}_{L^{2}(\Omega)} $   by the left-hand
side in \eqref{2.18}, we deduce that
\be\label{3.14}
\f{d}{dt}\int_{\Omega}|\nabla u|^{2}dx\leq   C\|u \|^{\f{2q(\alpha-1)}{q-3\alpha+3}}_{L^{q }(\Omega)}\|  \nabla u\|^{2}_{L^{ 2} (\Omega)}.
\ee
The desired estimate is derived.
 \end{proof}

\section{ Conclusion}\label{section5}
Inspired by the work of Oz\'anski-Robinson \cite{[OR]} and  Stein-Winkler \cite{[SW]},
we follow the path of  \cite{[OR]}
to study the
    partial regularity of suitable weak solution  of  a surface growth  equation \eqref{gsg}  with the general nonlinear term and no-zero force. The   precise relationship between the Hausdorff dimension of
potential singular point set $\mathcal{S}$ and the parameter $\alpha$ in this equation is presented in Corollary   \ref{coro}. The  {index range}  of Corollary   \ref{coro} {is restricted to} the torus {due to} the interpolation inequality \eqref{inter1}.

{Now we give some remarks:}

1.  Partial regularity of  the 3D stochastic Navier-Stokes equations was obtained by Flandoli by Romito in \cite{[FR]}.
Can one obtain      the     partial regularity of the original equation  \eqref{ckpz} or \eqref{gsg}  with the noise term?

2.  It is an interesting question to prove the existence of suitable weak solution and related partial regularity theory for the critical case $\alpha=7/3$ in \eqref{gsg} as well as the 3D  modified Navier-Stokes system \eqref{mns}.  A probable approach involving existence is to use the strategy introduced by Wu in \cite{[Wu]}.

3.  Is there any other approach to study the     partial regularity of equation \eqref{gsg}  besides blow
up {analysis?}

4.   Weather the same conclusion   of Corollary   \ref{coro}  is valid  on bounded
domains and the whole {space?} The Corollary  heavily relies on the interpolation inequality on  periodic   boundary conditions.

5.  What is optimal hypothesis of force to obtain the partial regularity of equation in  \eqref{ckpz} or \eqref{gsg}. Similar research for the 3D Navier-Stokes can be found in \cite{[Kukavica]}.

\section*{Acknowledgements}
The authors would like to express their sincere gratitude to Dr. Xiaoxin Zheng at Beihang University
for the discussion of  the modified Navier-Stokes equations \eqref{mns}.
 Wang was partially supported by  the National Natural
 Science Foundation of China under grant (No. 11971446 and  No.  11601492).
Wu was partially supported by the National Natural Science Foundation of China under grant No. 11771423.
Zhou was partially supported by the National Natural Science Foundation of China under grant No. 12071113.

\end{document}